\def\keywords{\xdef\@thefnmark{}\@footnotetext}
\renewcommand{\S}{\mathcal{S}}
\newcommand{\T}{\mathcal{T}}
\DeclareMathOperator{\cyc}{cyc}
\DeclareMathOperator{\lrmax}{lrmax}
\DeclareMathOperator{\red}{red}
\DeclareMathOperator{\des}{des}
\newtheorem{theorem}{Theorem}[section]
\newtheorem{proposition}[theorem]{Proposition}
\newtheorem{corollary}[theorem]{Corollary}
\newtheorem{lemma}[theorem]{Lemma}
\newtheorem{conjecture}[theorem]{Conjecture}
\theoremstyle{definition}
\newtheorem{example}[theorem]{Example}
\newtheorem{remark}[theorem]{Remark}
\newtheorem{question}[theorem]{Question}
\title{Pattern-avoiding shallow permutations}
\author[1]{Kassie Archer}
\author[1]{Aaron Geary}
\author[1]{Robert P. Laudone}
\affil[1]{{\small Department of Mathematics, United States Naval Academy, Annapolis, MD, 21402}}
\affil[ ]{{\small Email: \{karcher, geary, laudone\}@usna.edu }}
\date{}
\begin{document}

\maketitle

\begin{abstract}
Shallow permutations were defined in 1977 to be those that satisfy the lower bound of the Diaconis-Graham inequality. Recently, there has been renewed interest in these permutations. In particular, Berman and Tenner showed they satisfy certain pattern avoidance conditions in their cycle form and Woo showed they are exactly those whose cycle diagrams are unlinked. 
Shallow permutations that avoid 321 have appeared in many contexts; they are those permutations for which depth equals the reflection length, they have unimodal cycles, and they have been called Boolean permutations. 
Motivated by this interest in 321-avoiding shallow permutations, we investigate $\sigma$-avoiding shallow permutations for all $\sigma \in \S_3$. To do this, we develop more general structural results about shallow permutations, and apply them to enumerate shallow permutations avoiding any pattern of length 3. 

\end{abstract}

\keywords{{\bf Keywords:} permutations; avoidance; shallow; enumeration.}

\section{Introduction and background}


Let $\S_n$ denote the set of permutations on $[n]=\{1,2,\ldots,n\}$ and we write these permutations in their one-line notation as $\pi = \pi_1\pi_2\ldots\pi_n$ where $\pi_i:=\pi(i)$. There are multiple measures of disorder or disarray of a permutation $\pi \in \S_n$. Three of these, namely the total displacement $D(\pi)$, the length $I(\pi)$, and the reflection length $T(\pi)$, are connected by the Diaconis-Graham inequalities \cite{DG77}:
\[
I(\pi) + T(\pi) \leq D(\pi) \leq 2 \, I(\pi).
\]
Here, the total displacement $D(\pi)$, also called Spearman's measure of disarray, is given by 
\[
D(\pi) = \sum_{i = 1}^n |\pi_i - i|.
\]
The length $I(\pi)$ is equal to the minimal number of simple transpositions required to produce $\pi$. It is also called the inversion number and is given by
\[
I(\pi) = \sum_{i=1}^n |\{i < j \; | \; \pi_i > \pi_j\}|.
\]
The reflection length $T(\pi)$ is the minimal number of transpositions required to produce $\pi$ from the identity permutation,  which was shown by Cayley in 1849 to be
\[
T(\pi) = n - \cyc(\pi),
\]
where $\cyc(\pi)$ denotes the number of cycles in the disjoint cycle decomposition of $\pi.$ 

It is well-known that the upper Diaconis-Graham inequality is achieved, i.e., $D(\pi)=2I(\pi)$, when $\pi$ avoids the pattern 321, meaning there is no set of indices $i<j<k$ with $\pi_i>\pi_j>\pi_k$. A permutation is called {\em shallow} when it satisfies the lower inequality, i.e., when $I(\pi) + T(\pi) = D(\pi)$. We note that shallow permutations have recently been investigated from various perspectives: In \cite{BT22}, the authors use pattern functions to characterize the cycle form of these permutations in terms of pattern-avoidance, and in \cite{W22}, the author proves that shallow permutations are exactly those permutations whose cycle diagram is equivalent to the unknot when viewed as a knot diagram. 

Permutations which satisfy both the upper and lower bound of the Diaconis-Graham inequalities have been well-studied in their own right. These permutations are exactly those that are shallow 321-avoiding permutations; these have been called Boolean permutations \cite{RT1,RT2,T07}, unimodal permutations \cite{E87} (because of their unimodal cycle form), and are characterized as avoiding both 321 and 3412 \cite{PT15}.
 It was stated, without proof, in \cite{DG77}, that these permutations are enumerated by $F_{2n-1}$, the $(2n-1)$-st Fibonacci number. A proof of this fact does appear in other places, including \cite{PT15}, and we provide an independent proof of this fact in this paper, directly using shallowness.
 
Motivated by this interesting answer regarding 321-avoiding shallow permutations, in this paper we investigate shallow permutations which avoid $\sigma$ for $\sigma \in \S_3$. In Section \ref{sec:shallow}, we describe certain properties of general shallow permutations which we use in follow-on sections. In particular, we show how to build shallow permutations from smaller ones, and we prove that all shallow permutations necessarily avoid certain mesh patterns. In Sections \ref{sec:132}, \ref{sec:231}, \ref{sec:123}, and \ref{sec:321} we enumerate $\sigma$-avoiding shallow permutations for $\sigma \in \S_3$. Additionally, we enumerate $\sigma$-avoiding shallow permutations by number of descents and by three symmetry properties. In particular, we enumerate those shallow $\sigma$-avoiding permutations that are fixed under inverse, reverse-complement, and reverse-complement-inverse. The sections are ordered by the complexity of the proofs involved, with the exception of $\sigma=321$, which we do last since these have been investigated in previous papers. We conclude the paper with open questions and directions for future study. 

\renewcommand{\arraystretch}{2.2}
\begin{table}[ht]
    \centering
    \begin{tabular}{c|c|c}
       $\sigma$  & number of shallow $\sigma$-avoiding permutations & Theorem  \\ \hline\hline
        $132$ & \multirow{ 2}{*}{$t_n(\sigma)=F_{2n-1}$}& \multirow{2}{*}{Theorem~\ref{thm: 132}}\\ \cline{1-1}
       $213$ & & \\ \hline
       $231$ & \multirow{ 2}{*}{g.f. $T_{\sigma}(x) = \dfrac{1-3x+2x^2-x^3-x^4-x^5}{1-4x+4x^2-2x^3-x^4-x^5}$}& \multirow{2}{*}{Theorem~\ref{thm:231}} \\ \cline{1-1}
       $312$ & & \\ \hline
       $123$  & g.f. $T_{\sigma}(x) =\dfrac{1-3x+11x^3-13x^4+7x^5+6x^6+3x^7}{(1-x)^4 (1 - 4 x^2 + x^4)}$ & Theorem~\ref{thm: 123}\\ \hline
       $321$ & $t_n(\sigma)=F_{2n-1}$ & Theorem~\ref{thm: 321} \\
     \end{tabular}
    \caption{In this table, $t_n(\sigma)$ denotes the number of shallow permutations avoiding a given pattern $\sigma$, and $T_{\sigma}(x) = \sum_{n\geq 0} t_n(\sigma)x^n$ is the corresponding generating function.}
    \label{tab:my_label}
\end{table}

\section{Structure of shallow permutations}\label{sec:shallow}

Let $\mathcal{T}_n$ denote the permutations $\pi \in \S_n$ that are shallow and let $t_n = |\mathcal{T}_n|$. We will often make use of the following recursive formulation of shallow permutations that is due to Hadjicostas and Monico \cite{HM13}.

\begin{theorem}\cite[Theorem 4.1]{HM13} \label{thm: shallowRecursive}
Suppose $\pi \in \mathcal{S}_n$ and for $n\geq 2$, define 
\[
\pi^R = \begin{cases}
    \pi_1\pi_2\ldots\pi_{n-1} & \pi_n=n \\
    \pi_1\pi_2\ldots\pi_{j-1}\pi_n\pi_{j+1}\ldots \pi_{n-1} & \pi_j=n \text{ with } j<n
    \end{cases}
\]
Then $\pi =1 \in \S_1$ is shallow, and when $n\geq 2,$
\begin{itemize}
    \item if $\pi_n = n$, then $\pi$ is shallow exactly when $\pi^R$ is, and
    \item if $\pi_j=n$ with $j<n$,  then $\pi$ is shallow exactly when $\pi^R$ is shallow and $\pi^R_j = \pi_n$ is a left-to-right maximum or right-to-left minimum in $\pi^R$.
\end{itemize}  
\end{theorem}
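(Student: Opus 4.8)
The plan is to track how each of the three statistics $D$, $I$, and $T$ changes when one passes from $\pi\in\S_n$ to $\pi^R\in\S_{n-1}$, and then to play the result off against the Diaconis--Graham inequality $I(\sigma)+T(\sigma)\le D(\sigma)$, which holds for \emph{every} $\sigma$. Write $\Delta(\sigma)=D(\sigma)-I(\sigma)-T(\sigma)$, so that $\Delta(\sigma)\ge 0$ always and $\sigma$ is shallow exactly when $\Delta(\sigma)=0$. The aim is to show that $\Delta(\pi)-\Delta(\pi^R)$ is a nonnegative integer that vanishes precisely under the condition in the theorem; since $\Delta(\pi)$ and $\Delta(\pi^R)$ are both nonnegative, this at once yields both stated equivalences.

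The base case $\pi=1$ is immediate, and the case $\pi_n=n$ is easy: deleting the terminal fixed point removes the zero term $|n-n|$ from $D$, changes $I$ not at all (a maximal last entry lies in no inversion), and leaves $T=n-\cyc$ unchanged (both $n$ and $\cyc$ drop by $1$). Hence $\Delta(\pi)=\Delta(\pi^R)$, which gives the first bullet.

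For the main case, suppose $\pi_j=n$ with $j<n$ and set $a=\pi_n$. Introduce the intermediate permutation $\widehat\pi\in\S_n$ obtained by appending $n$ to $\pi^R$; then $\widehat\pi$ has value $a$ in position $j$ and the fixed point $n$ in position $n$, and $\pi$ is exactly $\widehat\pi$ with the entries in positions $j$ and $n$ transposed. Appending the maximum to $\pi^R$ affects none of $D$, $I$, $T$, so $\Delta(\widehat\pi)=\Delta(\pi^R)$, and it remains to analyse the single transposition $\widehat\pi\mapsto\pi$. With $L=|\{\,i<j:\pi_i>a\,\}|$, $R_<=|\{\,i:j<i<n,\ \pi_i<a\,\}|$, and $R_>=|\{\,i:j<i<n,\ \pi_i>a\,\}|$ (so $R_<+R_>=n-1-j$), a direct computation gives $D(\pi)-D(\widehat\pi)=(n-j)+(n-a)-|a-j|=2\bigl(n-\max(a,j)\bigr)$; the inversion number increases by $1+2R_>$ (the swap creates the inversion in positions $j<n$ since $a<n$, plus twice the number of intervening entries exceeding $a$); and $T$ increases by $1$, because $n$ is a fixed point of $\widehat\pi$, so positions $j$ and $n$ lie in different cycles of $\widehat\pi$, and transposing them merges those cycles.

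Combining, $\Delta(\pi)-\Delta(\pi^R)=2\bigl(n-\max(a,j)\bigr)-(1+2R_>)-1=2\bigl(n-1-\max(a,j)-R_>\bigr)$. Counting which values at the positions other than $j$ and $n$ exceed $a$ gives $L+R_>=n-1-a$, so this difference is $2L$ when $a\ge j$ and, since $R_<+R_>=n-1-j$, equals $2R_<$ when $a\le j$; since $L-R_<=j-a$ in all cases, uniformly $\Delta(\pi)-\Delta(\pi^R)=2\min(L,R_<)\ge 0$. Therefore $\pi$ is shallow iff $\pi^R$ is shallow and $\min(L,R_<)=0$, i.e. $L=0$ or $R_<=0$ --- and $L=0$ says $\pi^R_j=a$ exceeds everything to its left in $\pi^R$ (a left-to-right maximum), while $R_<=0$ says $\pi^R_j=a$ is smaller than everything to its right in $\pi^R$ (a right-to-left minimum), which is the condition in the second bullet. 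I expect the only delicate step to be the inversion count $I(\pi)-I(\widehat\pi)=1+2R_>$; the displacement and cycle-count computations are short, and the Diaconis--Graham inequality supplies the nonnegativity that converts the identity into the desired equivalence.
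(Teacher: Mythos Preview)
Your argument is correct. The key identity $\Delta(\pi)=\Delta(\pi^R)+2\min(L,R_<)$ is established cleanly: the displacement change $2(n-\max(a,j))$, the inversion change $1+2R_>$, and the reflection-length change $+1$ are all verified by straightforward counting, and the bookkeeping identities $L+R_>=n-1-a$ and $R_<+R_>=n-1-j$ convert the result into the stated minimum. Since both $\Delta(\pi^R)$ and $2\min(L,R_<)$ are nonnegative by the Diaconis--Graham inequality, the equivalence drops out.

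As for comparison: the paper does not actually prove this statement. Theorem~\ref{thm: shallowRecursive} is quoted from Hadjicostas--Monico \cite[Theorem 4.1]{HM13} without proof, so there is no in-paper argument to compare against. Your proof is self-contained and elementary, deriving the exact defect formula $\Delta(\pi)-\Delta(\pi^R)=2\min(L,R_<)$ directly from the definitions; this is in fact a bit sharper than the bare equivalence, since it quantifies how far $\pi$ is from shallow in terms of $\pi^R$. If anything, including such a proof would make the present paper more self-contained.
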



Let us see an example. Suppose $\pi= 421635\in \T_6$, a shallow permutation of 6 elements. Notice $\pi_4=6$, and $\pi_6=5$; applying the $\pi^R$ map we see
\[
421\underline{\mathbf{6}}3\underline{\mathbf{5}} \xlongrightarrow{\pi^R} 421\underline{\mathbf{5}}3,
\]
and 42153 is $\in \T_5$. Notice that we can use the inverse of this map to construct new shallow permutations from old ones. Given any $\tau\in\T_{n-1}$ and any position $i$ for which $\tau_i$ is either a left-to-right maximum or a right-to-left minimum, we can construct a permutation $\pi$ for which $\pi^R=\tau$ by taking $\pi_j=\tau_j$ for $j\neq i$, $\pi_i=n$, and $\pi_n=\tau_i.$ Notice that we can get every shallow permutation on $[n]$ from the shallow permutations on $[n-1]$ in this way since every shallow permutation $\pi\in\T_n$ has an image $\tau=\pi^R$ in $\T_{n-1}.$


We will define a similar operation $\pi^L$ which acts on the left of $\pi$. To this end, let us define certain symmetries of permutations in general. We also give the names of permutations which are fixed under these symmetries, which will be relevant throughout this paper. 
\begin{itemize}
    \item We denote by $\pi^{-1}$ the algebraic inverse of $\pi.$ That is, $\pi^{-1}_j=i$ if and only if $\pi_i=j$. This corresponds to a reflection of the diagram of the permutation (given by points $(j,\pi_j)$ for each $j\in[n]$) about the main diagonal. Permutations which are their own inverse are called \emph{involutions}.
    \item We define $\pi^{rc}$ to be the reverse-complement of $\pi$, so that $\pi^{rc}_{n+1-i} = n+1-j$ if $\pi_i=j$. This corresponds to a $180^\circ$ rotation of the diagram of the permutation. Permutations satisfying $\pi=\pi^{rc}$ are called \emph{centrosymmetric}; see for example~\cite{E07,LO10}.
    \item Finally, let $\pi^{rci}:=(\pi^{rc})^{-1}$ be the reverse-complement-inverse of the permutation, corresponding to the reflection of the diagram of the permutation about the anti-diagonal. We will refer to permutations satisfying $\pi = \pi^{rci}$ as \emph{persymmetric}.
\end{itemize}
 In the following proposition we show that each of these three symmetries preserves shallowness.

\begin{proposition} \label{prop: invRc}
    If $\pi$ is shallow, then so are the permutations $\pi^{-1}$, $\pi^{rc},$ and $\pi^{rci}$.
\end{proposition}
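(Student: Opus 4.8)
The plan is to exploit the fact that \emph{shallowness is the single equation} $I(\pi)+T(\pi)=D(\pi)$, and to prove something stronger than the proposition asks: each of the three statistics $D$, $I$, and $T$ is \emph{individually} invariant under $\pi\mapsto\pi^{-1}$ and under $\pi\mapsto\pi^{rc}$. Since $\pi^{rci}=(\pi^{rc})^{-1}$, the claim for $\pi^{rci}$ is then immediate by composing the two cases. So there are really only six small verifications to carry out.

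First I would handle the inverse. For the displacement, substitute $j=\pi_i$ in $D(\pi^{-1})=\sum_{j}|\pi^{-1}_j-j|$ and use $\pi^{-1}_{\pi_i}=i$ to get $\sum_i|i-\pi_i|=D(\pi)$. For the length, $I(\pi^{-1})=I(\pi)$ is the standard fact that reversing a reduced word for $\pi$ yields a reduced word for $\pi^{-1}$ (equivalently, $(i,j)$ is an inversion of $\pi$ iff $(\pi_j,\pi_i)$ is an inversion of $\pi^{-1}$). For the reflection length, $\pi$ and $\pi^{-1}$ have the same disjoint cycle type, so $\cyc(\pi^{-1})=\cyc(\pi)$ and hence $T(\pi^{-1})=n-\cyc(\pi^{-1})=T(\pi)$ by Cayley's formula.

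Next I would handle the reverse-complement, writing $w_0\in\S_n$ for the longest element $w_0(i)=n+1-i$, so that $\pi^{rc}=w_0\,\pi\,w_0$ and $\pi^{rc}_k=n+1-\pi_{n+1-k}$. For the displacement, $|\pi^{rc}_k-k|=|(n+1-k)-\pi_{n+1-k}|$, and reindexing by $i=n+1-k$ gives $D(\pi^{rc})=\sum_i|i-\pi_i|=D(\pi)$. For the length, a $180^\circ$ rotation reverses both the order of positions and the order of values, so a pair of entries forms an inversion in $\pi^{rc}$ exactly when its image forms an inversion in $\pi$; thus $I(\pi^{rc})=I(\pi)$ (this is also the familiar identity $\ell(w_0 w w_0)=\ell(w)$). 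For the reflection length, conjugation by $w_0$ preserves cycle type, so $\cyc(\pi^{rc})=\cyc(\pi)$ and $T(\pi^{rc})=T(\pi)$. Combining, $D$, $I$, $T$ are each invariant under $\pi\mapsto\pi^{rci}$ as well.

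I do not expect a real obstacle here: every step is either a one-line substitution or a standard fact about Coxeter length and cycle type. The only place warranting a moment of care is the reindexing in the reverse-complement case for $D$, where one must correctly track that the $180^\circ$ rotation sends the point $(i,\pi_i)$ to $(n+1-i,\,n+1-\pi_i)$ and that the sign flip is absorbed by $|x|=|{-x}|$. Once $D$, $I$, and $T$ are pinned down as symmetry-invariant, the defining equality $I(\pi)+T(\pi)=D(\pi)$ transports verbatim to $\pi^{-1}$, $\pi^{rc}$, and $\pi^{rci}$, which is exactly the statement of the proposition.
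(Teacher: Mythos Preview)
Your proposal is correct and follows essentially the same approach as the paper: you verify that each of $D$, $I$, and $T$ is individually invariant under $\pi\mapsto\pi^{-1}$ and $\pi\mapsto\pi^{rc}$ (the latter via conjugation by the longest element $w_0$), and then deduce the $\pi^{rci}$ case by composition. The paper's proof is identical in structure, differing only in notation (it writes the conjugating permutation as $\sigma=n(n-1)\cdots 21$ rather than $w_0$).
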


\begin{proof}
    To see that $\pi^{-1} \in \mathcal{T}_n$ notice first that $D(\pi^{-1}) = \sum_{i=1}^n |\pi^{-1}_{\pi_i} - \pi_i| = \sum_{i=1}^n |i - \pi_i| = D(\pi)$. Next, $I(\pi^{-1}) = I(\pi)$ since this is also the length of $\pi^{-1}$ and if $\pi = s_{i_1}\cdots s_{i_k}$ then $\pi^{-1} = s_{i_k} \cdots s_{i_1}$. Similarly, $T(\pi^{-1})= T(\pi)$ since the cycle type of $\pi$ and $\pi^{-1}$ are the same; if $p = c_1 \cdots c_\ell$ then $p^{-1} = c_1^{-1} \cdots c_\ell^{-1}$. So since $I(\pi) + T(\pi) = D(\pi)$ the same is true for $\pi^{-1}$ meaning $\pi^{-1}$ is shallow.

    We similarly check $\pi^{rc}$. First, 
    \[
    D(\pi^{rc}) = \sum_{i=1}^n |\pi^{rc}_i - i| = \sum_{i=1}^n |(n-\pi_{n-i+1}+1)-i| = \sum_{i=1}^n |(n-i+1) - \pi_{n-i+1}| = D(\pi).
    \]
    Next, $I(\pi^{rc}) = I(\pi)$ since $\pi$ has an inversion in position $(i,j)$ if and only if $\pi^{rc}$ has one in position $(n-i+1,n-j+1)$. Indeed $\pi_i > \pi_j$ with $i < j$ if and only if $\pi^{rc}_{n-i+1} = n-\pi_i +1 < n-\pi_j + 1 = \pi^{rc}_{n-j+1}$ with $n-i+1 > n-j+1$. Finally $\pi^{rc}$ and $\pi$ have the same cycle type because $\pi^{rc} = \sigma^{-1} \pi \sigma$ where $\sigma = n(n-1)(n-2)\cdots21$. Two permutations have the same cycle type if and only if they are conjugate. Finally, $\pi^{rci}$ preserves shallowness because the reverse-complement and inverse do.
\end{proof}

We can now use Proposition~\ref{prop: invRc} to define a similar operation to $\pi^R$, which we denote by $\pi^L$, which also preserves shallowness and is defined as follows. Here, the reduction operator, $\red$, takes the elements of its input of length $\ell$ and returns a permutation in $\S_\ell$ in the same relative order. For example, $\red(48291)= 34251$ and $\red(9482) = 4231.$

\begin{theorem} \label{thm: shallowRecursiveL}
Suppose $\pi \in \mathcal{S}_n$ and for $n\geq 2$, define 
\[
\pi^L = \begin{cases}
    \red(\pi_2\ldots\pi_{n-1}) & \pi_1=1 \\
    \red(\pi_2\ldots\pi_{j-1}\pi_1\pi_{j+1}\ldots \pi_{n}) & \pi_j=1 \text{ with } j>1
    \end{cases}
\]
Then $\pi =1 \in \S_1$ is shallow, and when $n\geq 2,$
\begin{itemize}
    \item if $\pi_1 = 1$, then $\pi$ is shallow exactly when $\pi^L$ is, and
    \item if $\pi_j=1$ with $j>1$,  then $\pi$ is shallow exactly when $\pi^L$ is shallow and $\pi^L_j = \pi_n$ is a left-to-right maximum or right-to-left minimum in $\pi^L$.
\end{itemize}  
\end{theorem}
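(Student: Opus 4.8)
The plan is to derive Theorem~\ref{thm: shallowRecursiveL} from Theorem~\ref{thm: shallowRecursive} by conjugating the right‑hand operation $\pi\mapsto\pi^R$ through the reverse‑complement symmetry; Proposition~\ref{prop: invRc} then supplies all the content. The base case $\pi=1\in\S_1$ is immediate, so fix $n\ge 2$. The heart of the argument is the identity
\[
\pi^L \;=\; \bigl((\pi^{rc})^R\bigr)^{rc},
\]
which I would prove by a direct case check. Since $(\pi^{rc})_k = n+1-\pi_{n+1-k}$, the value $n$ sits in position $n+1-j$ of $\pi^{rc}$, where $j$ is the position of $1$ in $\pi$. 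Thus $(\pi^{rc})_n=n$ precisely when $\pi_1=1$, and $(\pi^{rc})_{j'}=n$ with $j'<n$ precisely when $\pi_j=1$ with $j=n+1-j'>1$; so the two cases in the definition of $\pi^R$ applied to $\pi^{rc}$ match the two cases defining $\pi^L$. In each case one writes $(\pi^{rc})^R$ out explicitly, applies the outer reverse‑complement (which now acts on $\S_{n-1}$, complementing positions and values with respect to $n$), and verifies entry by entry that the outcome is exactly $\pi^L$; the key point is that the $\red$ in the definition of $\pi^L$ is precisely absorbed by the complement‑by‑$n$ coming from the outer $rc$, since the entries being reduced are exactly $\{2,\dots,n\}$. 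I would double‑check this on the running example: $\pi=421635$ has $\pi^{rc}=241653$, then $(\pi^{rc})^R=24135$ and $\bigl((\pi^{rc})^R\bigr)^{rc}=13524=\pi^L$.

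Given the identity, the shallowness transfer is immediate. By Proposition~\ref{prop: invRc} (and the fact that $rc$ is an involution), a permutation of any size is shallow if and only if its reverse‑complement is; combined with the identity this gives that $\pi^L$ is shallow if and only if $(\pi^{rc})^R$ is, and $\pi$ is shallow if and only if $\pi^{rc}$ is. Now apply Theorem~\ref{thm: shallowRecursive} to $\pi^{rc}$. If $\pi_1=1$, so $(\pi^{rc})_n=n$, it yields $\pi$ shallow $\iff\pi^{rc}$ shallow $\iff(\pi^{rc})^R$ shallow $\iff\pi^L$ shallow, which is the first bullet. If $\pi_j=1$ with $j>1$, so $(\pi^{rc})_{j'}=n$ with $j'=n+1-j<n$, it yields that $\pi^{rc}$ is shallow if and only if $(\pi^{rc})^R$ is shallow and the entry $(\pi^{rc})^R_{j'}=(\pi^{rc})_n$ is a left‑to‑right maximum or a right‑to‑left minimum of $(\pi^{rc})^R$.

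Finally I would translate that last condition along the identity. The mechanism is that $rc$ swaps the notions of left‑to‑right maximum and right‑to‑left minimum: $\tau_i$ is a left‑to‑right maximum of $\tau$ if and only if $\tau^{rc}_{(n-1)+1-i}$ is a right‑to‑left minimum of $\tau^{rc}$, and symmetrically. Hence the disjunction ``left‑to‑right maximum or right‑to‑left minimum'' is invariant under $rc$, and transporting the condition on $(\pi^{rc})^R$ at position $j'$ through $\pi^L=\bigl((\pi^{rc})^R\bigr)^{rc}$ — under which the position $j'$ and the entry $(\pi^{rc})_n$ pass to the position and entry named in the second bullet — completes the proof.

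The only step demanding real care is the bookkeeping: in establishing the identity one must track exactly how $\red$ interacts with the complement map and which slot the entry $\pi_1$ lands in after the deletion and reduction, and the analogous tracking in the last step pins down precisely the position and entry of $\pi^L$ referenced by the second bullet. Beyond this there is no analytic content — once $\pi^L=\bigl((\pi^{rc})^R\bigr)^{rc}$ is in hand, Proposition~\ref{prop: invRc} and Theorem~\ref{thm: shallowRecursive} do all the work.
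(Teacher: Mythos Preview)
Your proposal is correct and follows exactly the same approach as the paper's proof, which is the single line ``This follows immediately from Theorem~\ref{thm: shallowRecursive} and Proposition~\ref{prop: invRc} since $\pi^L = [(\pi^{rc})^R]^{rc}$.'' You simply unpack that sentence: you verify the conjugation identity, invoke Proposition~\ref{prop: invRc} for the shallowness transfer, and observe that $rc$ swaps left-to-right maxima with right-to-left minima so the disjunctive condition survives.
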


\begin{proof}
    This follows immediately from Theorem \ref{thm: shallowRecursive} and Proposition \ref{prop: invRc} since $\pi^L = [(\pi^{rc})^R]^{rc}$. 
\end{proof}

Let us see an example. If $\pi= 421635\in \T_6$, we can apply $\pi^L$ to get
\[
\underline{\mathbf{4}}2\underline{\mathbf{1}}635 \xlongrightarrow{\pi^L} \text{red}(2\underline{\mathbf{4}}635)=1\underline{\mathbf{3}}524,
\]
and note that $13524\in \T_5$. Similar to our observation about the right operator above, this left operator can also be ``inverted'' to produce all shallow permutations on $[n]$ from those on $[n-1]. $

We denote by $\pi^{R^n}$ and $\pi^{L^n}$ the application the right and left operators from Theorems~\ref{thm: shallowRecursive} and \ref{thm: shallowRecursiveL}, respectively, applied $n$ times. For example, $\pi^{L^3} = ((\pi^L)^L)^L.$ On occasion, after applying the left operator to a permutation, we will work with the entries of the resulting permutation without reducing, for ease of notation. When we do this, we mark the entries. For example, we may write $(421635)^{L}$ as $2'4'6'3'5'$ with $i'=i-1$, instead of writing 13524. More generally if $\pi = \pi_1 \pi_2 \ldots \pi_{j-1} \pi_j \pi_{j+1} \ldots \pi_n$ and $\pi_j = 1$ we may refer to $\pi^L$ as $\pi^L=\pi_1' \pi_2' \ldots \pi_{j-1}' \pi_1' \pi_{j+1}' \ldots \pi_n'$ with $\pi_i'=\pi_i-1$ for each $i\neq j$ instead of writing $\pi^L=(\pi_1-1) (\pi_2-1) \ldots (\pi_{j-1}-1) (\pi_1-1) (\pi_{j+1}-1) \ldots (\pi_n-1)$. 

Next, let us make some general observations about shallowness. In the following lemma, we will see that shallowness is preserved under direct sums. Here, if $\pi \in \S_n$ and $\sigma\in\S_m$, then $\tau = \pi\oplus\sigma$ denotes the permutation in $\S_{m+n}$ with $\tau_i=\pi_i$ for all $i\in[n]$ and $\tau_{j+n}=\sigma_j$ for all $j\in[m]$. For example, we have that  $4312\oplus 53142 = 431297586.$

\begin{lemma}\label{lem: dir sum}
    If $\pi\in\T_n$ and $\sigma\in\T_m$, then $\pi\oplus\sigma\in\T_{n+m}.$
\end{lemma}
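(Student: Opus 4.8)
The plan is to prove that $D$, $I$, and $T$ are all additive under direct sums, so that if the Diaconis--Graham lower bound is tight for $\pi$ and for $\sigma$ it is tight for $\pi \oplus \sigma$. Write $\tau = \pi \oplus \sigma$ with $\pi \in \S_n$ and $\sigma \in \S_m$. For the total displacement: the first $n$ positions of $\tau$ contribute $\sum_{i=1}^n |\pi_i - i| = D(\pi)$, and for $j \in [m]$ the entry in position $j+n$ is $\sigma_j + n$, so the contribution is $\sum_{j=1}^m |(\sigma_j + n) - (j+n)| = \sum_{j=1}^m|\sigma_j - j| = D(\sigma)$; hence $D(\tau) = D(\pi) + D(\sigma)$.

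For the length, $I(\tau) = I(\pi) + I(\sigma)$: any inversion of $\tau$ must have both indices in $[n]$ or both in $\{n+1,\dots,n+m\}$, since every entry in the first block is smaller than every entry in the second block, so no inversion straddles the two blocks; the within-block inversions are exactly those of $\pi$ and of $\sigma$. For the reflection length, the disjoint cycle decomposition of $\tau$ is the union of that of $\pi$ (on $\{1,\dots,n\}$) and that of $\sigma$ shifted by $n$ (on $\{n+1,\dots,n+m\}$), because $\tau$ fixes the blocks setwise; thus $\cyc(\tau) = \cyc(\pi) + \cyc(\sigma)$ and $T(\tau) = (n+m) - \cyc(\tau) = (n - \cyc(\pi)) + (m - \cyc(\sigma)) = T(\pi) + T(\sigma)$.

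Combining, $I(\tau) + T(\tau) = (I(\pi)+T(\pi)) + (I(\sigma)+T(\sigma)) = D(\pi) + D(\sigma) = D(\tau)$, using shallowness of $\pi$ and $\sigma$ in the middle equality; hence $\tau$ is shallow. I do not expect a real obstacle here: all three statistics are visibly block-diagonal with respect to a direct sum, and the only thing to be careful about is the index shift in the second block, which cancels cleanly in each of the three computations. (If one prefers, the same conclusion follows by repeatedly invoking Theorem~\ref{thm: shallowRecursive}: removing the maximum of $\pi \oplus \sigma$ eventually peels off all of $\sigma$, with each $\pi^R$-step legal because the relevant entry is a left-to-right maximum inside the second block, reducing to shallowness of $\pi$ and of $\sigma$ separately; but the direct statistic computation is cleaner.)
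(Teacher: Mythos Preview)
Your proof is correct and follows essentially the same approach as the paper: both show that $D$, $I$, and $T$ are additive under direct sums and then conclude shallowness of $\pi\oplus\sigma$ from the definition. Your write-up is a bit more detailed (and includes the optional recursive argument as an aside), but the core idea is identical.
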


\begin{proof}
    First, notice that $D(\pi \oplus \sigma) = \sum_{i=1}^n |\pi_i - i| + \sum_{i=1}^{m} |(\sigma_{i}+n) - (i+n)|= D(\pi) +D(\sigma)$. Next, $I(\pi \oplus \sigma) = I(\pi) + I(\sigma)$ since there can be no additional inversions between the elements of $\pi$ and $\sigma$. Finally, $T(\pi \oplus \sigma) = T(\pi) + T(\sigma)$ since the number of cycles in $\pi \oplus \sigma$ is the sum of the number of cycles in $\pi$ plus the number of those in $\sigma$. It then follows from the original definition of a shallow permutation that if $\pi$ and $\sigma$ are shallow, so is $\pi \oplus \sigma$.
\end{proof}

In the next lemma, we see that we can always add $n$ to the beginning and 1 to the end of a shallow permutation of length $n-2$ and the result is a shallow permutation of length $n$, and we can similarly delete those elements from a shallow permutation of length $n$ to get a shallow permutation of length $n-2.$
\begin{lemma}\label{lem: n1}
    Let $\pi \in \S_{n-2}$ and $\tau = (n)(\pi_1+1)(\pi_2+1)\ldots(\pi_n+1)1$. Then $\pi\in \T_{n-2}$ if and only if $\tau\in\T_n$. 
\end{lemma}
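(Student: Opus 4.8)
The plan is to prove this directly from the two definitions of shallowness used in the excerpt, ideally reducing to the recursive characterization in Theorem~\ref{thm: shallowRecursive}, since that is the cleanest way to handle the position of the largest element. First I would set $\tau = (n)(\pi_1+1)\cdots(\pi_{n-2}+1)1 \in \S_n$ and observe that $\tau_1 = n$, so $j = 1$ in the notation of Theorem~\ref{thm: shallowRecursive}. Applying the right operator, $\tau^R$ is obtained by replacing the entry $n$ in position $1$ with $\tau_n = 1$ and deleting the last entry; that is, $\tau^R = 1(\pi_1+1)(\pi_2+1)\cdots(\pi_{n-2}+1)$, which is exactly $1 \oplus \pi$ (after the trivial observation that reduction is not needed here). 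By Theorem~\ref{thm: shallowRecursive}, $\tau \in \T_n$ if and only if $\tau^R \in \T_{n-1}$ and $\tau^R_1 = \tau_n = 1$ is a left-to-right maximum or right-to-left minimum of $\tau^R$. Since $1$ sits in the first position of $\tau^R$ and is the smallest value, it is automatically a right-to-left minimum (indeed also trivially a left-to-right maximum as the first entry), so the side condition is always satisfied. Hence $\tau \in \T_n \iff 1 \oplus \pi \in \T_{n-1}$.

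Next I would show $1 \oplus \pi \in \T_{n-1} \iff \pi \in \T_{n-2}$. One direction is immediate from Lemma~\ref{lem: dir sum}: since the single entry $1$ is shallow and $\pi \in \T_{n-2}$, the direct sum $1 \oplus \pi$ is shallow. For the converse I would run the left operator of Theorem~\ref{thm: shallowRecursiveL} on $\rho := 1 \oplus \pi$: here $\rho_1 = 1$, so $\rho^L = \red(\rho_2 \cdots \rho_{n-1}) = \red((\pi_1+1)\cdots(\pi_{n-2}+1)) = \pi$, and Theorem~\ref{thm: shallowRecursiveL} says that when $\rho_1 = 1$, $\rho$ is shallow exactly when $\rho^L$ is. This gives $1 \oplus \pi \in \T_{n-1} \iff \pi \in \T_{n-2}$, and chaining the two equivalences completes the proof. (Alternatively, one could avoid invoking Theorem~\ref{thm: shallowRecursiveL} and instead compute $D$, $I$, $T$ directly for $\tau$ versus $\pi$: one checks $D(\tau) = D(\pi) + 2(n-1)$, $I(\tau) = I(\pi) + 2(n-2)+1 = I(\pi) + 2n-3$, and $T(\tau) = T(\pi) + 1$ since prepending $n$ and appending $1$ merges nothing but creates exactly one new cycle structure contribution — more precisely $\tau$ in cycle form is obtained from $\pi$ by relabeling and inserting the transposition-like interaction of positions $1$ and $n$; verifying $I(\tau)+T(\tau)-D(\tau) = I(\pi)+T(\pi)-D(\pi)$ then gives the claim.)

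The main obstacle, should one take the direct route, is the bookkeeping for $T(\tau)$: one must carefully track how the cycle decomposition changes when $n$ is placed in position $1$ and $1$ in position $n$ while the middle block is shifted up by one. Writing $\pi$ as a product of disjoint cycles on $\{1,\dots,n-2\}$, the permutation $\tau$ maps $1 \mapsto n$, $n \mapsto 1$, and $i \mapsto \pi_{i-1}+1$ for $2 \le i \le n-1$; one needs to see this is the cycle structure of $\pi$ (relabeled by $i \mapsto i+1$) with the $2$-cycle involving the extreme positions fused appropriately, yielding $\cyc(\tau) = \cyc(\pi) + 1$ hence $T(\tau) = (n - \cyc(\tau)) = (n-2-\cyc(\pi)) + 1 = T(\pi)+1$. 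I expect the recursive-operator argument above to be the shorter and less error-prone path, so that is the one I would write up, relegating the displacement/length/reflection-length computation to a remark or omitting it entirely.
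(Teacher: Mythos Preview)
Your proposal is correct and follows essentially the same route as the paper: reduce $\tau$ to $1\oplus\pi$ via the right operator of Theorem~\ref{thm: shallowRecursive} (the side condition being automatic since the moved entry lands in position~$1$), and then pass between $1\oplus\pi$ and $\pi$ using Lemma~\ref{lem: dir sum}. Your write-up is in fact more complete than the paper's, which only spells out the forward direction $\pi\in\T_{n-2}\Rightarrow\tau\in\T_n$; your use of Theorem~\ref{thm: shallowRecursiveL} (or the direct $D,I,T$ computation you sketch) cleanly supplies the converse.
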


\begin{proof}
    Let $\pi\in \T_{n-2}$. Then by Lemma \ref{lem: dir sum}, $\sigma=1\oplus\pi \in \T_{n-1}$. By Theorem \ref{thm: shallowRecursive} and with $\sigma_1=1$ a left to right max, we apply the inverse recursion from Theorem \ref{thm: shallowRecursive} which replaces the 1 with $n$ and moves 1 to the end. Thus we arrive at a shallow permutation in the form of $\tau$ as defined in the statement of the Lemma. 
\end{proof}

By repeatedly applying this lemma, we can obtain the following corollary, which we will use frequently in this paper.

\begin{corollary}\label{cor:decreasing}
    The decreasing permutation $\delta_n=n(n-1)\ldots21$ is shallow.
\end{corollary}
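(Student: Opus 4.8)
The plan is to induct on $n$ in steps of two, using Lemma~\ref{lem: n1} as the engine. The key structural observation is that the decreasing permutation is self-similar in exactly the way Lemma~\ref{lem: n1} exploits: deleting the leading entry $n$ and the trailing entry $1$ from $\delta_n = n(n-1)\cdots 21$ leaves $(n-1)(n-2)\cdots 2$, which is precisely $\delta_{n-2}$ with every entry increased by one. In the notation of Lemma~\ref{lem: n1}, setting $\pi = \delta_{n-2}$ gives
\[
\tau = (n)(\pi_1+1)(\pi_2+1)\cdots(\pi_{n-2}+1)(1) = n(n-1)\cdots 21 = \delta_n,
\]
so the lemma says $\delta_{n-2}\in\T_{n-2}$ forces $\delta_n\in\T_n$.

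Because the recursion changes the length by two, I would verify two base cases. The case $\delta_1 = 1$ is immediate (it is recorded already in Theorem~\ref{thm: shallowRecursive}). For $\delta_2 = 21$ I would compute directly from the definition: $D(21) = |2-1| + |1-2| = 2$, $I(21) = 1$, and $T(21) = 2 - \cyc(21) = 2 - 1 = 1$ since $21 = (1\,2)$ is a single transposition; hence $I(21) + T(21) = 2 = D(21)$, so $21$ is shallow. (One could instead apply Lemma~\ref{lem: n1} starting from the empty permutation, but checking $\delta_2$ by hand is cleaner and avoids edge-case fuss.)

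The inductive step is then immediate: assuming $\delta_{n-2}\in\T_{n-2}$, the identification above and Lemma~\ref{lem: n1} give $\delta_n\in\T_n$; running this over the two residue classes of $n$ modulo $2$ covers all $n$.

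I do not expect any genuine obstacle here: the entire content is recognizing $\delta_n$ as an instance of the Lemma~\ref{lem: n1} construction applied to $\delta_{n-2}$. The only points requiring a moment's care are the bookkeeping of the ``$+1$'' shift, the fact that Lemma~\ref{lem: n1} is an ``if and only if'' (so nothing extra needs checking in the forward direction), and remembering to treat even and odd $n$ with separate base cases since the recursion drops the length by two.
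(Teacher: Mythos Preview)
Your proposal is correct and matches the paper's approach exactly: the paper simply says the corollary follows ``by repeatedly applying'' Lemma~\ref{lem: n1}, which is precisely your two-step induction. Your explicit treatment of the separate base cases $\delta_1$ and $\delta_2$ is a welcome bit of care that the paper leaves implicit.
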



\begin{remark}
    It is possible to prove a stronger version of the above results. If $\pi \in \mathcal{T}_n$, $\tau \in \mathcal{T}_m$ and $\pi_i = i$, then the inflation of $i$ by $\tau$ remains shallow. Indeed, it is straightforward to check that the sum of the inversion number and reflection length of the inflated permutation still equals its total displacement. Lemma \ref{lem: dir sum} is the inflation of $12$ by two shallow permutations and Lemma \ref{lem: n1} is the inflation of $321$ at $2$ by a shallow permutation. We do not use this stronger result, so omit its full proof.
\end{remark}

We end this section by noting that shallow permutations necessarily avoid certain mesh patterns (see \cite{BC11}). We will denote by $3\boldsymbol{4}\boldsymbol{1}2$ the permutation pattern $3412$ where the ``4'' is equal to $n$ and the ``1'' is equal to 1. For example, $\pi = 642981537$ contains the subsequence $4913$ which is a $3\boldsymbol{4}\boldsymbol{1}2$. It also contains $6815$ which is a 3412 pattern, but not a $3\boldsymbol{4}\boldsymbol{1}2$ pattern since the ``4'' in this pattern is not equal to $n=9.$

We denote by $\underline{3}41\underline{2}$ the permutation pattern $3412$ where the ``3'' occurs in the first position and the ``2'' occurs in the last position. For example, the permutation $\pi = 672198435$ contains the subsequence $6835$ which is a $\underline{3}41\underline{2}$ pattern since 6, which is the ``3'' in this pattern, appears in the first position and 5, which is the ``2'' in this pattern, appears in the last position. 

\begin{theorem}\label{thm: 3n12}
    If $\pi\in\T_n$, then $\pi$ avoids the patterns $3\boldsymbol{4}\boldsymbol{1}2$ and $\underline{3}41\underline{2}$.
\end{theorem}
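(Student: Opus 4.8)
The plan is to prove the contrapositive for each pattern: if $\pi$ contains $3\boldsymbol{4}\boldsymbol{1}2$ (resp. $\underline{3}41\underline{2}$), then $\pi$ is not shallow, and to do this via the recursive characterization in Theorem~\ref{thm: shallowRecursive}. I would induct on $n$. For the $3\boldsymbol{4}\boldsymbol{1}2$ case, suppose $\pi$ has a subsequence in positions $a<b<c<d$ of the form $3\boldsymbol{4}\boldsymbol{1}2$, so $\pi_b=n$ and $\pi_c=1$, with $\pi_d<\pi_a$. Since $\pi_b=n$ with $b<n$, Theorem~\ref{thm: shallowRecursive} says shallowness of $\pi$ requires first that $\pi^R$ be shallow and second that $\pi^R_b=\pi_n$ be a left-to-right maximum or right-to-left minimum of $\pi^R$. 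I would split into cases according to where $d$ sits: if $d<n$, then after forming $\pi^R$ the entries in positions $a,b,c,d$ (with position $b$ now holding $\pi_n$) together with position $n$'s original entry still witness a smaller forbidden configuration, or directly contradict shallowness; if $d=n$, then $\pi^R_b=\pi_n=\pi_d$, and since $a<b$ with $\pi_a>\pi_d=\pi^R_b$ this entry is not a left-to-right maximum of $\pi^R$, while $c>b$ with $\pi^R_c=\pi_c=1<\pi^R_b$ (as $\pi_d\geq 2$) shows it is not a right-to-left minimum either, contradicting the second bullet of Theorem~\ref{thm: shallowRecursive}. The base case is small enough ($n=4$: only $3412$ itself, which one checks is not shallow) to handle by hand.

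For the $\underline{3}41\underline{2}$ pattern I would argue symmetrically using the left operator $\pi^L$ from Theorem~\ref{thm: shallowRecursiveL}. Here the ``3'' of the pattern is $\pi_1$ and the ``2'' is $\pi_n$, so $\pi_1$ plays a distinguished role, and since $\pi_j=1$ for the position $j$ of the ``1'' (which lies strictly between the position of the ``4'' and position $n$), the first bullet versus second bullet dichotomy of Theorem~\ref{thm: shallowRecursiveL} applies. The key observation mirrors the previous one: when we move $\pi_1$ into position $j$ to form $\pi^L$, the entry $\pi^L_j=\pi_n$ must be a left-to-right maximum or right-to-left minimum of $\pi^L$; but the ``4'' of the pattern appears to the left of position $j$ and is larger than $\pi_n$ (so not a left-to-right max), and $\pi_n$ is the ``2'', which is smaller than $\pi_1$ appearing after position $j$ in $\pi^L$ — wait, one must be careful about exactly which entries survive in $\pi^L$ to the right of position $j$; here the ``2'' being in the last position of $\pi$ is exactly what guarantees $\pi_1$, now in position $j$, has something smaller to... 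Actually the cleanest route is to invoke the identity $\pi^L = [(\pi^{rc})^R]^{rc}$ already established in the proof of Theorem~\ref{thm: shallowRecursiveL}: one checks that the reverse-complement of a $\underline{3}41\underline{2}$ pattern is again a $\underline{3}41\underline{2}$ pattern (it is self-conjugate under $rc$, since $3412$ is $rc$-invariant and the ``first position / last position'' constraints are swapped with each other), so the $\underline{3}41\underline{2}$ case for $\pi^L$ reduces verbatim to a statement about $(\pi^{rc})^R$, which in turn follows from the argument for the $3\boldsymbol{4}\boldsymbol{1}2$ case applied in the ambient language of $\pi^{rc}$. I would likewise double-check that $3\boldsymbol{4}\boldsymbol{1}2$ is $rc$-invariant (the ``$n$'' goes to the ``$1$'' position and vice versa, consistent with $rc$ sending maxima to minima), so that the two cases are genuinely dual.

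The main obstacle I anticipate is the bookkeeping in the inductive step when the distinguished entry of the pattern ($n$ for the first pattern, or the position-$1$ entry for the second) is itself the entry being relocated by the $R$- or $L$-operator, versus when it is merely a passive passenger: one has to verify that a $3\boldsymbol{4}\boldsymbol{1}2$ (resp. $\underline{3}41\underline{2}$) pattern in $\pi$ either descends to a genuine such pattern in $\pi^R$ (resp. $\pi^L$), feeding the induction, or else directly violates the ``left-to-right maximum or right-to-left minimum'' condition in the relevant theorem. Handling the several position-cases ($d<n$ vs.\ $d=n$; the ``4'' relocated vs.\ not) cleanly, without an explosion of subcases, is where care is needed. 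Using the $rc$-symmetry to cut the work in half — proving everything for $3\boldsymbol{4}\boldsymbol{1}2$ and deducing the $\underline{3}41\underline{2}$ statement formally — is the step I would lean on to keep the argument short.
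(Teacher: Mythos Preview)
Your $d=n$ subcase for $3\boldsymbol{4}\boldsymbol{1}2$ is correct, but the $d<n$ subcase is where the content lies and your sketch does not go through. After one application of $R$, positions $a,b,c,d$ in $\pi^R$ carry values $\pi_a,\pi_n,1,\pi_d$; this is indeed a $3412$ pattern whose ``1'' is the actual value $1$, but whose ``4'' is $\pi_n$, which need not equal $n-1$. If $\pi_p=n-1$ with $p>c$ (which is perfectly possible), then $\pi^R$ contains no $3\boldsymbol{4}\boldsymbol{1}2$ pattern at all, and the induction hypothesis gives nothing. The paper handles this not by a one-step induction but by iterating $R$ until the specific entry $\pi_d$ becomes the last element and is relocated; along the way it maintains the invariant that every element moved to the left of $1$ must be a left-to-right maximum and hence lands in a position $\geq b$, so that when $\pi_d$ is finally moved, $\pi_a$ (still at position $a<b$) blocks it from being a left-to-right maximum while the $1$ to its right blocks it from being a right-to-left minimum.

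Your reduction for $\underline{3}41\underline{2}$ contains two errors. First, in $\underline{3}41\underline{2}$ the constraints are positional (the ``3'' is $\pi_1$ and the ``2'' is $\pi_n$); the ``1'' of the pattern is \emph{not} required to be the value $1$, so the operator $\pi^L$ does not interact with this pattern the way you describe. Second, reverse-complement is the wrong symmetry: as you yourself verify, both $3\boldsymbol{4}\boldsymbol{1}2$ and $\underline{3}41\underline{2}$ are $rc$-self-dual, so $rc$ cannot interchange them and your ``reduces verbatim'' step is circular. The symmetry that does the job is the inverse: one checks that $\pi$ contains $3\boldsymbol{4}\boldsymbol{1}2$ if and only if $\pi^{-1}$ contains $\underline{3}41\underline{2}$ (value constraints and position constraints swap under inversion), and then Proposition~\ref{prop: invRc} transports the first avoidance to the second. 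This is exactly the paper's argument.
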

\begin{proof}
    Let us proceed by contradiction. Suppose $\pi$ contains a $3\boldsymbol{4}\boldsymbol{1}2$ pattern. We will show that upon repeatedly applying the right operator $R$, we will eventually move an element that will be neither a left-to-right maximum nor a right-to-left minimum in the new permutation, contradicting that $\pi$ is shallow. 

    To this end, suppose $\pi_r\pi_i\pi_j\pi_s$ is a $3\boldsymbol{4}\boldsymbol{1}2$ pattern, so $\pi_i=n$, $\pi_j=1$, and $\pi_r>\pi_s.$ Notice that when we apply the right operator once, we get
    \[\pi^R = \pi_1\ldots \pi_{i-1}\pi_n\pi_{i+1}\ldots \pi_{j-1}1\pi_{j+1}\ldots\pi_{n-1}.\]
    If $s=n$, then we have a contradiction since $1<\pi_s<\pi_r$ and so is neither a left-to-right maximum nor a right-to-left minimum in $\pi^R.$ If $s<n$, then we must have that $\pi_n$ is a left-to-right maximum, or else $\pi$ would not be shallow, so the element in position $i$ is still larger than all elements in positions 1 through $i-1$.
    
    Now, let us continue to apply the right operator, $R$. Each time, the last element is either deleted (if it is the largest element), moved to a position to the right of 1 (if the largest element is also to the right of 1), or it is moved to the left of 1, in which case it must be a left-to-right maximum. Note that each time an element is moved to the left of 1, it must be in a position greater than or equal to $i$ since each element moved over is itself larger than all elements in positions 1 through $i-1.$ Eventually, $\pi_{s}$ will be moved to the left of 1, and it will be moved to a position greater than or equal to $i$. However, $\pi_s<\pi_r$ with $r<i$. Thus $\pi_s$ cannot be a left-to-right maximum in this permutation. It also cannot be a right-to-left minimum since $1$ is to its right. Thus the original permutation is not shallow. 

    The other avoidance follow from Proposition~\ref{prop: invRc} and the fact that $\pi$ avoids $3\boldsymbol{4}\boldsymbol{1}2$ if and only if $\pi^{-1}$ avoids $\underline{3}41\underline{2}$.
\end{proof}

\section{Shallow permutations that avoid 132 or 213}\label{sec:132}

In this section, we enumerate shallow permutations that avoid the pattern 132. We also consider the number
of such permutations with a given number of descents, as well as those that exhibit certain symmetry. Let $\mathcal{T}_n(\sigma)$ denote the permutations $\pi \in S_n$ that are shallow and avoid $\sigma$. We set $t_n(\sigma) = |\mathcal{T}_n(\sigma)|$. Note that by Proposition \ref{prop: invRc} $\mathcal{T}_n(132) = \mathcal{T}_n(213)$, so proving Theorem~\ref{thm: 132} for shallow permutations avoiding 132 holds for 213 as well.

\subsection{Enumeration of 132-avoiding shallow permutations}
In this subsection, we will prove the following theorem. 
\begin{theorem} \label{thm: 132}
    For $n \geq 1$ and $\sigma \in \{132, 213\}$, $t_n(\sigma) = F_{2n-1}$, the $(2n-1)$st Fibonacci number.
\end{theorem}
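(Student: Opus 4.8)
The plan is to set up a bijection-friendly recursion on $\mathcal{T}_n(132)$ using the left operator $\pi^L$ from Theorem~\ref{thm: shallowRecursiveL}, since reducing from the left interacts cleanly with $132$-avoidance (the position of the $1$ and the entries to its left are constrained). First I would record the base cases $t_1=1=F_1$, $t_2=2=F_3$, and then aim to show $t_n(132)=3t_{n-1}(132)-t_{n-2}(132)$ for $n\geq 3$, which is exactly the recursion satisfied by the odd-indexed Fibonacci numbers $F_{2n-1}$ (equivalently one can target $t_n=t_{n-1}+\sum_{k\le n-1}t_k$-style relations, but the three-term one is cleanest to match against $F_{2n-1}$).

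To obtain the recursion I would classify $\pi\in\mathcal{T}_n(132)$ by the position $j$ of the value $1$. If $\pi_1=1$, then $\pi=1\oplus\pi'$ with $\pi'$ a shallow $132$-avoider of length $n-1$ (shallowness of $\pi'$ is forced by Theorem~\ref{thm: shallowRecursiveL}, and $132$-avoidance of $\pi'$ is immediate), contributing $t_{n-1}(132)$. If $\pi_j=1$ with $j>1$, then $132$-avoidance forces $\pi_1>\pi_2>\cdots>\pi_{j-1}$ and moreover every entry before the $1$ must exceed every entry after it (else an entry after the $1$, together with $\pi_1$ and a smaller entry, or with $1$, produces a $132$); combined with Theorem~\ref{thm: 3n12} (avoidance of $\underline{3}41\underline{2}$ and of $3\boldsymbol{4}\boldsymbol{1}2$) this pins down the structure severely — I expect that $\pi$ must look like $\delta$-like prefix followed by $1$ followed by a shallow $132$-avoider on the remaining (larger) values, with the left-to-right-maximum / right-to-left-minimum condition of Theorem~\ref{thm: shallowRecursiveL} further restricting where the block boundary sits. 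Carefully counting these, I anticipate the $j>1$ cases contribute $2t_{n-1}(132)-t_{n-2}(132)$, giving the total $3t_{n-1}(132)-t_{n-2}(132)$.

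The main obstacle will be the bookkeeping in the $j>1$ case: verifying exactly which configurations of the decreasing prefix and the suffix block are simultaneously $132$-avoiding \emph{and} shallow, and showing the shallowness constraint from Theorem~\ref{thm: shallowRecursiveL} (that $\pi_n$ be a left-to-right maximum or right-to-left minimum in $\pi^L$) translates into a clean count rather than a messy case split. I would handle this by peeling off the prefix one entry at a time via $\pi^L$ and tracking an auxiliary statistic (the length of the forced decreasing prefix), reducing to a two-parameter recursion that collapses to the three-term one. An alternative I would keep in reserve, in case the left-operator analysis gets unwieldy, is to instead use the right operator $\pi^R$ and classify by the position of $n$, or to exploit $\mathcal{T}_n(132)=\mathcal{T}_n(213)$ and work with whichever pattern gives the more rigid prefix/suffix decomposition; but I expect the left-operator approach on $132$ to be the most direct route to the Fibonacci recursion. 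Finally, once the recursion and initial conditions are established, concluding $t_n(132)=F_{2n-1}$ is immediate by induction, and the $213$ case follows from Proposition~\ref{prop: invRc} as already noted.
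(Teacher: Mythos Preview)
Your structural claims about what $132$-avoidance forces around the position of $1$ are wrong. Take $\pi=23451$: it is $132$-avoiding (every length-$3$ subsequence is a $123$ or a $231$), it is shallow ($D=8$, $I=4$, one $5$-cycle so $T=4$), it has $\pi_5=1$, and yet the prefix $2345$ is increasing, not decreasing. In fact neither ``the prefix before $1$ is decreasing'' nor ``every entry before $1$ exceeds every entry after $1$'' follows from $132$-avoidance: for a triple $\pi_i\pi_{i'}1$ with $i<i'<j$ both values exceed $1$, so the pattern is $231$ or $321$, never $132$; and for a triple $\pi_i\,1\,\pi_k$ with $i<j<k$ the pattern is $213$ or $312$, never $132$. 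The second claim \emph{does} hold for $213$-avoidance, so since $t_n(132)=t_n(213)$ you could swap patterns---but even then the decreasing-prefix claim does not follow from pattern avoidance alone. It genuinely requires shallowness, and establishing it is essentially the content of Lemma~\ref{lem: 132-ends-in-1} (transported through reverse-complement), which you have not proved.

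The paper's proof proceeds by classifying $\mathcal{T}_n(132)$ according to the position of $n$---dual to your position-of-$1$ idea via $\pi\mapsto\pi^{rc}$---after first proving as a separate lemma that if $n$ is interior then $\pi_n=1$, and that if $n$ is at distance at least $2$ from both ends then also $\pi_1=n-1$. Even with that lemma in hand the count does not collapse to the two-term recursion $a_n=3a_{n-1}-a_{n-2}$ you target: the paper introduces an auxiliary count $b_n$ (permutations ending in $1$), derives coupled recursions for $a_n$ and $b_n$, and only after eliminating $b_n$ obtains the three-term relation $a_n=2a_{n-1}+2a_{n-2}-a_{n-3}$. So your expectation that the $j>1$ cases contribute exactly $2t_{n-1}-t_{n-2}$ is unsupported, and the paper's need for the auxiliary sequence suggests it will not fall out directly.
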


We will first establish a few lemmas. This first lemma guarantees that for any shallow 132-avoiding permutation $\pi$, we must have that if $\pi$ does not start or end with $n$, it must end with $1$ and in the case that $n$ is not in the second-to-last position, $\pi$ must start with $(n-1).$

\begin{lemma} \label{lem: 132-ends-in-1}
    For $n\geq 3$, suppose $\pi \in \mathcal{T}_n(132)$.
    \begin{itemize}
        \item If $\pi_j =n$ with $2 \leq j \leq n-1$ then $\pi_n = 1$, and
        \item If $\pi_j = n$ with $2\leq j \leq n-2$ then $\pi_1 = n-1$.
    \end{itemize}
\end{lemma}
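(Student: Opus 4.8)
The plan is to exploit the structure of a shallow permutation near its largest entry together with 132-avoidance. Suppose $\pi \in \mathcal{T}_n(132)$ with $\pi_j = n$ for some $2 \le j \le n-1$. First I would analyze what lies to the right of position $j$. Since $\pi_j = n$ is not in the last position, there is at least one entry $\pi_k$ with $k > j$. I claim $\pi_k = 1$: indeed, for the triple $(j, k, \ell)$ with $j < k < \ell$ we cannot have $\pi_k < \pi_\ell$ (that would be a 132 pattern with $\pi_j = n$ playing the ``3''), so the entries after position $j$ are decreasing; hence the last entry $\pi_n$ is the smallest among positions $j, j+1, \dots, n$. To force $\pi_n = 1$ I would invoke the recursive characterization of shallowness (Theorem~\ref{thm: shallowRecursive}): when we apply the right operator $R$, the entry $n$ is removed from position $j$ and $\pi_n$ is placed into position $j$, and shallowness of $\pi$ requires $\pi_n$ to be a left-to-right maximum or a right-to-left minimum of $\pi^R$. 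Since the entries strictly after position $j$ in $\pi$ are all larger than $\pi_n$ and they sit to the right of position $j$ in $\pi^R$, $\pi_n$ is a right-to-left minimum of $\pi^R$ only if it is $\le$ everything to its right; but if $\pi_n \ne 1$ then $1$ appears somewhere, and $1$ cannot be to the left of position $j$ (else $1, \pi_j = n, \dots$ together with any later entry larger than $1$ would be fine — I should instead argue directly): actually the cleanest route is that $\pi_n$ must be a left-to-right maximum or right-to-left minimum; I would show it is neither unless $\pi_n = 1$. If $1 = \pi_m$ with $m < j$, then $(m, j)$ and any position after $j$ give $\pi_m = 1 < \pi_j = n$; 132-avoidance of the triple $\pi_m \pi_j \pi_n$ forces $\pi_n < \pi_m = 1$, impossible, so $1$ does not occur before position $j$; and $1$ does not occur strictly between $j$ and $n$ because the suffix after $j$ is decreasing with minimum $\pi_n$. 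Hence $1 = \pi_n$, proving the first bullet.

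For the second bullet, assume further $j \le n-2$, so there are at least two entries after position $j$, and by the first bullet $\pi_n = 1$. I would consider the prefix $\pi_1 \pi_2 \cdots \pi_{j-1}$, which is nonempty since $j \ge 2$. By 132-avoidance, for any $r < j$ the triple $\pi_r, \pi_j = n, \pi_{n-1}$ would be a 132 pattern unless $\pi_r > \pi_{n-1}$; since this holds for every $r < j$, every prefix entry exceeds $\pi_{n-1}$. Now I apply the right operator repeatedly, as in the proof of Theorem~\ref{thm: 3n12}: after the first application, $n$ leaves position $j$ and $1$ lands there; continuing, each time the current largest entry is removed from the tail, and entries get shuffled, but the key invariant (established exactly as in the $3\boldsymbol{4}\boldsymbol{1}2$ argument) is that any entry moved to the left of the entry $1$ must be a left-to-right maximum, hence must land at a position $\ge j$ and be larger than all prefix entries of the original $\pi$. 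Eventually $\pi_{n-1}$ itself gets moved leftward past $1$; since $\pi_{n-1}$ is smaller than every original prefix entry, it cannot be a left-to-right maximum there, and it cannot be a right-to-left minimum since $1$ sits to its right — contradicting shallowness — unless in fact there is no prefix entry strictly between $\pi_{n-1}$ and $n$ in value, i.e. the only possibility consistent with shallowness is that $\pi_1 \cdots \pi_{j-1}$ together with $\pi_j = n$ forms a decreasing run $n-1, n-2, \dots$; combined with 132-avoidance forcing the prefix to be decreasing and to lie above everything that follows, this pins down $\pi_1 = n-1$.

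The main obstacle I anticipate is making the ``apply $R$ repeatedly'' bookkeeping fully rigorous: one must track carefully where $1$ sits after each application (it always stays put until it is adjacent to, or overtaken by, larger entries being routed left) and verify the invariant that every entry inserted to the left of $1$ exceeds all original prefix values and is inserted at position $\ge j$. This is morally identical to the argument already carried out in the proof of Theorem~\ref{thm: 3n12}, so I would streamline it by reducing to that theorem where possible: in particular, if $\pi_1 \ne n-1$ then some value $v$ with $\pi_{n-1} < v < n$ fails to appear in the prefix in a way compatible with a decreasing prefix, and one can locate an explicit $3\boldsymbol{4}\boldsymbol{1}2$ or $\underline 3 4 1 \underline 2$ occurrence (using $\pi_j = n$, $\pi_n = 1$, a prefix entry as the ``3'', and $\pi_{n-1}$ as the ``2''), contradicting Theorem~\ref{thm: 3n12} directly and avoiding the iterated-$R$ computation altogether. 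That shortcut, if it goes through cleanly, is the version I would ultimately write.
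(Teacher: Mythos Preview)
Your first-bullet argument contains a genuine error: you claim that for $j < k < \ell$, the triple $\pi_j \pi_k \pi_\ell$ with $\pi_j = n$ and $\pi_k < \pi_\ell$ would be a 132 pattern. It is not: in a 132 pattern the \emph{first} entry is the smallest, so the largest value $n$ cannot occupy the first slot. Consequently the suffix $\pi_{j+1}\cdots\pi_n$ need not be decreasing (for instance $3412$ avoids 132 yet its suffix after $n$ is $1,2$), and your conclusion that $\pi_n$ is automatically the minimum of the suffix fails. The correct 132-consequence, which the paper uses, is that every entry \emph{before} position $j$ exceeds every entry \emph{after} position $j$. From this, $\pi_{j-1} > \pi_n$, so $\pi_n$ is not a left-to-right maximum in $\pi^R$; and since (as you correctly argue) $1$ must lie to the right of $n$, if $1=\pi_m$ with $j<m<n$ then $1$ sits to the right of position $j$ in $\pi^R$, so $\pi_n$ is not a right-to-left minimum either---forcing $\pi_n=1$ by shallowness. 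Note that shallowness is genuinely needed: $3412$ shows the first bullet is false under 132-avoidance alone.

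Your second-bullet plan is far more involved than necessary, and the proposed shortcut via Theorem~\ref{thm: 3n12} does not go through: since $\pi_n=1$, any $3\boldsymbol{4}\boldsymbol{1}2$ occurrence would need the ``1'' at position $n$, leaving no room for the ``2'' to its right; and any $\underline{3}41\underline{2}$ occurrence would need $\pi_n$ to play the ``2'', but $\pi_n=1$ is the global minimum. The paper's route is a clean two-step: first observe (by 132-avoidance, since $j\ge 2$) that $n-1$ sits at some position $k<j$; then apply $R$ twice. In $\pi^{R^2}$ the value $\pi_{n-1}$ lands at position $k$, and it cannot be a right-to-left minimum (the value $1$ is still at position $j>k$, which survives because $j\le n-2$), so it must be a left-to-right maximum. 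But every entry in positions $1,\ldots,j-1$ of $\pi^R$ exceeds $\pi_{n-1}$ (these are original prefix entries, each larger than any suffix entry), so $k=1$ is forced, i.e.\ $\pi_1=n-1$. No iterated-$R$ bookkeeping or appeal to Theorem~\ref{thm: 3n12} is required.
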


\begin{proof}
Let us consider the first bullet point.
Suppose $\pi \in \mathcal{T}_n(321)$ with $\pi_j=n$ for $2\leq j\leq n-1$. Note that $\pi_i>\pi_k$ for any $i<j<k$ since $\pi$ avoids 132.
    By Theorem \ref{thm: shallowRecursive}, $\pi^R_j=\pi_n$ must be either a left-to-right maximum or right-to-left minimum. It cannot be a left-to-right maximum because $\pi^R_{j-1} = \pi_{j-1} > \pi_n = \pi^R_j$. So $\pi^R_j$ must be a right-to-left minimum. However, since $\pi$ is $132$ avoiding we know that $1$ appears to the right of $n$ in $\pi$, so the only way for $\pi^R_j$ to be a right-to-left minimum is if $\pi^R_j = 1$, and thus $\pi_n = 1$.

    Now let us prove the second bullet point. Since $\pi$ is $132$ avoiding and $j > 1$, $n-1$ must occur to the left of $n$ in $\pi$. This means that $n-1$ occurs to the left of $1$ in $\pi^R$. Suppose $\pi^R_k = n-1$ with $1\leq k < j$, we will show that $k = 1$. Again by Theorem \ref{thm: shallowRecursive}, $\pi^{R^2}_k$ must be a left-to-right maximum or right-to-left minimum. But now it cannot possibly be a right-to-left minimum because $\pi^{R^2}_j = 1$ by Lemma \ref{lem: 132-ends-in-1} and $k < j \leq n-2$. So $\pi^{R^2}_k$ must be a right-to-left maximum. Since $\pi$ was $132$ avoiding every entry to the left of $1$ in $\pi^R$ will be larger than every entry to the right of $1$. So the only way $\pi^{R^2}_k$ is a right-to-left maximum is if $k = 1$.
\end{proof}

\begin{proof}[Proof of Theorem \ref{thm: 132}.]
    Let $a_n = |\mathcal{T}_n(132)|$ and $b_n$ be the number of $\pi \in \mathcal{T}_n(132)$ ending with $1$. Notice that $b_n$ is also the number of $\pi \in \mathcal{T}_n(132)$ beginning with $n$ since $\pi$ is a 132-avoiding shallow permutation if and only if $\pi^{-1}$ is. By Lemma \ref{lem: 132-ends-in-1}, we know that each $\pi \in \mathcal{T}_n(132)$ either begins with $n$, ends with $n$ or ends with $1$. There are clearly $a_{n-1}$ such permutations that end in $n$ (by removing that fixed point) and by Lemma~\ref{lem: n1}, there are $a_{n-2}$ such permutation that start with $n$ and end with 1.  Thus it follows that
    \[
    a_n = a_{n-1} + 2b_n - a_{n-2}.
    \]
    Next, let us find a recurrence for $b_n$; let $\pi\in\mathcal{T}_n(132)$ with $\pi_n=1$ and consider the position of $n$. If $\pi_{n-1} = n$, then $\pi^R \in \mathcal{T}_n(132)$ ending in $1$ and so there are $b_{n-1}$ such permutations. 
    If $\pi_j = n$ for $2 \leq j \leq n-2$, then $\pi_1 = n-1$ and so $\pi^{RL}$ is the 132-avoiding permutation obtained by deleting 1 from the end and $n-1$ from the front. Since in $\pi^R$, $\pi_n=1$ is clearly a right-to-left minimum and in $\pi^{RL}$, $\pi_1=(n-1)'=n-2$ will clearly be a right-to-left minimum, this permutation is also shallow. Since the resulting permutation is any shallow 132-avoiding permutation
    that does not end in $n-2$, there are $a_{n-2} - a_{n-3}$ such permutations. Finally, if $\pi_1 = n$, there are clearly $a_{n-2}$ such permutations by Lemma~\ref{lem: n1}. 
    
    Altogether, we find
    \[
    b_n = b_{n-1} + 2a_{n-2} - a_{n-3}.
    \]
    Substituting this back into our recursion for $a_n$,
    \begin{align*}
        a_n &= a_{n-1} + 2(b_{n-1} + 2a_{n-2} - a_{n-3}) - a_{n-2}\\
            &= a_{n-1} + (a_{n-2} +2b_{n-1} - a_{n-3}) + 2a_{n-2} - a_{n-3}\\
            &= 2a_{n-1} + 2a_{n-2} - a_{n-3}
    \end{align*}
    which is precisely the recursion satisfied by $F_{2n-1}$. 
\end{proof}

\subsection{132-avoiding shallow permutation by descent number}
In this subsection, we will refine the enumeration of 132-avoiding shallow permutations by their descent number.
We first present the following lemma. 

\begin{lemma} \label{lem: 132-descent-inverse}
    If $\pi \in \S_n(132)$ has $k$ descents, then $\pi^{-1} \in \S_n(132)$ also has $k$ descents.
\end{lemma}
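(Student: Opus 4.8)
The plan is to show that the descent set of a 132-avoiding permutation is determined by, and determines, a structure that is symmetric under taking inverses — essentially a bijection between descent bottoms and descent tops. First I would recall the well-known structural fact about 132-avoiding permutations: if $\pi \in \S_n(132)$, then reading $\pi$ from left to right, the left-to-right maxima occupy an initial segment of values $\{?,\dots,n\}$ in the sense that once we place $n$, everything before it is larger than everything after it, and recursively the permutation decomposes. More precisely, a 132-avoiding permutation can be written uniquely as $\pi = \alpha \, n \, \beta$ where every entry of $\alpha$ exceeds every entry of $\beta$, and $\alpha$ (on its value set) and $\beta$ (on its value set) are themselves 132-avoiding. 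This recursive decomposition is the engine of the proof.

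The key steps, in order: (1) Set up the recursive decomposition $\pi = \alpha\, n\, \beta$ with $|\alpha| = j-1$ where $\pi_j = n$, noting $\alpha$ uses the top values $\{n-j+1,\dots,n-1\}$ and $\beta$ uses $\{1,\dots,n-j\}$. (2) Count descents of $\pi$: there are $\des(\alpha)$ descents internal to $\alpha$, then no descent at the step from position $j-1$ into $n$, then a descent from $n$ into the first entry of $\beta$ (provided $\beta$ is nonempty), then $\des(\beta)$ descents internal to $\beta$; so $\des(\pi) = \des(\alpha) + \des(\beta) + [\beta \neq \varnothing]$. (3) Now analyze $\pi^{-1}$: the value $n$ sits in position $j$, so $\pi^{-1}_n = j$, i.e.\ $\pi^{-1}$ ends with the entry $j$ in... actually $\pi^{-1}$ has its largest-position information encoded differently, so the cleaner route is to use the alternative symmetric decomposition. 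I would instead pin down where $1$ goes and use that $\pi^{-1}$ is also 132-avoiding (a standard fact, since $132^{-1} = 132$), then argue that $\pi^{-1}$ decomposes as $\pi^{-1} = \alpha' \, n \, \beta'$ where $\alpha'$ and $\beta'$ are, up to reduction, the inverses of the blocks of $\pi$ read in the appropriate order — concretely, the positions of the large values in $\pi$ become the large values of $\pi^{-1}$, and the block structure is preserved. (4) Conclude by induction on $n$: by the induction hypothesis $\des(\alpha^{-1}) = \des(\alpha)$ and $\des(\beta^{-1}) = \des(\beta)$, and the same block-count formula gives $\des(\pi^{-1}) = \des(\alpha^{-1}) + \des(\beta^{-1}) + [\beta' \neq \varnothing] = \des(\alpha) + \des(\beta) + [\beta \neq \varnothing] = \des(\pi)$, using that $\beta$ is empty iff $n$ is in the last position iff $1$... iff the corresponding block of $\pi^{-1}$ is empty.

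The main obstacle I anticipate is step (3): making the claim "$\pi^{-1}$ decomposes in a parallel way with blocks that are the inverses of $\pi$'s blocks" precise and correct. The subtlety is that the decomposition $\pi = \alpha\,n\,\beta$ is a decomposition by the \emph{position} of the largest \emph{value}, whereas inverting swaps the roles of positions and values, so one must check carefully that $\pi^{-1}$ really does split so that its top $j-1$ values form a contiguous initial block. Equivalently, one must verify that in $\pi^{-1}$ the values $\{n-j+2,\dots,n\}$ occupy the first $j-1$ positions — this should follow because those values are exactly the positions $\{?\}$... this requires tracking that $\alpha$'s entries, being the values $\{n-j+1,\dots,n-1\}$ located in positions $\{1,\dots,j-1\}$, contribute to $\pi^{-1}$ exactly the entries in positions $\{n-j+1,\dots,n-1\}$ with values in $\{1,\dots,j-1\}$ — so the block structure of $\pi^{-1}$ is actually governed by where $1$ sits in $\pi$, not where $n$ sits. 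I would therefore hedge by instead doing the induction using the decomposition at the value $1$ (or equivalently applying the reverse-complement, under which 132 is invariant, to convert the "$n$ in position $j$" decomposition into a "$1$ in position $n+1-j$" decomposition), so that position-based and value-based splittings align and the inductive bookkeeping on descents goes through cleanly. Alternatively — and this may be the slickest route — one can avoid the block gymnastics entirely by using the classical fact that for 132-avoiding $\pi$, $i \in \mathrm{Des}(\pi)$ iff a certain explicit condition on the "staircase"/Dyck-path encoding holds, and that encoding is manifestly inverse-symmetric; but I expect the recursive argument above to be the most self-contained.
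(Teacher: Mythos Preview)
Your proposal is correct and takes essentially the same inductive approach as the paper: decompose a 132-avoider at the position of $n$ and compare descent counts block by block. The obstacle you flag in step~(3) evaporates once you write the decomposition in sum notation as $\pi = (\tau \oplus 1) \ominus \sigma$ and use the identity $(A \ominus B)^{-1} = B^{-1} \ominus A^{-1}$, which gives $\pi^{-1} = \sigma^{-1} \ominus (\tau^{-1} \oplus 1)$ immediately; this is exactly what the paper does, so there is no need to locate $n$ inside $\pi^{-1}$, switch to a decomposition at the value $1$, or pass through a Dyck-path encoding.
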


\begin{proof}
    We will proceed by strong induction on $n$. The result is clear for $n \leq 3$. So assume $n \geq 4$ and $\pi \in \S_n(132)$. This means $\pi = (\tau \oplus 1) \ominus \sigma$ for some $\tau \in \S_k(132)$ and $\sigma \in S_{n-k-1}(132)$. In this case, $\pi^{-1} = \sigma^{-1} \ominus (\tau^{-1} \oplus 1)$. By induction $\sigma^{-1}$ and $\tau^{-1}$ have the same number of descents as $\sigma$ and $\tau$, we lose one descent in position $j$ of $\pi$, but gain an additional descent in position $n-j$ of $\pi^{-1}$ that does not come from any of the descents in $\sigma^{-1}$ or $\tau^{-1}$. We therefore preserve the number of descents, the result follows by induction.
\end{proof}

\begin{example}
    Consider $\pi = 534621 \in \S_6(132)$ with $3$ descents. $\pi = (312 \oplus 1) \ominus (21)$ and $\pi^{-1} = 652314$ which is $(21)^{-1} \ominus ((312)^{-1} \oplus 1)$. The number of descents in $(21)^{-1}$ and $(312)^{-1}$ are the same as in $21$ and $312$, we lose the descent in position $4$ of $\pi$, but gain a descent in position $6-4 = 2$ of $\pi^{-1}$ when we transition from $\sigma^{-1}$ to $\tau^{-1}$.
\end{example}

We now adapt the proof of Theorem \ref{thm: 132} to keep track of descents to arrive at the following result.

\begin{theorem}
    For $n\geq 2$, the number of shallow, 132-avoiding permutations with $k$ descents is equal to \[\displaystyle\binom{2n-2-k}{k}.\]
\end{theorem}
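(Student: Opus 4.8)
The plan is to mimic the recursive decomposition from the proof of Theorem~\ref{thm: 132}, but now tracking the number of descents as a second parameter. Let $a_{n,k}$ denote the number of $\pi\in\mathcal{T}_n(132)$ with $k$ descents, and let $b_{n,k}$ denote the number of those that end in $1$; by the inverse symmetry (Proposition~\ref{prop: invRc}) together with Lemma~\ref{lem: 132-descent-inverse}, $b_{n,k}$ also counts those $\pi\in\mathcal{T}_n(132)$ with $k$ descents that begin with $n$. I would first re-derive the structural recurrences of the earlier proof while bookkeeping descents. By Lemma~\ref{lem: 132-ends-in-1}, every $\pi\in\mathcal{T}_n(132)$ either ends with $n$, begins with $n$, or ends with $1$. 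Removing a trailing fixed point $n$ changes nothing about descents, contributing $a_{n-1,k}$. Prepending $n$ and appending $1$ (Lemma~\ref{lem: n1}): this adds a descent at the front (from $n$ to $\pi_1+1$) and a descent at the back (from $\pi_{n-2}+1$ down to $1$), so a permutation counted by $a_{n-2,k-2}$ becomes one counted with $k$ descents — but one must be careful about double-counting the permutations that both begin with $n$ and end with $1$, exactly as in the original proof, so the analogue of $a_n = a_{n-1}+2b_n-a_{n-2}$ becomes $a_{n,k}=a_{n-1,k}+2b_{n,k}-a_{n-2,k-2}$.

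Next I would find the descent-refined recurrence for $b_{n,k}$, following the three cases in the original proof for $\pi\in\mathcal{T}_n(132)$ with $\pi_n=1$ according to the position of $n$. Case $\pi_{n-1}=n$: applying $\pi^R$ deletes the fixed point $n$ and leaves a 132-avoiding shallow permutation still ending in $1$; the descent count is unchanged, giving $b_{n-1,k}$. Case $\pi_1=n$: by Lemma~\ref{lem: n1} the map strips $n$ from the front and $1$ from the back, removing two descents, giving $a_{n-2,k-2}$. Case $2\le j\le n-2$ with $\pi_j=n$: here $\pi_1=n-1$ by Lemma~\ref{lem: 132-ends-in-1}, and $\pi^{RL}$ deletes $1$ from the end and $n-1$ from the front; I would check that this removes exactly one descent (the descent at the very front, from $n-1$; the trailing $1$ contributes a descent that becomes the new trailing descent, or is otherwise accounted for), and that the image ranges over all shallow 132-avoiders with $k-1$ descents not ending in their maximal value. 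That yields a contribution of the form $a_{n-2,k-1}-a_{n-3,k-1}$ (the subtraction removing those ending in $n-2$). Assembling, $b_{n,k}=b_{n-1,k}+a_{n-2,k-2}+a_{n-2,k-1}-a_{n-3,k-1}$, and substituting into the $a_{n,k}$ recurrence should give a clean three-term recurrence in $n$ (with shifts in $k$), the refinement of $a_n=2a_{n-1}+2a_{n-2}-a_{n-3}$.

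The final step is to verify that $\binom{2n-2-k}{k}$ satisfies this recurrence with the correct initial conditions. Setting $c_{n,k}=\binom{2n-2-k}{k}$, I expect the recurrence to take a form like $c_{n,k}=2c_{n-1,k}+2c_{n-1,k-1}-c_{n-2,k-2}$ or a close variant; this would be checked via Pascal's rule $\binom{m}{k}=\binom{m-1}{k}+\binom{m-1}{k-1}$ applied repeatedly to $\binom{2n-2-k}{k}$. Base cases: $n=2$ gives permutations $12$ ($0$ descents) and $21$ ($1$ descent), matching $\binom{2}{0}=1$ and $\binom{1}{1}=1$; I would also check $n=3$ against the list of $\mathcal{T}_3(132)$. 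As a sanity check, summing over $k$ should recover $\sum_k\binom{2n-2-k}{k}=F_{2n-1}$, a standard Fibonacci identity, consistent with Theorem~\ref{thm: 132}.

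The main obstacle I anticipate is the bookkeeping in the third case (the ``middle'' position of $n$): getting the descent shift exactly right when simultaneously deleting $n-1$ from the front and $1$ from the back, and correctly characterizing the image set so that the inclusion–exclusion term $-a_{n-3,k-1}$ has the right descent index. A secondary subtlety is the double-counting correction for permutations that both start with $n$ and end with $1$, since prepending $n$/appending $1$ interacts with descents in a way that must be consistent across the $a$ and $b$ recurrences. Once the recurrence is pinned down correctly, the binomial verification is a routine application of Pascal's identity.
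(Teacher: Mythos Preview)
Your overall plan matches the paper's: same $a_{n,k}$ and $b_{n,k}$, same use of Lemma~\ref{lem: 132-descent-inverse}, and your recurrence $a_{n,k}=a_{n-1,k}+2b_{n,k}-a_{n-2,k-2}$ is correct. The failure is exactly where you flagged it. Write $\pi=(n-1)\,\pi_2\cdots\pi_{j-1}\,n\,\pi_{j+1}\cdots\pi_{n-1}\,1$ with $2\le j\le n-2$; then $\pi^{RL}$ is (up to reduction) $\pi_2\cdots\pi_{j-1}\,(n-1)\,\pi_{j+1}\cdots\pi_{n-1}$, with the largest element landing in position $j-1$. A direct comparison of descent sets shows $\des(\pi)-\des(\pi^{RL})=1$ when $j=2$ but $=2$ when $j\ge 3$: position~$1$ of $\pi$ is a descent only in the latter case (since $\pi_1\pi_2=(n-1)n$ is an ascent when $j=2$). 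Correspondingly, $\pi^{RL}$ \emph{begins} with its maximum when $j=2$ (contributing $b_{n-2,k-1}$) and has its maximum strictly interior when $j\ge3$ (contributing $a_{n-2,k-2}-b_{n-2,k-2}-a_{n-3,k-2}$). Your combined term $a_{n-2,k-1}-a_{n-3,k-1}$ is therefore incorrect; at $(n,k)=(5,2)$ your recurrence gives $b_{5,2}=6$, whereas the true value is $5$.

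The correct $b$-recurrence, $b_{n,k}=2a_{n-2,k-2}+b_{n-1,k}+b_{n-2,k-1}-b_{n-2,k-2}-a_{n-3,k-2}$, is too tangled to substitute directly. The paper's key extra step is to prove by induction that $b_{n,k}=a_{n-1,k-1}$; once this is established, substitution into the $a$-recurrence collapses everything to $a_{n,k}=a_{n-1,k}+2a_{n-1,k-1}-a_{n-2,k-2}$ (note: not the $2a_{n-1,k}$ form you guessed), and this is precisely the Pascal-type identity satisfied by $\binom{2n-2-k}{k}$.
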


\begin{proof}
    Let $a_{n,k}$ be the number of shallow, 132-avoiding permutations of length $n$ with $k$ descents and let $b_{n,k}$ be the number of such permutations that end with $\pi_n=1$. Note that by Lemma~\ref{lem: 132-descent-inverse}, $b_{n,k}$ is also the number of shallow, 132-avoiding permutations with $k$ descents that starts with $\pi_1=n.$ 

    As in the proof of Theorem~\ref{thm: 132}, we know that for any permutation $\pi\in\mathcal{T}_n(132),$ by Lemma~\ref{lem: 132-ends-in-1}, $\pi$ has either $\pi_n = 1, \pi_n = n$ or $\pi_1 = n$.  It is clear there are $a_{n-1,k}$ shallow, 132-avoiding permutations with $k$ descents ending in $n$ since adding an $n$ to the end preserves shallowness and does not change the number of descents. For $n\geq 3,$ it also clear that there are $a_{n-2,k-2}$ permutations that both begin with $n$ and end with 1, which is seen by deleting both $n$ and $1$ to obtain a shallow permutation that still avoids 132 and has two fewer descents.
    This means
    \[
    a_{n,k} = a_{n-1,k} + 2b_{n,k} - a_{n-2,k-2}.
    \]

    Now, let us consider those permutations with $\pi_n=1$. As before, if $\pi_{n-1}=n$, then there are still $k$ descents in $\pi^R$, which still ends in 1, and so $b_{n-1,k}$ permutations. If $\pi_j=n$ for $2\leq j \leq n-2$, then $\pi_1=n-1$ by Lemma~\ref{lem: 132-ends-in-1}. If $j=2$, then $\pi^{RL}$ has one fewer descent and begins with its largest element $n-2.$ If $3\leq j\leq n-2$, then   $\pi^{RL}$ has two fewer descents, and it must not end or begin with its largest element $n-2.$ Thus in total, there are $b_{n-2,k-1} + a_{n-2,k-2} - b_{n-2,k-2}-a_{n-3,k-2}$ permutations. Finally, if $\pi_1=n$, as stated above there are $a_{n-2,k-2}$ such permutations. In total, we have 
    \[b_{n,k} = 2a_{n-2,k-2} + b_{n-1,k} + b_{n-2,k-1} - b_{n-2,k-2}-a_{n-3,k-2}.\]
    We claim that $b_{n,k} = a_{n-1,k-1}.$ We will prove this claim by strong induction on $n$. It is straightforward to check this claim for values of $n\leq 3$ so let us assume $n\geq 4$. Note that 
    \begin{align*}
        b_{n,k} &= 2a_{n-2,k-2} + b_{n-1,k} + b_{n-2,k-1} - b_{n-2,k-2}-a_{n-3,k-2} \\
         & = 2b_{n-1,k-1} + a_{n-2,k-1} + b_{n-2,k-1} - a_{n-3,k-3} - b_{n-2,k-1} \\
         & = a_{n-2,k-1} + 2b_{n-1,k-1} - a_{n-3,k-3} \\
         & = a_{n-1,k-1},
    \end{align*}
    where the last equality follow from the recurrence for $a_{n,k}$ above. 
    Notice that by taking $b_{n,k}=a_{n-1,k-1}$, we now obtain a recurrence for $a_{n,k}$ as follows: 
    \[
    a_{n,k}=a_{n-1,k} + 2a_{n-1,k-1} - a_{n-2,k-2},
    \]
    which together with the initial conditions is exactly the recurrence satisfied by $a_{n,k} = \binom{2n-2-k}{k}.$
\end{proof}

\subsection{132-avoiding shallow permutations with symmetry}

In this subsection, we consider 132-avoiding shallow permutations that are involutions (so that $\pi=\pi^{-1}$), that are centrosymmetric (so that $\pi=\pi^{rc}$), and that are persymmetric (so that $\pi=\pi^{rci}$).
\begin{theorem}
    For $n\geq 1$, the number of shallow, 132-avoiding involutions of length $n$ is $F_{n+1}$, where $F_{n+1}$ is the $(n+1)$-st Fibonacci number. 
\end{theorem}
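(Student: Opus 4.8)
The plan is to introduce $a_n$, the number of shallow $132$-avoiding involutions of length $n$, and to prove $a_n = a_{n-1} + a_{n-2}$ for all $n \ge 3$, together with $a_1 = 1$ and $a_2 = 2$. Since these are exactly the recurrence and initial data of $F_{n+1}$ (with $F_1 = F_2 = 1$), this yields the claim. The structural starting point is Lemma~\ref{lem: 132-ends-in-1}: for $n \ge 3$, every $\pi \in \mathcal{T}_n(132)$ has $\pi_n = n$, or $\pi_1 = n$, or $\pi_n = 1$. What makes the involution count clean is that, for $\pi = \pi^{-1}$, the alternative $\pi_n = 1$ is subsumed by $\pi_1 = n$: if $\pi_n = 1$ then $\pi(1) = \pi^{-1}(1) = n$. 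Hence for $n \ge 2$ every shallow $132$-avoiding involution of length $n$ lies in exactly one of the two disjoint families $\pi_n = n$ and $\pi_1 = n$, and I would count each.

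For $\pi_n = n$, I claim that deleting the trailing fixed point is a bijection onto the shallow $132$-avoiding involutions of length $n-1$. Shallowness passes both ways by Theorem~\ref{thm: shallowRecursive}, whose map $\pi^R$ here simply drops the last entry $n$. A $132$ pattern cannot use the last position when the entry there is maximal (nor can appending a new $n$ create one), so $132$-avoidance passes both ways. The involution property is clearly preserved under deleting a fixed point and under the reverse operation. So this family has size $a_{n-1}$.

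For $\pi_1 = n$, the observation above gives $\pi_n = 1$, so $\pi = (n)(\mu_1+1)(\mu_2+1)\cdots(\mu_{n-2}+1)(1)$ for a unique $\mu \in \mathcal{S}_{n-2}$ obtained by deleting the first and last entries and reducing. Lemma~\ref{lem: n1} gives $\pi \in \mathcal{T}_n$ if and only if $\mu \in \mathcal{T}_{n-2}$. The leading $n$ and trailing $1$ cannot participate in a $132$ occurrence (the former is too large to be its smallest entry, the latter too small to be its middle entry), so $132$-avoidance transfers. Finally $\mu$ is an involution if and only if $\pi$ is, since on positions $2,\dots,n-1$ the map $\pi$ is exactly $\mu$ shifted up by $1$, while $\pi_1 = n$ and $\pi_n = 1$ form the transposition $(1\ n)$. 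So this family has size $a_{n-2}$, giving $a_n = a_{n-1} + a_{n-2}$ for $n \ge 3$. Checking $a_1 = 1$ (only $1$) and $a_2 = 2$ (only $12$ and $\delta_2 = 21$, the latter shallow by Corollary~\ref{cor:decreasing}) completes the identification $a_n = F_{n+1}$.

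I do not expect a real obstacle: the content is the careful but routine verification that each of the two deletion/insertion maps simultaneously respects shallowness, $132$-avoidance, and being an involution. The one step that should be highlighted — and the reason this proof is much shorter than that of Theorem~\ref{thm: 132} — is the collapse of the trichotomy in Lemma~\ref{lem: 132-ends-in-1} to a dichotomy in the involution case, which removes the need for the auxiliary quantity (permutations ending in $1$) that drove the earlier argument.
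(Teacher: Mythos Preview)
Your proof is correct and follows essentially the same approach as the paper: both arguments use Lemma~\ref{lem: 132-ends-in-1} to obtain the trichotomy $\pi_n=n$, $\pi_1=n$, or $\pi_n=1$, observe that for involutions the last two alternatives coincide, and then invoke Theorem~\ref{thm: shallowRecursive} and Lemma~\ref{lem: n1} on the two resulting cases to obtain the Fibonacci recurrence $a_n=a_{n-1}+a_{n-2}$. Your write-up is somewhat more explicit in checking that $132$-avoidance and the involution property transfer under the two deletions, but the substance is the same.
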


\begin{proof}
    Let $i_n$ be the number of shallow, 132-avoiding permutations of length $n$ that are involutions. We will show that $i_n=i_{n-1}+i_{n-2}$ and with initial conditions $i_1=1$ and $i_2=2$, we have the Fibonacci sequence shifted by 1. 

    There are clearly $i_{n-1}$ shallow, 132-avoiding involutions of length $n$ with $\pi_n=n$ since adding the fixed point $n$ to the end of an involution in $\mathcal{T}_{n-1}(132)$ gives us a permutation that is still an involution, still avoids 132, and is still shallow by Theorem \ref{thm: shallowRecursive}.

    If $\pi\in\mathcal{T}_{n-1}(132)$ does not end in $n$, then by Theorem~\ref{lem: 132-ends-in-1}, $\pi_1=n$ or $\pi_n=1$. However, if $\pi$ is an involution, then one of these will imply the other. 
    Note that by Lemma \ref{lem: n1}, we can add an $n$ to the beginning and 1 to the end of an involution in $\mathcal{T}_{n-1}(132)$, and the resulting permutation is still shallow. Additionally the permutation still avoids 132 and is still an involution since we have only added the 2-cycle $(1,n)$. Thus there are $a_{n-2}$ involutions in $\mathcal{T}_n(132)$ beginning with $n$ and ending with 1. The recurrence, and thus the result, follows. 
\end{proof}

\begin{theorem}
    For $n\geq2$, the number of 132-avoiding shallow centrosymmetric permutations is $\lceil (n+1)/2\rceil$.
\end{theorem}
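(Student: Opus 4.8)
The plan is to write $c_n$ for the number of $132$-avoiding shallow centrosymmetric permutations of length $n$, establish the recurrence $c_n = c_{n-2} + 1$ for $n \ge 3$, and solve it against the base cases $c_1 = 1$ and $c_2 = 2$. The geometric fact that drives everything is that centrosymmetry forces the entries $n$ and $1$ into mirror-image positions: if $\pi_j = n$ then $\pi_{n+1-j} = n+1-n = 1$.

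First I would show that, for $n \ge 3$, a $132$-avoiding shallow centrosymmetric permutation $\pi$ must either begin with $n$ (equivalently, by centrosymmetry, end with $1$) or end with $n$ (equivalently begin with $1$), and that these two alternatives are mutually exclusive for $n \ge 2$. Indeed, if $\pi_j = n$ with $2 \le j \le n-1$, then Lemma~\ref{lem: 132-ends-in-1} forces $\pi_n = 1$, while centrosymmetry puts $1$ in position $n+1-j \le n-1$, a contradiction. Next I would treat the two surviving cases. For the permutations with $\pi_1 = n$ and $\pi_n = 1$: the operation $\pi \mapsto \red(\pi_2\cdots\pi_{n-1})$ (delete $n$ and $1$, then reduce) from Lemma~\ref{lem: n1} is a bijection from this class onto the centrosymmetric members of $\mathcal{T}_{n-2}(132)$ — it preserves $132$-avoidance by passing to a subsequence, preserves shallowness by Lemma~\ref{lem: n1}, and a one-line computation shows it preserves centrosymmetry; its inverse reintroduces an $n$ at the front and a $1$ at the back, neither of which can take part in a $132$ pattern, so the image stays inside our class. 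Hence this case contributes exactly $c_{n-2}$. For the permutations with $\pi_1 = 1$ and $\pi_n = n$: writing $\pi = 1 \oplus \rho' \oplus 1$ with $\rho' \in \S_{n-2}$, any inversion $\rho'_a > \rho'_b$ with $a < b$ would make positions $1, a{+}1, b{+}1$ of $\pi$ a $132$ pattern; so $132$-avoidance forces $\rho' = \mathrm{id}$ and hence $\pi = \mathrm{id}_n$, which is indeed centrosymmetric, $132$-avoiding, and shallow. Thus this case contributes exactly $1$, giving $c_n = c_{n-2}+1$, and a routine induction with $c_1 = 1$, $c_2 = 2$ yields $c_n = \lceil (n+1)/2 \rceil$.

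I do not expect a serious obstacle here; the argument is essentially a streamlined version of the recursive bookkeeping already used in the proof of Theorem~\ref{thm: 132}, and is in fact cleaner. The point needing the most care is verifying that the two cases are exhaustive and mutually exclusive for every $n$ in range — in particular Lemma~\ref{lem: 132-ends-in-1} is only stated for $n \ge 3$, so $n = 2$ must be handled directly as a base case (the two permutations $12$ and $21$) — together with pinning down the very small cases where $\rho'$ is empty or has length one. The one slightly non-obvious observation is how drastically $132$-avoidance restricts a permutation whose first entry is $1$: it forces all remaining entries to be increasing, which is exactly what collapses the ``ends with $n$'' case down to the single permutation $\mathrm{id}_n$ and makes the recurrence $c_n = c_{n-2}+1$ rather than $c_n = 2c_{n-2}$.
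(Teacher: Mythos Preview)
Your proposal is correct and follows essentially the same approach as the paper: both arguments establish the recurrence $c_n = c_{n-2} + 1$ by using Lemma~\ref{lem: 132-ends-in-1} to force $n$ into position $1$ or $n$, then invoking Lemma~\ref{lem: n1} for the $\pi_1 = n,\ \pi_n = 1$ case and showing the $\pi_n = n$ case yields only the identity. The only cosmetic difference is in the latter case: the paper observes that a centrosymmetric $132$-avoider also avoids $213 = 132^{rc}$ and hence a $213$-avoider ending in $n$ is increasing, whereas you use centrosymmetry to get $\pi_1 = 1$ and note directly that a $132$-avoider starting with $1$ is increasing --- these are dual versions of the same observation.
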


\begin{proof}
    Notice that if $\pi$ avoids 132 and $\pi=\pi^{rc}$, $\pi$ must also avoid $132^{rc}=213.$
    By Lemma~\ref{lem: 132-ends-in-1}, we know that either $\pi_n=n$, $\pi_n=1$, or $\pi_1=n$. However, if $\pi=\pi^{rc}$, then $\pi_1=n$ implies that $\pi_n=1.$ Therefore, either $\pi_n=n$ or $\pi_1=n$ and $\pi_n=1$. In the first case, since $\pi_n=n$ and $\pi$ avoids 213, $\pi$ is the increasing permutation. In the second case, by Lemma~\ref{lem: n1}, by deleting $n$ and 1, we obtain a shallow 132-avoiding centrosymmetric permutation of length $n-2.$ Letting $c_n$ be the number of centrosymmetric permutations in $\mathcal{T}_n(132)$, we thus have $c_n = c_{n-2}+1$, which together with the initial conditions that $c_1=1$ and $c_2=2$ implies the result. 
\end{proof}

\begin{theorem}
    Let $p_n(132)$ be the number of 132-avoiding shallow persymmetric permutations and let $P_{132}(x)$ be the generating function for $p_n(132)$. Then
    \[P_{132}(x) = \frac{1-x^2+2x^3}{(1-x)(1-2x^2-x^4)}.\]
\end{theorem}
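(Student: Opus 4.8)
The plan is to first pin down the structure of persymmetric $132$-avoiders, then decide shallowness on that structure, and finally package the count into generating functions.

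First I would show that a persymmetric $132$-avoiding permutation is exactly a skew sum of increasing runs $\iota_{j_1}\ominus\iota_{j_2}\ominus\cdots\ominus\iota_{j_m}$ (with $\iota_k=12\cdots k$), equivalently one avoiding \emph{both} $132$ and $213$, and that these are indexed by compositions $(j_1,\dots,j_m)$ of $n$. For the forward direction, $\pi=\pi^{rci}$ means $\pi^{-1}=\pi^{rc}$; since $132^{-1}=132$ and $132^{rc}=213$, a $132$-avoider equal to its own $rci$ must also avoid $213$, and a short argument (the entries before the value $n$ must be increasing, then recurse) shows the permutations avoiding both $132$ and $213$ are precisely those skew sums. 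Conversely each such skew sum is persymmetric, because reflection across the anti-diagonal carries each increasing run — a block of consecutive positions carrying consecutive values — onto itself, and by the same block argument it avoids $132$ and $213$. Thus $p_n(132)$ counts those compositions whose reverse-layered permutation is shallow.

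Next I would determine which compositions are shallow by repeatedly applying the right operator of Theorem~\ref{thm: shallowRecursive} and its mirror, Theorem~\ref{thm: shallowRecursiveL}. The empty and one-part compositions are shallow; if $j_1\ge2$ and $j_m\ge2$ the permutation contains a $3\boldsymbol{4}\boldsymbol{1}2$ pattern, so Theorem~\ref{thm: 3n12} rules it out; if $j_1=j_m=1$ then $\pi^R$ followed by $\pi^L$ strips the outer $n$ and outer $1$ and, after reduction, gives $1\oplus(\text{reverse-layered of }(j_2,\dots,j_{m-1}))$, which is shallow iff $(j_2,\dots,j_{m-1})$ is. The delicate case is the mixed one, $j_1\ge2$ and $j_m=1$ (and its mirror): here $\pi^R$ produces $\iota_{j_1-1}\ominus(1\oplus\rho')$, and tracking $R$ one further step forces $j_1\le2$ unless the composition has only two parts, while for $j_1=2$ and $m\ge3$ shallowness holds exactly when the cyclic shift (last entry to the front) of the inner reverse-layered permutation $\rho'$ is shallow. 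I would isolate this last condition as an auxiliary family of ``good'' compositions and analyze it by the same operators, obtaining a closed recursive description of both the shallow and the good compositions.

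Finally, I would pass to generating functions. Writing $q_n$ for the number of shallow persymmetric $132$-avoiders with $\pi_1=n$, Lemma~\ref{lem: 132-ends-in-1}, Lemma~\ref{lem: n1}, and the fact that the only $132$-avoider with $\pi_1=1$ is the identity give $p_n=1+2q_n-p_{n-2}$, while the $\pi_1=n$ case reduces via $\pi^R$ to the ``good'' count, giving $q_n=g_{n-1}$; the recursion for $g_m$ from the previous paragraph supplies a second generating-function equation, and eliminating $G(x)$ yields a rational function that simplifies to $\dfrac{1-x^2+2x^3}{(1-x)(1-2x^2-x^4)}$. The main obstacle is precisely the mixed boundary case: non-shallowness of a reverse-layered permutation is not a local condition on its composition (e.g.\ $(3,1,1)$ is bad while $(1,3,1,1)$ is fine), so the case analysis must transport enough boundary data — equivalently the ``good'' family — for the recursion to close, after which the bookkeeping needed to solve the resulting system and match the stated rational function must be carried out with care.
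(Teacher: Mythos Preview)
Your structural characterization is correct and is not something the paper makes explicit: a $132$-avoiding persymmetric permutation must also avoid $213=132^{rci}$, and the permutations avoiding both are exactly the reverse-layered ones $\iota_{j_1}\ominus\cdots\ominus\iota_{j_m}$, each of which is automatically fixed by $rci$. Your first recursion $p_n=1+2q_n-p_{n-2}$ is also valid; it is different from, but equivalent to, the paper's $p_n=2+q_n+q_{n-2}$.

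Where your plan drifts away from a complete proof is the ``good'' family. You define $g_m$ as the number of compositions $\alpha$ of $m$ for which the cyclic rotation of the reverse-layered permutation is shallow, and you assert that ``the recursion for $g_m$ from the previous paragraph'' closes the system. But the previous paragraph analyzes reverse-layered permutations, not their cyclic shifts; applying $R$ or $L$ to a cyclic shift does not stay in either class. A separate case analysis on the last part $a_k$ of $\alpha$ is needed: $a_k\ge 3$ forces a $\underline{3}41\underline{2}$ pattern (not good), $a_k=1$ reduces to $(a_1,\dots,a_{k-1})$ being shallow, and $a_k=2$ reduces (via one $L$ step) to $(a_1,\dots,a_{k-1},1)$ being shallow. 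Only after carrying this out do you get $g_m=1+p_{m-1}+g_{m-2}$, which is exactly the paper's $q$-recursion in disguise ($g_m=q_{m+1}$). None of this is in the proposal, and the remark that $(3,1,1)$ is bad while $(1,3,1,1)$ is fine shows you have not yet found the organizing principle.

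The paper sidesteps the detour entirely. Rather than leaving the class of persymmetric $132$-avoiders, it applies Lemma~\ref{lem: 132-ends-in-1} a second time to $q_n$ (viewed, via the inverse bijection, as the count of persymmetric $\pi\in\mathcal T_n(132)$ with $\pi_n=1$): the position of $n$ is then $1$, $n-1$, or some $2\le j\le n-2$, and in the last case persymmetry forces $j=2$, after which $\pi^{RL}$ is a persymmetric element of $\mathcal T_{n-2}(132)$ starting with its maximum. This yields $q_n=1+p_{n-2}+q_{n-2}$ immediately, with no auxiliary family needed. Your route can be made to work, but the missing $g$-recursion is real work, and it ultimately recovers exactly this relation; it would be cleaner to recurse on $q_n$ directly as the paper does.
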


\begin{proof}
    Let $n\geq 5$ and let $\pi\in\mathcal{T}_n(132)$ with $\pi=\pi^{rc}$. We use Lemma~\ref{lem: 132-ends-in-1} to determine a few possible cases. First, it $\pi_n=n$, since $\pi=\pi^{rc},$ we must have $\pi_1=1$, which implies that $\pi$ is the increasing permutation. If $\pi_{n-1}=n$, then by Lemma~\ref{lem: 132-ends-in-1}, we must have $\pi_n=1$. Since $\pi=\pi^{rc}$, then $\pi_1=2,$ which implies that $\pi=2345\ldots n1$ since $\pi$ is 132-avoiding. Note that this permutation is clearly shallow.  Next, consider a permutation where $\pi_j=n$ for some $2\leq j\leq n-2$. By Lemma~\ref{lem: 132-ends-in-1}, this permutation must end with 1 and start with $n-1$. But this implies that $\pi_2=n$ and so $\pi = (n-1)n\pi_3\ldots\pi_{n-1} 1$. Note that $\pi^{RL}$ can be obtained by  deleting $n$ and $1$ from $\pi.$ This permutation still is still shallow, still avoids 132, and still is persymmetric, and furthermore begins with $n$. If we let $q_n(132)$ be the number of persymmetric permutations in $\T_n(132)$ that begin with $n$, then we thus have \[ p_n(132) = 2+q_n(132)+q_{n-2}(132).\]
    Similarly considering those that end with $1$ (or equivalently start with $n$, since $\pi$ is persymmetric if and only if $\pi^{-1}$ is), we clearly have $p_{n-2}(132)$ permutations that start with $n$ and end with $1$ since removing these will leave a persymmetric shallow permutation avoiding 132. Considering the cases above that begin with $1$ listed above, we have \[q_n(132) = 1+p_{n-2}(132)+q_{n-1}(132).\] Letting $Q_{132}(x)$ be the generating function for $q_n(132)$, taking into account the initial conditions, we get 
    \[Q_{132}(x) = x+\frac{x^4}{1-x} + x^2P_{132}(x) + x^2Q_{132}(x)\] and 
    \[
    P_{132}(x) = 1+x^2+x^3 + \frac{2x^4}{1-x} + (1+x^2)Q_{132}(x).
    \]
    Solving for $Q_{132}(x)$, plugging the result into the equation for $P_{132}(x)$, solving for $P_{132}(x),$ and then simplifying gives the result in the statement of the theorem.
\end{proof}


\section{Shallow permutations that avoid 231 or 312}\label{sec:231}

In this section, we enumerate shallow permutations that avoid the pattern 231. We also consider the number of such permutations with a given number of descents, as well as those that exhibit certain symmetry. Note that by Proposition \ref{prop: invRc} $\mathcal{T}_n(231) = \mathcal{T}_n(312)$, and so Theorem~\ref{thm:231} holds for shallow permutations avoiding 312 as well.

\subsection{Enumeration of 231-avoiding shallow permutations}
 
\begin{theorem}\label{thm:231}
    Let $T_{231}(x) = \sum_{n\geq 0}t_n(231)x^n$ be the generating function for $t_n(231)$. Then,
    \[T_{231}(x) = \frac{1-3x+2x^2-x^3-x^4-x^5}{1-4x+4x^2-2x^3-x^4-x^5}.\]
\end{theorem}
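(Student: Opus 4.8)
The plan is to decompose a shallow $231$-avoiding permutation according to the position of $n$, exactly mirroring the strategy used for the $132$ case but now exploiting the structural constraints that $231$-avoidance imposes on the left operator $\pi^L$ rather than the right operator. Recall that a $231$-avoiding permutation $\pi$ has the block form $\pi = \alpha \ominus (\beta \oplus 1)$ is not quite right; more useful here is that if $\pi_j = n$ then every entry after position $j$ is smaller than every entry before position $j$, so $\pi = \tau \, n \, \rho$ where (after reduction) $\rho$ occupies the smallest values and $\tau$ the next block. I would first prove a structural lemma, analogous to Lemma~\ref{lem: 132-ends-in-1}: for $\pi \in \mathcal{T}_n(231)$, if $n$ is not in the first or last position then $\pi$ must begin with $1$ (and, in a sub-case, end with $2$), forced by Theorem~\ref{thm: shallowRecursiveL} since the element displaced by the left operator fails to be a left-to-right maximum or right-to-left minimum otherwise. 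The precise statement and which cases split off will need care, and getting it exactly matched to the claimed generating function is where most of the bookkeeping lives.

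Next I would set up the counting. Let $a_n = t_n(231)$ and let $b_n$ count those $\pi \in \mathcal{T}_n(231)$ with $\pi_1 = 1$; by Proposition~\ref{prop: invRc} (via inverse or reverse-complement) $b_n$ also counts those with $\pi_n = n$, or whichever symmetric variant is convenient. Using the structural lemma, partition $\mathcal{T}_n(231)$ by whether $\pi_1 = 1$, $\pi_n = n$, or neither; in the ``neither'' case $\pi$ begins with $1$ is false—rather $\pi$ starts with $n$ or $\pi_1=1$, so the genuine split is $\pi_1 = n$, $\pi_1 = 1$, $\pi_n = n$, with Lemma~\ref{lem: n1} handling the $\pi_1 = n, \pi_n = 1$ intersections. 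This yields a relation of the shape $a_n = a_{n-1} + 2b_n - (\text{correction})$. Then I would find a recurrence for $b_n$ by again locating $n$: when $n$ is in the last position we recurse on $\mathcal{T}_{n-1}$ with the same constraint; when $n$ is interior the structural lemma pins down additional entries and, after applying $\pi^L$ and possibly $\pi^R$, we delete a bounded number of entries to land in a smaller shallow $231$-avoiding class, contributing terms like $a_{n-2}, a_{n-3}, b_{n-1}, b_{n-2}$. Substituting the $b_n$ recurrence into the $a_n$ relation should produce a linear recurrence with characteristic polynomial $1 - 4x + 4x^2 - 2x^3 - x^4 - x^5$, matching the denominator.

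The final step is purely generating-function algebra: convert the linear recurrence with its initial conditions $a_0, a_1, \dots, a_5$ (computed directly, e.g. $a_1 = 1, a_2 = 2, a_3 = 5, a_4 = 13, \ldots$ adjusted for shallowness—these small values must be verified by hand or by the recursion of Theorem~\ref{thm: shallowRecursive}) into the stated rational function, checking that the numerator $1 - 3x + 2x^2 - x^3 - x^4 - x^5$ comes out correctly. I expect the main obstacle to be the structural lemma and the case analysis feeding $b_n$: unlike the clean $132$ situation where $231$-avoidance forces a rigid ``$1$ after $n$'' picture, here the interaction between $231$-avoidance and the two operators $\pi^L, \pi^R$ is subtler, and it is easy to miscount the boundary cases (short permutations, the case $j = 2$ versus $j \geq 3$, whether the displaced entry coincides with a forced extreme value). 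I would guard against this by cross-checking the resulting recurrence against brute-force enumeration of $t_n(231)$ for $n$ up to about $8$ before committing to the generating function.
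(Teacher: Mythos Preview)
Your approach has a real gap at the structural-lemma step. First, the description of $231$-avoidance is inverted: if $\pi_j = n$ in a $231$-avoiding $\pi$, then every entry \emph{before} position $j$ is smaller than every entry \emph{after} (not the reverse), so $\pi$ decomposes as a direct sum $\alpha \oplus \gamma$ with $\gamma$ beginning with its maximum. More importantly, the proposed lemma ``if $n$ is interior then $\pi_1 = 1$'' is false: take $\pi = 21354$, which is shallow, avoids $231$, has $n=5$ in position $4$, yet $\pi_1 = 2$; the same happens for $32154$. These counterexamples arise precisely from the direct-sum structure: any shallow $231$-avoider $\alpha$ can sit to the left of $n$, so no single forced value at $\pi_1$ or $\pi_n$ can cover all cases. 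Nor can you transport the $132$ argument by symmetry, since reverse alone does not preserve shallowness (only inverse, reverse--complement, and their composite do, by Proposition~\ref{prop: invRc}), and none of those sends $132$ to $231$.

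The paper's route is quite different from your sketch. It first exploits the direct-sum decomposition together with Lemma~\ref{lem: dir sum} to obtain $T_{231}(x) = 1/(1 - B(x))$, where $B(x)$ counts shallow $231$-avoiders with $\pi_1 = n$. Next, by locating $n-1$, it proves a convolution $B(x) = x + \tfrac{1}{x} B(x) C(x)$ where $C(x)$ counts those beginning with $n(n-1)$; this requires a direct verification that splicing two shallow pieces at the $n,(n-1)$ boundary preserves shallowness (Lemma~\ref{lem:bc-231}). Finally a case analysis (Lemma~\ref{lem:c-231}) yields the closed form $c_n = 3n - 11$ for $n \geq 5$, from which $C(x)$, $B(x)$, and $T_{231}(x)$ follow. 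The degree-$5$ denominator emerges from this three-layer factorization rather than from a single linear recurrence in $a_n$ and one auxiliary sequence $b_n$; had you carried out the brute-force check you mention, the discrepancy with your structural lemma would already appear at $n=5$.
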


We will prove this theorem via a series of lemmas. First, we prove that permutations of a particular form built from decreasing permutations are shallow.

\begin{lemma}\label{lem:shallow-n,n-1}
    If $\pi\in\S_n$ is of one of the permutations below:
    \begin{itemize}
        \item $21\ominus(\delta_j\oplus\delta_k)$, 
        \item $\delta_i\ominus(1\oplus\delta_k)$, or
        \item $\delta_i\ominus(\delta_j\oplus 1)$,
    \end{itemize} where $i,j,k\geq 0$, then $\pi$ is a shallow permutation.
\end{lemma}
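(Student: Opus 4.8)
The plan is to verify shallowness directly from the recursive characterization in Theorem~\ref{thm: shallowRecursive}, by induction on $n$, after first trimming the problem in two ways. Several parameter choices are degenerate and reduce to known facts: when $i=0$ the permutation is $1\oplus\delta_k$ or $\delta_j\oplus1$, shallow by Lemma~\ref{lem: dir sum} and Corollary~\ref{cor:decreasing}; when $k=0$ (resp.\ $j=0$) the second (resp.\ third) family is simply $\delta_{i+1}$; and when $j=0$ or $k=0$ the first family is $\delta_{j+2}$ or $\delta_{k+2}$. Moreover, the third family is redundant: from $(\alpha\ominus\beta)^{-1}=\beta^{-1}\ominus\alpha^{-1}$, $(\alpha\ominus\beta)^{rc}=\beta^{rc}\ominus\alpha^{rc}$, and $\delta_m^{-1}=\delta_m^{rc}=\delta_m$, one gets $\big(\delta_i\ominus(1\oplus\delta_k)\big)^{rci}=\delta_i\ominus(\delta_k\oplus1)$, so by Proposition~\ref{prop: invRc} the third family is shallow once the second is. Thus it suffices to treat $21\ominus(\delta_j\oplus\delta_k)$ and $\delta_i\ominus(1\oplus\delta_k)$ with $i,j,k\ge1$.

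In both remaining cases the largest values occupy the front: $\delta_i\ominus(1\oplus\delta_k)$ begins with $n$, and $21\ominus(\delta_j\oplus\delta_k)$ begins with $n,\,n-1$. Applying the right operator $R$ deletes the current last entry and places it on top of the current maximum. I would show that the result again has the same coarse shape --- a block of the one or two largest values, followed by one or two monotone runs on the remaining values --- and that the entry just relocated is always forced to be a left-to-right maximum or a right-to-left minimum: in almost every step it lands at the front (or immediately after the one or two entries still larger than it) and so is a left-to-right maximum, the one exception being the step in which the value $1$ is pushed into a position all of whose right neighbors are larger, making it a right-to-left minimum. Since $R$ strictly shortens the word, iterating reaches a base case --- a decreasing permutation, a direct sum of a short ``wrapped'' word with an identity, or $21$ --- each of which is readily seen to be shallow; unwinding the chain of equivalences ``$\pi$ is shallow iff $\pi^R$ is shallow'' then yields the statement.

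The main obstacle is purely combinatorial bookkeeping. The $R$-orbit of these permutations does not stay inside the three listed families, so the induction really needs to be carried on a slightly larger class of ``decreasing-dominated'' permutations that is genuinely closed under $R$, with the left-to-right-maximum / right-to-left-minimum condition verified throughout that class; alternatively one can follow the orbit by hand, at the cost of splitting on how $i$, $j$, $k$ compare and, in a few places, on their parities. A route that looks cleaner at first glance --- checking $I(\pi)+T(\pi)=D(\pi)$ via closed forms --- is not actually shorter: $D(\pi)$ and $I(\pi)$ are routine from the run structure, but $T(\pi)=n-\cyc(\pi)$ requires the cycle decomposition, whose number of cycles again depends on the relative sizes of $i$, $j$, $k$, so that approach fragments into the same subcases.
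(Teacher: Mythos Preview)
Your plan is sound and would eventually work, but it takes a noticeably harder road than the paper does, and the ``obstacle'' you flag is one the paper simply avoids rather than confronts.  The key difference is which operator to iterate.  You commit to $R$ alone; the paper uses $L$ (Theorem~\ref{thm: shallowRecursiveL}) and the composite $LR$.  For the first family, two applications of $L$ send $21\ominus(\delta_j\oplus\delta_k)$ directly to a permutation of the form $\delta_{j-2}\oplus\sigma$, a direct sum recognized as shallow in one stroke---no induction, no enlarged class.  For the second family the paper observes that
\[
\big(\delta_i\ominus(1\oplus\delta_k)\big)^{LR}=\delta_{i-1}\ominus(1\oplus\delta_{k-1}),
\]
so the family is \emph{closed} under $LR$ with both parameters dropping by one; the left-to-right-maximum and right-to-left-minimum checks are immediate, and the induction is two lines.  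The third family is handled identically via $LR$.  By contrast, applying $R$ to $\delta_i\ominus(1\oplus\delta_k)$ produces $2,(n-1),\ldots,(k+2),1,(k+1),\ldots,3$, which no longer has the ``block of largest values at the front'' shape you describe; tracking its orbit forces exactly the case-splitting you anticipate.

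Your reduction of the third family to the second via $\pi\mapsto\pi^{rci}$ is a pleasant shortcut the paper does not take.  What the paper's route buys, though, is that no ``slightly larger class'' is ever needed: with the right choice of operator the induction stays inside the stated families throughout.  If you want to complete your version, the cleanest fix is simply to swap $R$ for $LR$ in the second family and for $LL$ in the first; the bookkeeping then evaporates.
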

\begin{proof}
    For the first bullet point, notice that $\pi^{LL}(21\ominus(\delta_j\oplus\delta_k)) = \delta_{j-2} \oplus (12 \oplus \delta_{k})$ which is a direct sum of shallow permutations and is therefore shallow. Furthermore, $n'=n-1$ is a left-to-right maximum in $\pi^{L}(21\ominus(\delta_j\oplus\delta_k))$ and $(n-1)''=n-3$ is a left-to-right maximum in $\pi^{LL}(21\ominus(\delta_j\oplus\delta_k))$. Therefore, Theorem \ref{thm: shallowRecursiveL} implies the original permutation is shallow.

    We  prove the second and third bullet points by induction on the length of the permutation. Let us first consider the second bullet point, when $\pi = \delta_i\ominus(1\oplus\delta_k)\in\S_n$. If  $k=0$, then $\pi = \delta_{i+1}$ which is shallow, and if $i=0$, then $\pi$ is a direct sum of shallow permutations and thus is shallow. Therefore, let us consider the cases when $i,k \geq 1$. It is straightforward to check the base cases when $n\leq 3,$ so let us assume $n\geq 4.$ Notice that $\pi^L= (\delta_{i-1}\oplus 1)\ominus \delta_k$ and $\pi^{LR} = (\delta_{i-1} \ominus (1 \oplus \delta_{k-1}))$. Since $n'=n-1$ is a left-to-right maximum of $\pi^L$, $2'=1$ is a right-to-left minimum of $\pi^{LR}$, and $\pi^{LR}$ is shallow by induction, we conclude by Theorems \ref{thm: shallowRecursive} and \ref{thm: shallowRecursiveL} that $\pi$ is also shallow. The result follows by induction.
 
    An identical argument works for the third bullet point since $(\delta_i \ominus(\delta_j \oplus 1))^{LR} = (\delta_{i-1}\ominus(\delta_{j-1} \oplus 1)$.
\end{proof}

In order to enumerate $\T_n(231)$, we will decompose these permutations into a direct sum of two shallow permutations that avoid 231, one of which begins with $\pi_1=n$. In order to enumerate those permutations in $\T_n(231)$ that begin with $n$ we will decompose them further, enumerating them in terms of those that begin with $\pi_1\pi_2=n(n-1)$. 

\begin{lemma}\label{lem:bc-231}
    Suppose $b_n$ is the number of permutations in $\T_n(231)$ with $\pi_1=n$ and let $c_n$ be the number of permutations in $\T_n(231)$ with $\pi_1=n$ and $\pi_2=n-1.$ Then we have \[b_n=\sum_{i=1}^{n-1} b_ic_{n-i+1}.\]
\end{lemma}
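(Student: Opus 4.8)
The plan is to exhibit a bijection between the set of $\pi\in\mathcal{T}_n(231)$ with $\pi_1=n$ and $\bigsqcup_{i=1}^{n-1}\mathcal{B}_i\times\mathcal{C}_{n-i+1}$, where $\mathcal{B}_i$ is the set of permutations in $\mathcal{T}_i(231)$ starting with $i$ (so $|\mathcal{B}_i|=b_i$) and $\mathcal{C}_N$ is the set of permutations in $\mathcal{T}_N(231)$ starting with $N,N-1$ (so $|\mathcal{C}_N|=c_N$). Summing the resulting equality of cardinalities over $i$ gives the lemma.

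First I would pin down the shape forced by $231$-avoidance. Let $m$ be the position of the entry $n-1$; then $m\geq 2$. Viewing $n-1$ as the large entry of a $231$ pattern (its position can be preceded only by smaller entries, since $\pi_1=n\not<n-1$), one gets that every entry in positions $2,\dots,m-1$ is smaller than every entry in positions $m+1,\dots,n$; together with $\pi_1=n$, $\pi_m=n-1$ this forces $\pi=n\,A\,(n-1)\,B$, where $A$ is a $231$-avoiding permutation of $\{1,\dots,m-2\}$ in positions $2,\dots,m-1$ and $B$ is a $231$-avoiding permutation of $\{m-1,\dots,n-2\}$ in positions $m+1,\dots,n$. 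Set $i=m-1$, let $\beta=\red(\pi_1\cdots\pi_{m-1})=i\,A$, and let $\gamma=\red(\pi_1\pi_m\pi_{m+1}\cdots\pi_n)$. Then $\beta$ has size $i$, avoids $231$, and begins with its maximum, while $\gamma$ has size $N:=n-m+2=n-i+1$, avoids $231$, and begins with $N,N-1$ (coming from $n$ and $n-1$). The map is reversible: from $(\beta,\gamma)\in\mathcal{B}_i\times\mathcal{C}_{n-i+1}$ recover $A$ by deleting the first entry of $\beta$, recover $B$ by rescaling $\red(\gamma_3\cdots\gamma_N)$ into $\{i,\dots,n-2\}$, and form $\pi=n\,A\,(n-1)\,B$, which avoids $231$ precisely because $A$ and $B$ do, starts with $n$, and has $n-1$ in position $i+1$. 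So this is a bijection of the underlying sets; it remains to show it respects shallowness, i.e.\ $\pi$ is shallow iff both $\beta$ and $\gamma$ are.

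For that I would avoid the recursive operators entirely and show that the three statistics split additively over the decomposition:
\[
D(\pi)=D(\beta)+D(\gamma),\qquad I(\pi)=I(\beta)+I(\gamma),\qquad \cyc(\pi)=\cyc(\beta)+\cyc(\gamma)-1,
\]
the last giving $T(\pi)=T(\beta)+T(\gamma)$ since $|\beta|+|\gamma|=n+1$. Granting these, the Diaconis--Graham lower bound applied to $\beta$ and to $\gamma$ makes $D(\pi)-I(\pi)-T(\pi)=\bigl(D(\beta)-I(\beta)-T(\beta)\bigr)+\bigl(D(\gamma)-I(\gamma)-T(\gamma)\bigr)$ a sum of two nonnegative quantities, hence zero exactly when both summands vanish; that is precisely ``$\pi$ shallow $\iff$ $\beta$ and $\gamma$ shallow,'' and it also certifies that the reconstructed $\pi$ is shallow whenever $\beta,\gamma$ are. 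The additivity of $D$ and of $I$ is a direct block-by-block computation from $\pi=n\,A\,(n-1)\,B$: the displacement and the inversions internal to $A$ (resp.\ $B$) are unchanged in $\beta$ (resp.\ $\gamma$), and the remaining terms --- $n$ against everything to its right, and $n-1$ against $B$ --- reassemble exactly into the contributions of the prepended maximum of $\beta$ and of the two prepended maxima of $\gamma$ (one verifies the totals agree, e.g.\ both sides equal $2n-m-2$ plus the internal contributions for $D$).

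The step I expect to be the real obstacle is the cycle identity $\cyc(\pi)=\cyc(\beta)+\cyc(\gamma)-1$. Morally, position $1$ is a vertex shared by $\beta$ and $\gamma$, and the cycle of $\beta$ through position $1$ and the cycle of $\gamma$ through position $1$ are spliced into a single cycle of $\pi$ while every other cycle of $\beta$ and $\gamma$ survives intact; making this rigorous means tracking how $\pi$ reroutes the arcs at position $1$. Concretely, $\pi$ sends position $1$ to position $n$ (the last slot of $B$) whereas $\beta$ sends position $1$ to its last $A$-slot, and the entry equal to $1$ in $\gamma$ lies in the $B$-block and is sent by $\pi$ into the last $A$-slot rather than back to position $1$; one must check these two reroutings fuse exactly the two ``position-$1$'' cycles and disturb nothing else, paying attention to the subcase where $B$ contains its smallest value $m-1$ (so a rerouting lands inside the $A$-block) and to the degenerate sizes $m\in\{2,n-1,n\}$ where $A$ or $B$ is empty. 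An alternative is an induction via Theorem~\ref{thm: shallowRecursive}, peeling the entries of $B$ off the right end of $\pi$ one at a time and checking that each required left-to-right-maximum/right-to-left-minimum condition is guaranteed by $231$-avoidance; but that route must still feed the shallowness of $\gamma$ back in, so it is no shorter.
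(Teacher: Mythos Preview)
Your approach is essentially the same as the paper's: both decompose $\pi=n\,A\,(n{-}1)\,B$ at the position of $n-1$ and verify the additivity $D(\pi)=D(\alpha)+D(\beta)$, $I(\pi)=I(\alpha)+I(\beta)$, $\cyc(\pi)=\cyc(\alpha)+\cyc(\beta)-1$ directly from the definitions. The paper carries out the cycle identity you flag as the obstacle by an explicit computation: cycles of $\alpha$ (your $\beta$) not containing $1$ are cycles of $\pi$ verbatim, cycles of the second piece not containing $1$ survive after shifting, and the two cycles through $1$ splice into the single cycle $(1,n,b_3{+}m{-}1,\ldots,b_s{+}m{-}1,m{+}1,a_3,\ldots,a_r)$---so there is no hidden subtlety, and your worry about the case where $B$ contains its minimum is absorbed into this description. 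One point where you actually improve on the paper: the paper only argues that shallow $(\alpha,\beta)$ give shallow $\pi$, which by itself yields only $b_n\ge\sum b_ic_{n-i+1}$; your use of the Diaconis--Graham lower bound to write $D(\pi)-I(\pi)-T(\pi)$ as a sum of two nonnegative terms cleanly gives the ``iff'' and hence the equality the lemma claims.
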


\begin{proof}
    We will show that if $\alpha\in\T_m(231)$ satisfies $\alpha_1=m$ and $\beta\in\T_\ell(231)$ with $\beta_1=\ell$ and $\beta_2=\ell-1$ then the permutation $\pi$ with $\pi_1=m+\ell-1$, $\pi_i=\alpha_i$ for $2\leq i\leq m$ and $\pi_{m+j-1}=\beta_{j}+m-1$ for $2\leq j\leq \ell$. In other words, taking $n=m+\ell-1$, we have \[\pi=n\alpha_2\alpha_3\ldots \alpha_m(n-1)\beta_3'\beta_4'\ldots \beta_\ell'\] where $\beta'_i=\beta_i+m-1$ for $3\leq i\leq \ell$. Let us first see that this permutation is also shallow. 

    Note that since $\alpha$ and $\beta$ are shallow, we have that $I(\alpha) + m-\cyc(\alpha) = D(\alpha)$ and $I(\beta) + \ell-\cyc(\beta) = D(\beta)$. It will be enough for us to show that $I(\pi) + n-\cyc(\pi) = D(\pi)$. 

    First, notice that $I(\pi) = I(\alpha)+I(\beta).$ Indeed, if $(i,j)$ is an inversion of $\pi$ (so that $i<j$ and $\pi_i>\pi_j$), then we have a few cases to consider. If $1\leq i,j\leq m$, then $(i,j)$ is also an inversion of $\alpha$, and in fact, all inversions of $\alpha$ are counted this way. If $(1,j)$ is an inversion of $\pi$ with $m+1\leq j \leq n$, then $(1,j-m+1)$ is an inversion of $\beta$ (since $\pi_1=n$). If $(i,j)$ is an inversion of $\pi$ with $m+1\leq i,j\leq n$, then $(i-m+1,j-m+1)$ is an inversion of $\beta$. Furthermore, the previous two cases count all inversions of $\beta$. Finally, since $\pi_r<\pi_s$ for all $2\leq r\leq m$ and $m+1\leq s\leq n$, there are no other inversions of $\pi.$

    Next, let us show that $\cyc(\pi) =\cyc(\alpha)+\cyc(\beta)-1.$ Notice that any cycles of $\alpha$ that do not contain $1$ are still cycles of $\pi$ since their values and positions are unchanged. Similarly, all cycles of $\beta$ that do not contain $1$ correspond to cycles of $\pi$ with values scaled up by $m-1.$ Let $(1, m, a_3, \ldots,a_r)$ and $(1,\ell, b_3,\ldots,b_s)$ be the cycles in $\alpha$ and $\beta$, respectively, that contain $1.$ Then in $\pi$, we have the corresponding cycle $(1,n, b_3+m-1, \ldots, b_s+m-1, m+1, a_3,\ldots,a_r)$.

    Finally, let us consider displacement; we will see that $D(\pi) = D(\alpha)+D(\beta).$ Indeed we have \begin{align*}
        D(\pi) &= \sum_{i=1}^n|\pi_i-i| \\
        &=(n-1) + \sum_{i=2}^m|\pi_i-i| + \sum_{i=m+1}^n|\pi_i-i| \\
        & =(n-1) + \sum_{i=2}^m|\alpha_i-i| + \sum_{j=2}^\ell|\beta_j+m-1-(m+j-1)| \\
        & =(n-1) + D(\alpha) -(m-1) + D(\beta)-(\ell-1) \\
        & = D(\alpha)+ D(\beta),
    \end{align*}
    where the last equality holds since $n=m+\ell-1.$

    Taken altogether, we can see that \begin{align*}
        I(\pi) +n-\cyc(\pi) &= I(\alpha)+I(\beta) + (m+\ell-1) -(\cyc(\alpha)+\cyc(\beta)-1) \\
         &= I(\alpha)+m-\cyc(\alpha)+I(\beta) + \ell -\cyc(\beta) \\
        &= D(\alpha)+D(\beta) \\
        &=D(\pi). 
    \end{align*}
\end{proof}

\begin{remark}
    One could also use Berman and Tenner's characterization of shallow permutations in \cite{BT22} to prove Lemma \ref{lem:bc-231} by considering the cycle form of $\pi$. We opted for a different proof to avoid introducing additional terminology.
\end{remark}

\begin{lemma}\label{lem:c-231}
    Let $c_n$ be the number of permutations in $\T_n(231)$ with $\pi_1=n$ and $\pi_2=n-1.$ Then for $n\geq 5$, $c_n=3n-11$.
\end{lemma}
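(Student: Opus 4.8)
The plan is to prove that the permutations counted by $c_n$ are exactly the permutations of the three shapes appearing in Lemma~\ref{lem:shallow-n,n-1} that begin with $n(n-1)$, and then to count those by inclusion--exclusion. For the easy inclusion, note that every permutation $21\ominus(\delta_j\oplus\delta_k)$ begins with $n(n-1)$ (taking $n=j+k+2$), while $\delta_i\ominus(1\oplus\delta_k)$ and $\delta_i\ominus(\delta_j\oplus 1)$ begin with $n(n-1)$ precisely when $i\ge 2$. By Lemma~\ref{lem:shallow-n,n-1} all of these permutations are shallow, and each avoids $231$: a direct sum of decreasing permutations avoids $231$, and if $\gamma$ avoids $231$ then so does $\delta_i\ominus\gamma$, since a $231$ pattern in $\delta_i\ominus\gamma$ would have its largest entry — hence all three of its entries — inside the $\gamma$-block. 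So each such permutation is counted by $c_n$.

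For the converse, which is the main work, suppose $\pi\in\T_n(231)$ with $\pi_1=n$ and $\pi_2=n-1$, and write $\pi=n(n-1)\rho$, where $\rho=\pi_3\cdots\pi_n$ is a $231$-avoiding permutation of $[n-2]$. Since $\pi_1=n$, Theorem~\ref{thm: shallowRecursive} shows $\pi$ is shallow if and only if $\pi^R=\rho_{n-2}(n-1)\rho_1\cdots\rho_{n-3}$ is shallow, because the moved entry $\pi_n$ lands in the first position and is automatically a left-to-right maximum. The idea is then to iterate the right operator: at each stage the entry that moves must be a left-to-right maximum or a right-to-left minimum, and decoding these requirements in terms of $\rho$ — using the $231$-decomposition $\rho=L\,(n-2)\,R$, with $L$ a $231$-avoider of $\{1,\dots,|L|\}$ and $R$ a $231$-avoider of $\{|L|+1,\dots,n-3\}$ — forces $L$ and $R$ each to be (almost) decreasing and in one of a few prescribed relative configurations, so that $\rho$ equals $\delta_j\oplus\delta_k$, $\delta_a\ominus(1\oplus\delta_k)$, or $\delta_a\ominus(\delta_j\oplus 1)$; equivalently, $\pi$ has one of the three forms above. (Alternatively, one can route this through Theorem~\ref{thm: 3n12}: every iterate $\pi^{R^s}$ is shallow, hence avoids $3\boldsymbol4\boldsymbol12$ and $\underline3 41\underline2$, and translating what those forbid into conditions on $\rho$ yields the same restrictions.) This case analysis is where I expect the genuine difficulty to lie: no single step is deep, but the right operator preserves neither the shape ``$\pi_1=n,\pi_2=n-1$'' nor $231$-avoidance, so all the bookkeeping must be carried out directly on $\rho$, and several cases (according to which of $L,R$ is empty and where the entries $1$ and $2$ lie) must be handled.

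Finally, for the count, fix $n\ge5$ and let $F_1,F_2,F_3$ denote the sets of permutations of the three forms that begin with $n(n-1)$. Each has $n-2$ elements: $F_1=\{\delta_n\}\cup\{21\ominus(\delta_j\oplus\delta_{n-2-j}):1\le j\le n-3\}$; $F_2=\{\delta_i\ominus(1\oplus\delta_{n-1-i}):2\le i\le n-1\}$, whose members are distinct because the value $1$ occupies position $i+1$ (with $i=n-1$ recovering $\delta_n$); and similarly $|F_3|=n-2$. A short check shows that for $n\ge5$ each pairwise intersection is $\delta_n$ together with exactly one further permutation, namely $F_1\cap F_2=\{\delta_n,\;n(n-1)\,1\,(n-2)(n-3)\cdots 2\}$, $F_1\cap F_3=\{\delta_n,\;n(n-1)(n-3)(n-4)\cdots1\,(n-2)\}$, and $F_2\cap F_3=\{\delta_n,\;n(n-1)\cdots3\,1\,2\}$; moreover these three extra permutations are pairwise distinct (their third entries are $1$, $n-3$, and $n-2$), so $F_1\cap F_2\cap F_3=\{\delta_n\}$. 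Inclusion--exclusion then gives
\[
c_n=|F_1\cup F_2\cup F_3|=3(n-2)-3\cdot 2+1=3n-11,
\]
as claimed. (The hypothesis $n\ge5$ is needed only here: for $n=4$ the three extra permutations all collapse to a single permutation and the count is off.)
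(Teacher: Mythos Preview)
Your overall strategy matches the paper's: identify that the permutations counted by $c_n$ are exactly those of the three shapes in Lemma~\ref{lem:shallow-n,n-1} beginning with $n(n-1)$, and then count. The easy inclusion is fine, and your inclusion--exclusion computation is correct and tidier than the paper's organization (which instead splits on whether $\pi_3=n-2$ and counts $n-3$ permutations in the first case and $2(n-4)$ in the second).

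The genuine gap is the converse inclusion, which you explicitly defer. That step is not a formality: it is the entire content of the paper's proof, and your sketch does not yet pin down the argument. Two concrete points. First, your proposed tool --- iterating $R$ alone --- gives no information on the first step (since $\pi_1=n$, the moved entry lands in position~$1$ and is automatically a left-to-right maximum), and after $\pi^R$ the permutation is no longer $231$-avoiding nor of the shape $n(n-1)\cdots$, so the bookkeeping you warn about really does explode. The paper instead mixes $L$ and $R$: it applies $\pi^L$ first (placing $n$ at the position of $1$), then reads off a nontrivial constraint from $\pi^{LR}$, and for the case $\pi_3=n-2$ pushes further to $\pi^{LL}$ and $\pi^{LLRR}$, also invoking the $3\boldsymbol{4}\boldsymbol{1}2$ and $\underline{3}41\underline{2}$ obstructions of Theorem~\ref{thm: 3n12}. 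Second, the case split the paper uses is exactly the one your $231$-decomposition $\rho=L\,(n{-}2)\,R$ suggests --- it tracks the position of $n-2$ --- so your outline is pointed in the right direction, but until that analysis is actually carried out (showing $L$ and $R$ must each be decreasing except in the specific configurations that give the three families), the proof is incomplete.
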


\begin{proof} Let $n\geq 5$ and $\pi$ be a shallow permutation that avoids 231. 
Let us first consider permutations $\pi = n(n-1) \pi_3\ldots \pi_n$ so that $\pi_{k+3}=n-3$ for some $1\leq k \leq n-4$. Thus we have 
\[
\pi = n(n-1)\pi_3\ldots\pi_{k+2}(n-2)\pi_{k+4}\ldots \pi_n
\]
where $\{\pi_3,\ldots\pi_{k+2}\}=\{1,2,\ldots,k\}$ and $\{\pi_{k+4}, \ldots, \pi_n\}=\{k+1,\ldots,n-3\}$ since $\pi$ avoids 231. Furthermore, suppose $\pi_s=1$ for some $3\leq s\leq k+3$.

Notice that $\pi^L$ deletes $n$ from the beginning and replaces $\pi_s=1$ with the first element $n$ and re-sizes the elements, so that 
\[\pi^{L}=(n-2)(\pi_3-1)\ldots (\pi_{s-1}-1)(n-1)(\pi_{s+1}-1)\ldots (\pi_{k+2}-1)(n-3)(\pi_{k+4}-1)\ldots (\pi_n-1).  \] If the original permutation $\pi$ is shallow, then $\pi^L$ is as well since $n-1$ is necessarily a left-to-right maximum in a permutation in $\S_{n-1}$. Next, we find $\pi^{LR}=(\pi^L)^R$ by replacing $n$ in $\pi^L$ with the last element $(\pi_n-1)$ and deleting $(\pi_n-1)$ from the end. This cannot be a left-to-right maximum in $\pi^{LR}$ since $\pi^{LR}$ necessarily starts with its largest element. Notice it can only be a right-to-left minimum if $\pi_n$ is the smallest element among $\{\pi_{k+4}, \ldots, \pi_n\}$ and if the largest element in $\pi^L$ appeared after all elements smaller than the last element of $\pi^L$. In other words, $\pi_n=k+1$ and $\pi_{k+2}=1$. Since $\pi$ avoids 231, this implies that \[\pi = n(n-1)k(k-1)\ldots 21(n-2)(n-3)\ldots (k+1).\]
A similar argument proves that if $\pi$ ends with $n-2$, it must be of the form \[\pi = n(n-1)(n-3)(n-4)\ldots 21(n-2).\] 
Since by Lemma~\ref{lem:shallow-n,n-1}, these permutations are shallow,
this gives us $n-3$ shallow permutations $\pi$ that avoid $231$ and begin with $\pi_1\pi_2=n(n-1)$ with the property that $\pi_3\neq n-2$. Next we need to show that there are $2n-8$ such permutations with $\pi_3=n-2.$

Suppose that $\pi = n(n-1)(n-2)(n-3)\ldots (n-m) \pi_{m+2}\ldots \pi_{s-1}(n-m-1)\pi_{s+1}\ldots \pi_n$ for some $m\geq 3$ and $m+3\leq s\leq n$. We will show that there is one shallow permutation avoiding 231 with $s=m+3$ and one with $s=n.$
First suppose $\pi_n=n-m$. Then by the same argument above (i.e.~by considering the shallowness of $\pi^{LR}$), we must have that $\pi = n(n-1)\ldots (n-m)(n-m-2)\ldots 21(n-m).$
If $s=m+3$, then by the same argument as above, $\pi = n(n-1)\ldots (n-m)1(n-m-1)(n-m-2)\ldots 32.$
Note that by Lemma~\ref{lem:shallow-n,n-1}, these are both shallow permutations. 

Now for the sake of contradiction, suppose $m+3<s<n$. Then, by the argument above, $\pi = n(n-1)\ldots(n-m)k(k-1)\ldots 21(n-m-1)(n-m-2)\ldots (k+1)$ for some $k\geq 2.$ By considering $\pi^{LL}$, we get a permutation in $\S_{n-2}$ equal to 
\[\pi^{LL} = (n-2)'\ldots(n-m)'k'(k-1)'\ldots 3'(n-1)'n'(n-m-1)'(n-m-2)'\ldots (k+1)'\] where $j'=j-2$ for each element $3\leq j\leq n$.
Now taking $\pi^{LLR} = (\pi^{LL})^R$, we get
\[\pi^{LL} = (n-2)'\ldots(n-m)'k'(k-1)'\ldots 3'(n-1)'(k+1)'(n-m-1)'(n-m-2)'\ldots (k+2)'.\]
Finally, we consider $\pi^{LLRR}.$ First suppose $k<n-m-2$. Since $\pi^{LLRR}$ must start with its largest element $(n-2)'=n-4$, the element $(k+2)'=k$ must not be a left-to-right maximum. However, since it is to the left of $(k+1)'=k-1$ it is also not a right-to-left minimum and thus the permutation $\pi$ is not shallow. If $k=n-m-2$, then $\pi^{LLR}$ ends with $(n-m-1)'$, which is also smaller than $(n-2)'$ and larger than $(k+1)',$ and so will not be a left-to-right maximum or right-to-left minimum in $\pi^{LLRR}.$ Thus there are $2(n-4)$ shallow permutations avoiding 231 starting with $\pi_1\pi_2\pi_3=n(n-1)(n-2).$ 

Since we have a total of $n-3+2(n-4)$ shallow permutations that begin with $\pi_1\pi_2=n(n-1),$ the proof is complete. 
\end{proof}

We now have the tools necessary to prove the main theorem. 
\begin{proof}[Proof of Theorem~\ref{thm:231}]
    As above, suppose $b_n$ is the number of permutations in $\T_n(231)$ with $\pi_1=n$ and let $c_n$ be the number of permutations in $\T_n(231)$ with $\pi_1=n$ and $\pi_2=n-1.$ Let $B(x) = \sum_{n\geq1} b_nx^n$ and $C(x) = \sum_{n\geq 2} c_nx^n.$ 
    
    Since any $231$-avoiding permutation is the direct sum of a 231 avoiding permutation and a 231-avoiding permutation starting with $n$, we can use Lemma~\ref{lem: dir sum} to write that $t_n(231)=\sum_{i=0}^{n-1}t_i(231)b_{n-i}.$ Therefore, we have $T(x)=T(x)B(x)+1$.

    By Lemma~\ref{lem:bc-231}, we also have that $B(x) = \frac{1}{x}B(x)C(x)+x$.
    Finally, by Lemma~\ref{lem:c-231}, we know that for $n\geq 5,$ $c_n=3n-11$. Together with the fact that $c_2=1, c_3=1$, and $c_4=2,$ we have that 
    \[C(x) = x^4 + \frac{x^2}{1-x} + \frac{3x^5}{(1-x)^2}.\]
    Since $T(x) = \dfrac{1}{1-B(x)}$ and $B(x) =\dfrac{x}{1-\frac{1}{x}C(x)},$ the result follows.
\end{proof}

\subsection{231-avoiding shallow permutations by descent number}

We can refine the generating function in the previous section with respect to descents. Notice that since $312=231^{rc}$ and the reverse-complement preserves the number of descents, this result holds for 312-avoiding shallow permutations as well. 

For the purposes of this subsection, let $t_{n,k}(231)$ be the number of permutations in $\T_n(231)$ with $k$ descents, let $b_{n,k}$ be the number of such permutations that begin with $\pi_1=n$, and let $c_{n,k}$ be the number of such permutations that begin with $\pi_1\pi_2=n(n-1).$ Furthermore, let $T_{231}(x,t) = \sum t_{n,k}(231)x^nt^k$, $B(x,t) = \sum b_{n,k}x^nt^k$, and $C(x,t) = \sum c_{n,k}x^nt^k$.
\begin{theorem}
    \[C(x,t) = t^2x^4 + \frac{tx^2}{1-xt} + \frac{3t^3x^5}{(1-xt)^2}\]
     and 
     \[B(x,t) = \frac{x+C(x,t)-\frac{1}{t}C(x,t)}{1-\frac{1}{xt}C(x,t)} \]
     and finally, \[T(x,t) = \frac{1}{1-B(x,t)}.\]
\end{theorem}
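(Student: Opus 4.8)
The plan is to recompute each of the three identities from the proof of Theorem~\ref{thm:231}, carrying along the descent statistic $t$, and checking at each step that the bijections used there interact with descents in a controlled way. The key point throughout is that every construction in the earlier proofs either removes a fixed point (which changes neither length nor descent count), removes an $n$ from the front together with a $1$ from the end (which removes exactly the descent created by that $n$, hence drops $k$ by $1$), or forms a sum $\pi = n\alpha_2\cdots\alpha_m(n-1)\beta_3'\cdots\beta_\ell'$ as in Lemma~\ref{lem:bc-231}. So the main obstacle is a careful descent bookkeeping in this last gluing operation, and a re-derivation of the $c_{n,k}$ values.

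First I would redo Lemma~\ref{lem:c-231} tracking descents. Each of the explicit shallow families found there is a concatenation of decreasing runs, so its descent number is immediate: the permutation $n(n-1)k(k-1)\cdots21(n-2)(n-3)\cdots(k+1)$ has $n-3$ descents (everything is a descent except the single ascent $1\to(n-2)$), and similarly for the other forms. Summing over the admissible parameters as in the original proof, but now weighting by $t^{(\text{descents})}$, yields
\[
C(x,t) = t^2x^4 + \frac{tx^2}{1-xt} + \frac{3t^3x^5}{(1-xt)^2},
\]
where the $tx^2/(1-xt)$ term collects the $c_{2,1}=1$, $c_{3,1}=1$ (wait: $c_3$ corresponds to $n(n-1)(n-2)$... this must be checked) families and the $3t^3x^5/(1-xt)^2$ term collects the $c_n=3n-11$ permutations for $n\ge5$, each of which turns out to have exactly $n-2$ descents when $\pi_1\pi_2\pi_3 = n(n-1)(n-2)$ and one fewer otherwise. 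The precise matching of each explicit permutation in Lemma~\ref{lem:c-231} to its descent count is the routine-but-must-be-careful part; since every such permutation is a union of maximal decreasing runs, the descent number is (length)~$-$~(number of runs)~$+$~(number of runs)~$-1$, i.e.\ one less than the number of runs, and one reads off the run structure directly.

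Next I would redo Lemma~\ref{lem:bc-231} with descents. In the gluing $\pi = n\alpha_2\cdots\alpha_m(n-1)\beta_3'\cdots\beta_\ell'$ with $n=m+\ell-1$, the descents of $\pi$ are: the descents internal to $\alpha$ (the leading $n$ contributes the descent $n\to\alpha_2$, which is exactly the descent that $\alpha$'s own leading $m$ contributes, so $\alpha$'s descent count is preserved), plus the descents internal to the tail $\beta$ starting from its second entry, plus the descent at the boundary $\alpha_m \to (n-1)$ if $m\ge2$ (always, since $\alpha_m < n-1$) versus the descent $(n-1)\to\beta_3'$ which mirrors $\beta$'s descent $(\ell-1)\to\beta_3$. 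Carefully, one finds $\des(\pi) = \des(\alpha) + \des(\beta)$, so the convolution becomes $b_{n,k} = \sum_{i} \sum_{k'} b_{i,k'} c_{n-i+1, k-k'}$, which at the level of generating functions reads $B(x,t) = \tfrac{1}{xt}B(x,t)C(x,t) + xt$ — but the stated answer has a more complicated numerator, so in fact the bijection loses or creates descents in an $i$- or $\ell$-dependent way at the seam, and the correct bookkeeping gives
\[
B(x,t) = \frac{x + C(x,t) - \frac{1}{t}C(x,t)}{1 - \frac{1}{xt}C(x,t)}.
\]
I would pin down the extra $C(x,t) - \frac1t C(x,t)$ term by isolating the $i=1$ (respectively $\ell=2$) boundary cases of the convolution where the generic ``$\des$ adds'' rule fails by one, exactly as the $+x$ term in the undecorated $B(x)$ equation came from a boundary case. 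Finally, since any $231$-avoiding shallow permutation is a direct sum of a $231$-avoiding shallow permutation and one beginning with $n$ (Lemma~\ref{lem: dir sum}), and direct sums add descents (no descent is created at the junction because the first block ends below where the second begins), we get $T(x,t) = T(x,t)B(x,t) + 1$, i.e.\ $T(x,t) = \tfrac{1}{1-B(x,t)}$, completing the proof. The main obstacle, as flagged, is nailing the exact off-by-one descent corrections at the seams in the $B$-equation; everything else is a mechanical enrichment of the $t=1$ arguments already given.
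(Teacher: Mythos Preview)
Your outline is exactly the paper's approach --- re-run Lemmas~\ref{lem:c-231} and~\ref{lem:bc-231} while tracking descents --- but the crucial step, the descent bookkeeping in the $B$-recursion, is left unresolved, and your guess at it is wrong.

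Concretely: in the gluing $\pi = n\,\alpha_2\cdots\alpha_m\,(n-1)\,\beta_3'\cdots\beta_\ell'$ with $|\alpha|=m\ge 2$, the position $m$ in $\pi$ carries the pair $\alpha_m \to (n-1)$, and since $\alpha_m \le m-1 < n-1$ this is an \emph{ascent}, not a descent as you wrote. Meanwhile $\beta$'s first descent $\ell \to \ell-1$ disappears because those two values are no longer adjacent in $\pi$ (they sit at positions $1$ and $m+1$). Every other descent of $\alpha$ and of $\beta$ survives intact. Hence
\[
\des(\pi) \;=\; \des(\alpha) + \des(\beta) - 1 \qquad \text{whenever } m\ge 2,
\]
uniformly --- there is no $i$- or $\ell$-dependent correction. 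The degenerate case $m=1$ just gives $\pi=\beta$. Separating these two cases yields
\[
B(x,t) \;=\; x \;+\; C(x,t) \;+\; \frac{1}{xt}\,C(x,t)\bigl(B(x,t)-x\bigr),
\]
where the lone $x$ is the permutation $1\in\S_1$, the lone $C(x,t)$ is the $m=1$ contribution, and the $\tfrac{1}{xt}$ accounts for the overlap in length ($m+\ell-1$) and the lost descent. Solving this for $B(x,t)$ gives exactly the stated formula. So the ``main obstacle'' you flagged is in fact a one-line observation once you correct the ascent/descent confusion at position $m$.

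A smaller slip in your $C(x,t)$ paragraph: the permutation $n(n-1)k(k-1)\cdots 21(n-2)\cdots(k+1)$ has a single ascent (at $1\to n-2$) and hence $n-2$ descents, not $n-3$. More to the point, every permutation produced in Lemma~\ref{lem:c-231} is either $\delta_n$ (with $n-1$ descents) or has exactly one ascent (so $n-2$ descents); this immediately gives the coefficient of $x^n$ in $C(x,t)$ as $t^{n-1}+(c_n-1)t^{n-2}$ for $n\ge 5$, which sums to the stated closed form.

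Your argument for $T(x,t)=1/(1-B(x,t))$ is correct as written.
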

\begin{proof}
    We first note that by the proof of Lemma~\ref{lem:c-231}, shallow permutations that avoid 231 and begin with $\pi_1\pi_2=n(n-1)$ must either be the decreasing permutation or have at most one ascent. It follows that for each $n$, the coefficient of $x^n$ in $C(x,t)$ must be the polynomial $(3n-10)t^{n-2} + t^{n-1}$ for $n\geq 5.$ It follows that
   \[C(x,t) = t^2x^4 + \frac{tx^2}{1-xt} + \frac{3t^3x^5}{(1-xt)^2}.\]
   Next, by the proof of Lemma~\ref{lem:bc-231}, permutations in $\T_n(231)$ that start with $n$ are built from smaller permutations: $\alpha$ that starts with $n$ and $\beta$ that starts with $n(n-1).$ When the $\alpha$ is at least size 2, we have that $\des(\pi) = \des(\alpha)+\des(\beta) -1$ since the first descent in $\beta$ is lost in this process. Therefore, we get that 
     \[B(x,t) = x+C(x,t) +\frac{1}{xt}C(x,t)(B(x,t)-x). \]
     Finally, the number of descents in the direct sum of two permutations is the sum of the number of descents in each summand. Therefore $T(x,t) = T(x,t)B(x,t)+1$.
\end{proof}

\subsection{231-avoiding shallow permutations with symmetry}

In this subsection, we consider those 231-avoiding shallow permutations that exhibit certain symmetries. In particular, we enumerate 231-avoiding shallow involutions, in which $\pi=\pi^{-1}$, 231-avoiding shallow centrosymmetric permutations, in which $\pi=\pi^{rc},$ and 231-avoiding shallow persymmetric permutations, in which $\pi=\pi^{rci}.$ 
We show that in fact all 231-avoiding involutions and centrosymmetric permutations are shallow, but this same result does not hold for persymmetric permutations. 

\begin{theorem}
    For $n\geq 1$, the number of shallow, 231-avoiding involutions of length $n$ is $2^{n-1}$. 
\end{theorem}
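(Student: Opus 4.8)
The plan is to prove two things: first, that every $231$-avoiding involution is automatically shallow, and second, that the number of $231$-avoiding involutions of length $n$ is $2^{n-1}$. The combinatorial count is the easier half, so I would dispatch it via the standard structural decomposition of $231$-avoiding permutations. Recall that any $\pi \in \S_n(231)$ decomposes as $\pi = (\alpha \oplus 1) \ominus \beta$ where... actually, more useful here: a $231$-avoiding permutation is determined by where $n$ sits, and everything before $n$ uses the largest available values. For involutions this is quite rigid: if $\pi_j = n$ then $\pi_n = j$ by the involution property, and $231$-avoidance forces $\{\pi_1,\dots,\pi_{j-1}\}$ to be a top block. I would set $f_n$ to be the count and show $f_n = 2 f_{n-1}$ by conditioning on whether $\pi_n = n$ (contributing $f_{n-1}$ via deleting the fixed point) or $\pi_n \neq n$; in the latter case I expect to show the structure collapses to: $\pi$ begins with $n$ and ends with $1$ (the $2$-cycle $(1,n)$), with a $231$-avoiding involution of length $n-2$ in between, again contributing $f_{n-1}$ after accounting correctly — or alternatively condition on the position of $1$ and use that $\pi$ restricted appropriately is again such an involution. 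The cleaner route is: $\pi \in \S_n(231)$ is an involution iff it is a direct sum of "anti-identity blocks" of the form $\delta_k$ (decreasing runs), i.e. $\pi = \delta_{k_1} \oplus \delta_{k_2} \oplus \cdots \oplus \delta_{k_r}$ with $\sum k_i = n$; such compositions of $n$ number exactly $2^{n-1}$. I would verify this characterization directly: $231$-avoiding involutions are exactly sums of decreasing permutations.

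Granting that characterization, the shallowness half becomes immediate from the tools already in the excerpt: each $\delta_{k_i}$ is shallow by Corollary~\ref{cor:decreasing}, and a direct sum of shallow permutations is shallow by Lemma~\ref{lem: dir sum}. So the whole theorem reduces to the single structural claim that $231$-avoiding involutions are precisely direct sums of decreasing permutations, together with the elementary fact that such direct sums are indexed by compositions of $n$ and hence number $2^{n-1}$.

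So the key steps, in order, are: (1) prove that if $\pi \in \S_n(231)$ and $\pi = \pi^{-1}$, then $\pi = \delta_{k_1} \oplus \cdots \oplus \delta_{k_r}$ for some composition $(k_1,\dots,k_r)$ of $n$ — this I would do by induction on $n$, looking at the position $j$ of $n$: the involution property gives $\pi_n = j$, then $231$-avoidance forces $\pi_1 \cdots \pi_{j-1}$ to consist of the values $\{n-j+2,\dots,n-1\}$ wait, I need to be careful with indices, so the first $j$ entries form a block on the top $j$ values and (being an involution on that block, recursively $231$-avoiding) must themselves decompose, and symmetry pins down that the block on positions $1..j$ is exactly $\delta_j$ with $j = n$ forced unless the block splits — the induction handles the recursion on the complement; (2) observe conversely every such direct sum of decreasing permutations is a $231$-avoiding involution (decreasing blocks are involutions, avoid $231$, and direct sums preserve both); (3) count: direct sums of decreasing permutations of total length $n$ correspond bijectively to compositions of $n$, of which there are $2^{n-1}$; (4) conclude shallowness from Corollary~\ref{cor:decreasing} and Lemma~\ref{lem: dir sum}.

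The main obstacle is step (1), getting the structural characterization exactly right — in particular verifying that within a $231$-avoiding involution the entry $n$ at position $j$ really forces the prefix of length $j$ to be $\delta_j$ as a standalone block, rather than something subtler; one has to use the involution condition both at the top (position of $n$) and be sure no "linking" cycle of length $\geq 3$ can occur while staying $231$-avoiding, which is where a $231$ or a non-decreasing pattern would be forced. Everything downstream — the count $2^{n-1}$ and the appeal to the earlier shallowness lemmas — is routine.
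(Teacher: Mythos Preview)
Your proposal is correct and follows essentially the same approach as the paper: both arguments hinge on the characterization of $231$-avoiding involutions as direct sums $\delta_{k_1}\oplus\cdots\oplus\delta_{k_r}$ of decreasing permutations, then invoke Corollary~\ref{cor:decreasing} and Lemma~\ref{lem: dir sum} for shallowness and count compositions of $n$ for the $2^{n-1}$. The only difference is that the paper cites Simion--Schmidt \cite{SS85} for that structural characterization and the count, whereas you propose to prove it directly by induction (a cleaner version of your step~(1), incidentally, is to track the position $j$ of $1$ rather than $n$: $231$-avoidance forces $\pi_1>\cdots>\pi_j=1$, and the involution condition $\pi_1=j$ then pins the prefix to $\delta_j$ exactly).
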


\begin{proof}
    In \cite{SS85}, Simion and Schmidt show there are $2^{n-1}$ involutions of length $n$ that avoid 231. In their proof, it is shown that each of these permutations is a direct sum of decreasing permutations, i.e., $\pi = \bigoplus_{i=1}^k \delta_{m_i}$ for some composition $(m_i)_{i=1}^k$ of $n$. Since the decreasing permutation is always shallow, as is the direct sum of shallow permutations by Lemma \ref{lem: dir sum}, all 231-avoiding involutions are shallow.
\end{proof}

\begin{theorem}
    For $n\geq 1$, the number of shallow, 231-avoiding centrosymmetric permutations of length $n$ is $2^{\lfloor n/2\rfloor}$. 
\end{theorem}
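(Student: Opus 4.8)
The plan is to mimic the structure of the preceding proof about 231-avoiding involutions, but first I need to understand the structure of 231-avoiding centrosymmetric permutations. I would begin by recalling the well-known structural decomposition of 231-avoiding permutations: $\pi$ avoids $231$ if and only if $\pi = (n) \ominus \text{(nothing)}$... no wait. Actually a $231$-avoiding permutation has the form where $\pi^{-1}(n)$ splits the permutation so everything before $n$ is smaller than everything after. Let me instead use that $\pi$ avoids $231$ iff it can be written recursively by the position of $1$, or equivalently as a direct sum of ``plus-indecomposable'' $231$-avoiders. The cleanest route: use the characterization that $231$-avoiding permutations are exactly those of the form $\alpha \oplus (n \ominus \beta)$-type blocks, but the most useful known fact here is that a $231$-avoiding permutation $\pi$ fixed under $rc$ (which is an involution on $S_n$) has a constrained form.

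So concretely, first I would establish: if $\pi$ is $231$-avoiding and centrosymmetric, then $\pi$ avoids both $231$ and $231^{rc} = 312$. A permutation avoiding both $231$ and $312$ is a \emph{layered}-type permutation — in fact avoiding $\{231, 312\}$ means every element is either a left-to-right maximum followed appropriately, and such permutations are exactly direct sums of decreasing permutations? Let me check: avoiding $312$ and $231$ — the permutations avoiding both are known to be the $2^{n-1}$ permutations that are direct sums of decreasing runs (reverse layered permutations). Actually, permutations avoiding $\{231, 312\}$ are exactly $\bigoplus_i \delta_{m_i}$, direct sums of decreasing permutations. I would cite or quickly verify this. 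Then, by Lemma~\ref{lem: dir sum} and Corollary~\ref{cor:decreasing}, every such permutation is automatically shallow — so the shallowness constraint is vacuous, just as in the involution case.

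Next I would count the centrosymmetric ones among $\bigoplus_{i=1}^k \delta_{m_i}$. The reverse-complement of $\bigoplus_{i=1}^k \delta_{m_i}$ is $\bigoplus_{i=1}^k \delta_{m_{k+1-i}}$ (reverse-complement reverses the block order and fixes each decreasing block, since $\delta_m^{rc} = \delta_m$). So centrosymmetry is equivalent to the composition $(m_1, \ldots, m_k)$ being a palindrome. I would then count palindromic compositions of $n$: if $k = 2\ell$ is even, a palindrome is determined by $(m_1,\ldots,m_\ell)$, a composition of $n/2$ (so $n$ must be even); if $k = 2\ell+1$ is odd, it is determined by $(m_1,\ldots,m_\ell)$ together with a central part $m_{\ell+1} \geq 1$, i.e. a composition of some value with the middle part having the same parity as $n$. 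Summing, the number of palindromic compositions of $n$ is known to be $2^{\lfloor n/2 \rfloor}$: for $n$ even it is $2^{n/2-1} + 2^{n/2-1} = 2^{n/2}$ (even-length palindromes ↔ compositions of $n/2$, of which there are $2^{n/2-1}$; odd-length with even central part $2j$ ↔ compositions of $(n-2j)/2$...), and a short generating-function or bijective argument nails the total. I would present this as a clean lemma: the number of palindromic compositions of $n$ equals $2^{\lfloor n/2 \rfloor}$, proved by the standard bijection folding a palindrome in half.

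The main obstacle, I expect, is pinning down and cleanly justifying the structural claim that $231$-and-$312$-avoiding permutations are precisely direct sums of decreasing permutations — this needs a short self-contained argument (e.g., if $\pi_1 = m$ then $\pi_1 \pi_2 \cdots$ forces $m, m-1, \ldots$ in the first $m$ positions to avoid a $231$ with $\pi_1$ as the ``2'' and to avoid a $312$ with $\pi_1$ as the ``3'', after which the rest is again such a permutation on $\{m+1,\ldots,n\}$), and then the palindrome-counting step, which is routine but should be stated carefully. Everything else — shallowness being automatic, $rc$ acting by block-reversal — is immediate from the lemmas already in the paper.
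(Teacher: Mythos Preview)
Your proposal is correct and follows essentially the same approach as the paper: both arguments identify that 231-avoiding centrosymmetric permutations are exactly the direct sums $\bigoplus_i \delta_{m_i}$ indexed by palindromic compositions of $n$, observe that such permutations are automatically shallow by Corollary~\ref{cor:decreasing} and Lemma~\ref{lem: dir sum}, and conclude with the count $2^{\lfloor n/2\rfloor}$. The only difference is that the paper outsources the structural characterization and the enumeration to Egge~\cite{E07}, whereas you give a self-contained argument via the observation that centrosymmetry forces avoidance of $312=231^{rc}$ as well, reducing to the classical fact that $\{231,312\}$-avoiders are direct sums of decreasing blocks.
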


\begin{proof}
    In \cite{E07}, Egge shows there are $2^{\lfloor n/2\rfloor}$ centrosymmetric permutations of length $n$ that avoid 231. In his proof, it is shown that each of these permutations is a direct sum of decreasing permutations, i.e., $\pi = \bigoplus_{i=1}^k \delta_{m_i}$ for a palindromic composition $(m_i)_{i=1}^k$ of $n$. Since the decreasing permutation is always shallow, as is the direct sum of shallow permutations by Lemma \ref{lem: dir sum}, all 231-avoiding centrosymmetric permutations are shallow.
\end{proof}

\begin{theorem}
    For $n\geq 1$, if the number of shallow, 231-avoiding persymmetric permutations of length $n$ is $p_n(231)$ and the corresponding generating function is $P_{231}(x)$, then 
    \[P_{231}(x) =  \frac{x^{10} + 2 x^8 + x^7 + x^6 - x^5 - 2 x^4 + x^3 + 2 x^2 - x - 1}{x^{10} + x^8 + 2 x^6 - 4 x^4 + 4 x^2 - 1}.\]
\end{theorem}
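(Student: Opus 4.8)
The plan is to mimic the structure of the enumeration of 231-avoiding shallow permutations in Theorem~\ref{thm:231}, but now restricting to persymmetric permutations, i.e.\ those satisfying $\pi = \pi^{rci}$. The key structural observation remains that every 231-avoiding permutation decomposes as $\pi = \alpha \oplus \beta$ where $\beta$ begins with its largest element; however, the persymmetry constraint $\pi = \pi^{rci}$ couples the two summands. Since $\pi^{rci}$ reflects the diagram about the anti-diagonal, and a direct sum $\alpha \oplus \beta$ behaves under $rci$ roughly as $\beta^{rci} \oplus \alpha^{rci}$ (one should check the exact statement: $(\alpha\oplus\beta)^{rci} = \beta^{rci}\oplus\alpha^{rci}$), persymmetry forces the last summand $\beta$ to be determined by the first summand $\alpha$, unless $\pi$ is itself ``centrally located''. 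So I would first classify persymmetric 231-avoiders by their outermost block structure: either $\pi = \delta_m \oplus \gamma \oplus \delta_m$ with $\gamma$ a shorter persymmetric 231-avoider (after reduction), or $\pi$ is a single indecomposable persymmetric block. This should give a recursion of the form $p_n(231) = (\text{something}) + \sum (\text{smaller } p)$, from which a rational generating function can be extracted.

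Next I would handle the indecomposable case, which is where shallowness enters nontrivially (recall that, unlike for involutions and centrosymmetric permutations, not all persymmetric 231-avoiders are shallow, so the count is genuinely smaller than the full persymmetric 231-avoiding count). Here I would invoke the machinery of Lemma~\ref{lem:shallow-n,n-1} and the $\pi^L$, $\pi^{LR}$, $\pi^{LLRR}$ analysis from the proof of Lemma~\ref{lem:c-231}: an indecomposable 231-avoiding permutation that begins with $n$ has a fairly rigid shape (a decreasing prefix $n(n-1)\cdots(n-m)$ followed by a 231-avoiding arrangement of the remaining values with a single ``$1$'' somewhere), and shallowness pins down exactly which of these survive, typically leaving $O(n)$ of them. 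Intersecting this shape constraint with the persymmetry constraint $\pi = \pi^{rci}$ (which, since $\pi$ begins with $n$, forces $\pi$ to end with $1$ and imposes a palindromic-type condition on the interior) should cut the family down to a small, explicitly describable set whose generating function is a rational function with denominator a power of $(1-x)$ times a simple quadratic in $x^2$. I would then assemble the pieces: set up generating functions $P_{231}(x)$ for all persymmetric shallow 231-avoiders and an auxiliary $Q(x)$ for those that begin with $n$ (these are related by the block recursion and by the observation that $\pi \mapsto \pi^{-1}$ preserves the class, as used repeatedly earlier), write the two linear equations relating $P_{231}$ and $Q$ coming from (i) the outer-block recursion and (ii) the indecomposable classification, solve the linear system, and simplify to obtain the stated formula.

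The main obstacle I anticipate is the interaction between the persymmetry involution and the $L/R$ reduction operators: $\pi^{rci}$ does not commute with $\pi^R$ or $\pi^L$ in an obvious way, so the careful shallowness analysis of Lemma~\ref{lem:c-231} (which tracks $\pi^L$, $\pi^{LR}$, $\pi^{LL}$, $\pi^{LLRR}$) cannot simply be quoted — it must be redone with the extra symmetry hypothesis imposed, and one has to verify at each stage that the persymmetry is compatible with the forced entry positions. A secondary nuisance is bookkeeping the small cases (roughly $n \le 9$ or $10$, given the degree-$10$ denominator): the generating function has several low-order correction terms, so the initial conditions must be computed by hand and matched against the recursion before one can be confident the rational function is correct. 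Once the shape of the indecomposable persymmetric shallow 231-avoiders is nailed down, though, the remaining generating-function manipulation is routine.
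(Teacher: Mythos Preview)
Your overall strategy---direct-sum decomposition respecting persymmetry, then a separate analysis of the indecomposable (i.e.\ $\pi_1=n$) case using the machinery of Lemmas~\ref{lem:bc-231} and~\ref{lem:c-231}, then a linear system in two generating functions---is exactly the paper's. But two concrete steps in your outline are wrong or incomplete and would derail the computation.

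First, the outer-block decomposition is not $\delta_m\oplus\gamma\oplus\delta_m$. You yourself note that $(\alpha\oplus\beta)^{rci}=\beta^{rci}\oplus\alpha^{rci}$; following that through, a decomposable persymmetric $\pi\in\T_n(231)$ is $\gamma\oplus\nu\oplus\gamma^{rci}$ where $\gamma$ is an \emph{arbitrary} indecomposable shallow 231-avoider (so $\gamma_1=|\gamma|$, counted by the $B(x)$ of Theorem~\ref{thm:231}) and $\nu$ is again persymmetric. The outer blocks are not forced to be decreasing: e.g.\ $4132$ is an indecomposable shallow 231-avoider with $(4132)^{rci}=4213\neq 4132$, and both occur as outer blocks. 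So the outer-block contribution is $B(x^2)P_{231}(x)$, not a sum over $\delta_m$'s.

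Second, the indecomposable persymmetric case (your $Q(x)$, the paper's $P^B(x)$) is not a finite explicit list with an $O(n)$ count; it needs its own recursion. The paper again splits on the position of $n-1$: if $\pi_1\pi_2=n(n-1)$ one checks directly from the list in Lemma~\ref{lem:c-231} which permutations are persymmetric (only the decreasing one, $\delta_{n-2}\ominus 12$, and for even $n$ the permutation $21\ominus(\delta_{n/2-1}\oplus\delta_{n/2-1})$), giving an explicit rational $P^C(x)$. If $\pi_i=n-1$ with $i>2$, persymmetry forces the suffix $\pi_i\cdots\pi_n$ to be the $rci$ of the prefix $n\pi_2\cdots\pi_{n-i+2}$, and peeling these paired pieces off (via the Lemma~\ref{lem:bc-231} decomposition) leaves a shorter persymmetric permutation still beginning with its max. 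This yields $P^B(x)=x+P^C(x)+\tfrac{1}{x^2}C(x^2)P^B(x)$, with $C(x)$ as in Theorem~\ref{thm:231}. Your proposal treats the whole indecomposable case as ``explicitly describable'' after intersecting with persymmetry, which misses this internal recursion and the appearance of $C(x^2)$; without it you cannot close the system.
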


\begin{proof}
    Let $P^B(x)$ be the generating function for shallow 231-avoiding persymmetric permutations that begin with $n$ and $P^C(x)$ be the generating function for those beginning with $n(n-1)$. Then, since the only 231-avoiding shallow permutations that begin with $n(n-1)$ (of the form described in Lemma~\ref{lem:c-231}) are the decreasing permutation $\pi = n(n-1)\ldots 21,$ the permutations $\pi = n(n-1)\ldots 4312$, and when $n$ is even, the permutation $21\ominus(\delta_{n/2-1}\oplus\delta_{n/2-1}).$ Therefore for $n\geq 6,$ there are 2 such permutations when $n$ is odd and 3 such permutations when $n$ is even, giving us \[P^C(x) = x^2+x^3+\frac{2x^4}{1-x} + \frac{x^6}{1-x^2}.\]
    For those permutations beginning with $n$, if $\pi_i=n-1$ for $i>2$, then we must have that $\pi_i\pi_{i+1}\ldots \pi_n$ are composed of the numbers $\{i-1,i,\ldots,n-1\}$ and is order-isomorphic to the reverse-complement-inverse of $\pi_2\pi_3\ldots \pi_{n-i+2}$ which is composed of the elements in $\{1,2,\ldots, m-1\}.$ The remaining permutation is itself a shallow 231-avoiding persymmetric permutation beginning with $n.$ Thus, we have that \[P^B(x) = x+ P^C(x) +\frac{1}{x^2}C(x^2)P^B(x)\] where $C(x)$ is the generating function given in the proof of Theorem~\ref{thm:231}. Finally, if a given persymmetric permutation in $\T_n(231)$ does not begin with $n$, it is the direct sum $\gamma\oplus\nu\oplus\gamma^{rci}$ where $\nu$ is a shallow 231-avoiding persymmetric permutation and $\gamma$ is any shallow 231-avoiding permutation beginning with $n$. Thus, \[P_{231}(x) = 1+P^B(x) + B(x^2)T'(x)\] where $B(x)$ is the generating function given in the proof of Theorem~\ref{thm:231}. The result follows. 
\end{proof}

\section{Shallow permutations that avoid 123}\label{sec:123}

In this section, we consider those shallow permutations that avoid the pattern 123, as well as those that exhibit the three symmetries of inverse, reverse-complement, and reverse-complement-inverse. We omit the enumeration of 123-avoiding shallow permutations with a given number of descents, though this is likely tractable (but tedious) by following the proof of Theorem~\ref{thm: 123} below. 

\subsection{Enumeration of 123-avoiding shallow permutations}



Let us start by stating the main theorem in this section. 
\begin{theorem}\label{thm: 123}
Let $T_{123}(x)$ be the generating function for the number of shallow permutations that avoid 123. Then, 
    \[
T_{123}(x) =\frac{1-3x+11x^3-13x^4+7x^5+6x^6+3x^7}{(1-x)^4 (1 - 4 x^2 + x^4)}.
\]
\end{theorem}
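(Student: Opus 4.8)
The plan is to mimic the structure that worked for the $231$ case: decompose a $123$-avoiding shallow permutation according to the position of its largest element $n$, and set up a system of generating-function equations. A $123$-avoiding permutation has no increasing subsequence of length $3$, so its diagram is very constrained — it is a union of two decreasing subsequences. In particular, writing $\pi_j = n$, the entries before position $j$ and after position $j$ each form a decreasing-ish structure, and the interaction with shallowness is governed by Theorems~\ref{thm: shallowRecursive} and \ref{thm: shallowRecursiveL} together with the forbidden mesh patterns of Theorem~\ref{thm: 3n12}. First I would prove a structural lemma, analogous to Lemma~\ref{lem: 132-ends-in-1} and Lemma~\ref{lem:c-231}, pinning down the possible shapes: roughly, that a $123$-avoiding shallow $\pi$ either ends in $n$ (with $\pi^R$ shallow $123$-avoiding), or has $n$ in position $j<n$ in which case the right-operator analysis forces $\pi_n$ to be $1$ or very close to extremal, and iterating $R$ (or $L$) forces $\pi$ into a short list of families built from decreasing runs, shifted blocks, and the building blocks $\delta_i \ominus (1\oplus\delta_k)$ etc.\ already shown shallow in Lemma~\ref{lem:shallow-n,n-1} (and its reverse-complement, since $123 = 321^{rc}$... actually $123^{rc}=123$, so the $rc$-symmetry class is just $123$ itself, which gives an extra self-duality to exploit).

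Next I would isolate the permutations beginning with $n$, say counted by $b_n$ with generating function $B(x)$, and among those the ones beginning with $n(n-1)$, counted by $c_n$ with generating function $C(x)$, exactly paralleling the $231$ argument. The hope is that $c_n$ is eventually a quasi-polynomial (the denominator $(1-x)^4(1-4x^2+x^4)$ in the target suggests a quasi-polynomial of degree $3$ in $n$ twisted by a period-related-to-$1-4x^2+x^4$ factor, whose reciprocal roots are $\pm\sqrt{2\pm\sqrt 3}$ — so there is a genuinely non-rational-recurrence-of-small-order flavor, consistent with $123$-avoiding shallow permutations being more complicated than $231$-avoiding ones). I would then write $\pi$ with $\pi_j=n$ as essentially a ``merge'' of a piece beginning with $n$ and a tail, giving a convolution identity $b_n = \sum (\cdots) c_{(\cdots)}$ or similar, hence $B(x) = x + \tfrac1x B(x)C(x)$ or a variant, and finally $T_{123}(x) = 1/(1-B(x))$ from the fact (not quite the direct-sum fact here, since $123$-avoiders aren't closed under $\oplus$ of arbitrary pieces the same way — I would instead decompose by the block containing position $1$) that every $123$-avoiding shallow permutation splits uniquely after the ``first decreasing run reaching down to $1$.''

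The main obstacle, I expect, is the case analysis in the structural lemma: unlike $231$-avoidance, where the constraint that values before $n$ are all below values after $n$ gives a clean two-block picture, a $123$-avoiding permutation's two decreasing subsequences can interleave in position, so the iterated application of $R$ and $L$ produces more branches, and one must carefully verify in each branch both that the shallowness condition (the moved entry is a left-to-right max or right-to-left min) holds and that no new branch escapes the classification. Concretely, I anticipate needing sub-lemmas of the form ``if $\pi \in \T_n(123)$ and $\pi_1 \ne n$ and $\pi_n \ne 1$ then $\pi$ is one of the following explicit families,'' proved by showing any other configuration forces a $3\boldsymbol4\boldsymbol1 2$ or $\underline3 41\underline2$ pattern (Theorem~\ref{thm: 3n12}) or a non-extremal moved element. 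Once the families are enumerated — each a decreasing permutation with one or two inserted/shifted blocks, giving generating functions that are rational with denominators dividing $(1-x)^k$ or $(1-x^2)$ — assembling $C(x)$, then $B(x)$ via the quadratic-in-$B$ relation, then $T_{123}(x) = 1/(1-B(x))$, and simplifying to the stated closed form is routine algebra that I would leave to a computer algebra check rather than grind through by hand. The appearance of $1-4x^2+x^4$ in the denominator is the signal that the $b_n$ (equivalently the block-merge recursion) genuinely mixes the even and odd parts and is not reducible to a single short linear recurrence, which is the qualitative reason this pattern is harder than $132$ or $321$ (both Fibonacci) and comparable in spirit to $231$.
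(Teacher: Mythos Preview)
Your proposal has the right instincts in one place but a genuine structural gap in another.

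\textbf{What works.} You correctly anticipate the key sub-lemma: classifying $\pi\in\T_n(123)$ with $\pi_1\ne n$ and $\pi_n\ne 1$ into explicit families, proved by showing any other configuration forces a $3\boldsymbol4\boldsymbol12$ or $\underline3 41\underline2$. This is exactly Lemma~\ref{lem: 123-1} in the paper, which gives the closed form $2\binom{n-1}{3}+(n-1)$ for that count.

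\textbf{Where it breaks.} The assembly $T_{123}(x)=1/(1-B(x))$ cannot be made to work, and your hedge (``decompose by the block containing position $1$,'' ``splits uniquely after the first decreasing run reaching down to $1$'') does not rescue it. A $123$-avoider $\pi'\oplus\pi''$ with both parts nonempty is $123$-avoiding only if both parts are decreasing, so there is no useful multiplicative block decomposition of $\T_n(123)$ to hang a geometric series on. Likewise there is no natural convolution $b_n=\sum b_i c_{n-i+1}$ here: the merge argument of Lemma~\ref{lem:bc-231} depended on the $231$-avoider's ``values before $n$ all below values after $n$'' block structure, which is absent in the $123$ case.

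\textbf{What the paper does instead.} Rather than a product decomposition, the paper uses inclusion--exclusion on the events $\{\pi_1=n\}$ and $\{\pi_n=1\}$: by the $rc$-symmetry of $123$ both have size $b_n$, their intersection has size $t_{n-2}(123)$ by Lemma~\ref{lem: n1}, and the complement is your anticipated sub-lemma. This gives
\[
t_n(123)=2b_n-t_{n-2}(123)+2\binom{n-1}{3}+(n-1).
\]
The companion recurrence for $b_n$ is obtained not via a $c_n$ counting ``starts $n(n-1)$'' but by refining to $b_n(j)=\#\{\pi\in\T_n(123):\pi_1=n,\ \pi_j=1\}$ and doing a further case analysis on the positions of $2$ and $n-1$ (Lemma~\ref{lem: 123-2}); summing over $j$ yields a linear relation between $B(x)$ and $T_{123}(x)$, and the two linear equations are then solved. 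So the system is linear, not quadratic in $B$ as you propose, and the denominator $(1-x)^4(1-4x^2+x^4)$ emerges from that linear solve rather than from a $B=x+\tfrac1x BC$-type equation.
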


We first establish a few lemmas based on the position of $n$ and $1$ in the permutation. In Lemma~\ref{lem: 123-1}, we consider those permutations that do not start with $n$ or end with 1, and in Lemma~\ref{lem: 123-2}, we consider those that do start with $n$ and have a 1 in any other position.

\begin{lemma}\label{lem: 123-1}
    For $n\geq 3$, the number of 123-avoiding shallow permutations with $\pi_1\neq n$ and $\pi_n\neq 1$ is equal to $\displaystyle2\binom{n-1}{3} + (n-1).$
\end{lemma}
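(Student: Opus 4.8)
The plan is to classify $\pi\in\mathcal{T}_n(123)$ with $\pi_1\neq n$ and $\pi_n\neq 1$ according to the positions of $n$ and $1$, using the avoidance and shallowness constraints heavily. Since $\pi$ avoids $123$, it is a union of two decreasing subsequences; moreover, everything before $n$ (in a position $j$ with $j\geq 2$) must lie below everything after position $j$ is too strong, but concretely: if $\pi_j=n$ with $j\geq 2$, then $\pi_1,\dots,\pi_{j-1}$ is decreasing, $\pi_{j+1},\dots,\pi_n$ is decreasing, and by $123$-avoidance each $\pi_i$ with $i<j$ exceeds each $\pi_k$ with $k>j$ except that a single value could interleave — I would pin this down first as a structural lemma. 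Symmetrically, if $\pi_k=1$ with $k\leq n-1$, the entries before and after position $k$ are each decreasing. The first step is therefore to record these decreasing-run constraints and deduce that $\pi$ is essentially determined by: the position $j$ of $n$, the position $k$ of $1$, and which "small" block sits where.

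The second step is to impose shallowness via Theorems~\ref{thm: shallowRecursive} and \ref{thm: shallowRecursiveL}, together with Theorem~\ref{thm: 3n12}. In particular, since $\pi_1\neq n$ and $\pi_n\neq 1$, if $n$ is not in the last position then applying $\pi^R$ moves $\pi_n$ into the slot of $n$, and this entry must be a left-to-right maximum or right-to-left minimum; iterating, one forces strong order conditions, and the forbidden patterns $3\boldsymbol{4}\boldsymbol{1}2$ and $\underline{3}41\underline{2}$ rule out the "interleaving" configurations, leaving only a short list of shape families. I expect the surviving permutations to fall into a handful of parametrized families of the form $\delta_i\ominus(\,\cdots\,)$ or direct/skew sums of decreasing blocks — exactly the kinds of permutations shown shallow in Lemma~\ref{lem:shallow-n,n-1} and Corollary~\ref{cor:decreasing}. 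So the job reduces to (a) listing these families and (b) counting them, with care about overlaps (e.g. the fully decreasing permutation $\delta_n$, or cases where $n$ is second-to-last or $1$ is second, which are boundary instances of a family).

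The third step is the enumeration itself. I would split into the case $\pi_n=n$ (then delete the fixed point and recurse, but the complementary constraint $\pi_1\neq n$ still applies to the shorter permutation — this should be handled by an auxiliary count) versus $\pi_j=n$ with $2\leq j\leq n-1$, and similarly condition on the position of $1$. Each family should contribute a binomial-type count in $j$, $k$, or block sizes; summing a term like $\sum (\text{length of a decreasing block})$ over admissible configurations is what produces $\binom{n-1}{3}$ (a sum of $\binom{\cdot}{2}$'s), the factor $2$ coming from the two symmetric roles (position of $n$ before vs. after, or via the $\pi\mapsto\pi^{rc}$ symmetry of Proposition~\ref{prop: invRc}), and the linear term $(n-1)$ collecting the degenerate/boundary permutations counted once. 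The main obstacle will be step two: correctly identifying the complete list of shallow shapes and, especially, avoiding double-counting at the boundaries between families — I would verify the final formula against small $n$ (say $n=3,4,5$) by brute force as a sanity check before committing to the bookkeeping.
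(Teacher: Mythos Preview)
Your plan is headed in the same direction as the paper's proof---case on the relative positions of $n$ and $1$, use Theorem~\ref{thm: 3n12} to pin down structure, then count---but it is missing the one observation that makes the count clean, and one of your proposed steps would take you off course.

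The missing observation: in a $123$-avoiding permutation, if $1$ occurs before $n$ then they must be \emph{adjacent}, since any entry strictly between them would complete a $123$. The paper uses this immediately: with $\pi_i=1$ and $\pi_{i+1}=n$, the two flanking runs $\pi_1\cdots\pi_{i-1}$ and $\pi_{i+2}\cdots\pi_n$ are decreasing, and a single application of the $\underline{3}41\underline{2}$ avoidance (in $\pi^{RL}$) forces $\pi_{n-1}>\pi_2$ whenever $\pi_1>\pi_n$. That pins $\pi$ down completely from the choice of $i$, $\pi_1$, $\pi_n$, giving $(n-i-1)(i-1)+1$ permutations for each $i$ and $\binom{n-1}{3}+(n-1)$ in total. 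In the other case ($\pi_i=n$, $\pi_j=1$ with $1<i<j<n$), the paper shows there are exactly $j-i$ shallow permutations for each pair $(i,j)$---again by one use of $3\boldsymbol{4}\boldsymbol{1}2$-avoidance in $\pi^{RL}$---summing to $\binom{n-1}{3}$.

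Two places your plan would stumble: first, the recursion you propose on $\pi_n=n$ is unnecessary and would complicate things, since both $\pi_n=n$ and $\pi_1=1$ are already boundary instances ($i=n-1$ and $i=1$) of the ``$1$ before $n$'' case. Second, your structural claim that when $\pi_j=n$ ``each $\pi_i$ with $i<j$ exceeds each $\pi_k$ with $k>j$ except that a single value could interleave'' is not a consequence of $123$-avoidance; in the paper it is \emph{shallowness} (specifically, that $\pi_n$ must become a left-to-right maximum in $\pi^R$) that forces $\pi_n>\pi_1$ and hence the block structure, and the outcome is a one-parameter family for each $(i,j)$, not a single exceptional interleave.
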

\begin{proof}
    Let us first consider the case when $\pi_i=n$ and $\pi_j=1$ for some $1<i<j<n.$ We will see that there are $j-i$ such 123-avoiding shallow permutations. In particular, these $j-i$ permutations are of the form 
    \[\pi=\underline{(t-1)\ldots(t-i+1)}\, n\, \underline{(t-i)\ldots 2} \,\underline{(n-1) \ldots (t+n-j)}\, 1\, \underline{(t+n-j-1)\ldots t}\] for any $i+1\leq t\leq j$ where the underlined regions are decreasing. 

    We will first show that $\pi$ is shallow. Let us consider the permutation $\pi^{R^{n-t}}$. Since upon each iteration of the right operator, the last element replaces the largest element, all elements that appear before $n-1$, except for $n$, will remain unchanged. Each time, a term will be deleted, leaving us with \[\pi^{R^{n-t}} = \underline{(t-1)\cdots(t-i+1)}t\underline{(t-i)\cdots 21} \in \S_t.\] For example, if $\pi = 493287165$, we have $n = 9$ and $t = 5$, so $\pi^{R^4} = 45321$. In the first step, $t$ is a left-to-right maximum in $\pi^R$, and in all the subsequent steps the element we move is a right-to-left minimum in its new position. Furthermore, $\pi^{R^{n-t}} = (\delta_{i-1} \oplus 1) \ominus \delta_{t-i}$ is shallow by an identical argument to Lemma \ref{lem:shallow-n,n-1}. These two facts in combination with Theorem \ref{thm: shallowRecursive} imply that $\pi$ is shallow.

    Now let us see that these are indeed the only shallow 123-avoiding permutations with $\pi_i=n$ and $\pi_j=1$ for some $1<i<j<n.$ Indeed, since $\pi$ avoids 123, we must have $\pi_1\ldots \pi_{i-1}$ and $\pi_{j+1}\ldots \pi_n$ are decreasing. Furthermore, by considering $\pi^R,$ we would have that $\pi_n$ is to the left of 1 and thus must be a left-to-right maximum, implying that $\pi_n>\pi_1,$ which in turn implies that each element of $\{\pi_1,\ldots,\pi_{i-1}\}$ is less than each element of $\{\pi_{j+1},\ldots,\pi_n\}$. This implies that if $\pi_r=2$ then either $r=i-1$ or $i<r<j$. Clearly if $\pi_{i-1}=2$, then the subsequence $\pi_{i+1}\ldots\pi_{j-1}\pi_{j+1}\ldots \pi_n$ is decreasing and thus is of the above form with $t = i+1$. Similarly, if $\pi_s = n-1$, then either $s=j+1$ or $i<s<j$. If $\pi_{j+1}=n-1,$ then $\pi$ must be of the form above with $t=j.$ We can thus assume $i<r,s<j$. If $r<s$, then it is of the form above, so for the sake of contradiction, suppose $r>s$ (so, suppose 2 appears after $n-1$). However, in this case, $\pi^{RL}$ contains the subsequence $\pi_n'(n-1)'2'\pi_1'$ which is a $3\boldsymbol{4}\boldsymbol{1}2$ pattern, contradicting Theorem~\ref{thm: 3n12}. 

    Next, let us consider those permutations with $1$ appearing before $n$ in $\pi.$ Since $\pi$ avoids 123, it must be that $\pi=\pi_1\ldots \pi_{i-1}1n\pi_{i+2}\ldots \pi_n$ for $1\leq i \leq n-1$. Furthermore, we must have $\pi_1 > \pi_2 > \cdots > \pi_{i-1}$ and $\pi_{i+2} > \pi_{i+3} > \cdots > \pi_n$.

    We claim that if $\pi_i = 1$ and $\pi_{i+1} = n$, then either $\pi_1 < \pi_n$ in which case
    \[
    \pi = i(i-1) \cdots 1n(n-1)(n-2) \cdots (i+1),
    \]
    or $\pi_1 > \pi_n$. Since, the elements preceding $1$ are decreasing and those after $n$ are decreasing, we must have that $\pi_1 \in [i+1,n-1]$, $\pi_n \in [2,i]$. Furthermore, we can show that $\pi_{n-1} > \pi_2$. For the sake of contradiction, suppose not. Then $\pi_1 > \pi_2 > \pi_{n-1} > \pi_n$. But then $\pi^{RL}$ contains the sequence $\pi_2'\pi_1'\pi_n'\pi_{n-1}'$ which is a $\underline{3}41\underline{2}$ pattern, contradicting Theorem~\ref{thm: 3n12}.
    Thus once $i,$ $\pi_1$, and $\pi_n$ are selected, the rest of the permutation is determined.


    
    
    So in total for each $1 \leq i \leq n-1$ there are $(n-i-1)(i-1)$ permutations with $\pi_1 > \pi_n$ and 1 with $\pi_1<\pi_n.$ Summing over all possible values of $i$, we obtain
    \[
    \sum_{i=1}^{n-1} (1+(n-i-1)(i-1)) = \binom{n-1}{3} + (n-1)
    \]
    total permutations with $1$ appearing before $n$. 

    Altogether, there are $\sum_{j=3}^{n-1} \sum_{i=2}^{j-1} (j-i) = \binom{n-1}{3}$ permutations with $n$ appearing before $1$ and $\binom{n-1}{3} + (n-1)$ permutations where $1$ appears before $n$. Adding these gives us the result.
\end{proof}

Let $b_n$ be the number of permutations in $\T_n(123)$ that start with $\pi_1=n$ and let $b_n(j)$ be the number of such permutations that also have $\pi_j=1.$ Note that by considering the reverse-complement, we have that $b_n$ is also the number that end with $\pi_n=1$ and $b_n(j)$
 is also the number with $\pi_{n-j+1}= n$ and $\pi_n=1$. \begin{lemma}\label{lem: 123-2}
For $n\geq 5,$ we have $b_n(2) = 1$, $b_{n}(n-1) = b_n(n) = t_{n-2}(123)$, and for $3\leq  j\leq n-2$ we have 
\[
b_n(j) = 2 + (n-j-2)(2j-5) + 4\binom{j-2}{2} + b_{n-2}(j-1).
\]
\end{lemma}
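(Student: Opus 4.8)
The plan is to analyze, for a shallow $123$-avoiding permutation $\pi$ with $\pi_1 = n$ and $\pi_j = 1$, the structure forced by $123$-avoidance together with the mesh-pattern obstructions of Theorem~\ref{thm: 3n12} and the recursive characterization of shallowness via the $L$ and $R$ operators. Since $\pi$ avoids $123$ and $\pi_1 = n$, the entries $\pi_2\ldots\pi_{j-1}$ (those between $n$ and $1$) and the entries $\pi_{j+1}\ldots\pi_n$ (those after $1$) each split into at most two decreasing runs, and in fact $123$-avoidance forces each of $\pi_2\cdots\pi_{j-1}$ and $\pi_{j+1}\cdots\pi_n$ to be a union of two decreasing sequences with a controlled interleaving of value ranges. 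First I would handle the small cases: $b_n(2) = 1$ is immediate since $\pi = n1\tau$ with $\tau$ order-isomorphic to a $123$-avoiding shallow permutation on $\{2,\ldots,n\}$ starting right after the forced pattern, and $123$-avoidance plus shallowness pins it down to the unique permutation $n1(n-1)(n-2)\cdots 2$ (one checks shallowness using Lemma~\ref{lem:shallow-n,n-1}). For $j = n-1$ and $j = n$, applying $\pi^R$ (when $\pi_j = 1$ with $j = n$) or noting the last entry is a fixed-point-like reduction, we delete $n$ from the front and $1$ from position $n-1$ or $n$, and by Lemma~\ref{lem: n1} and Theorem~\ref{thm: shallowRecursive} this is a bijection with $\T_{n-2}(123)$, giving $b_n(n-1) = b_n(n) = t_{n-2}(123)$.

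The bulk of the argument is the formula for $3 \le j \le n-2$. Here I would set up the decomposition: write $\pi = n\, \alpha\, 1\, \beta$ where $\alpha = \pi_2\cdots\pi_{j-1}$ has length $j-2$ and $\beta = \pi_{j+1}\cdots\pi_n$ has length $n-j$. Using $123$-avoidance, $\alpha$ is a union of two decreasing runs and so is $\beta$; moreover the relative order of the value-sets of $\alpha$ and $\beta$ is constrained because $n$ sits at the front and $1$ sits between them. The key move is to apply the left operator: $\pi^L$ replaces the $1$ in position $j$ by the (resized) largest value and deletes the leading $n$, so $\pi^L$ is a shallow $123$-avoiding permutation of length $n-1$ whose structure we can recurse on; tracking which new permutation has its $1$ in position $j-1$ yields the term $b_{n-2}(j-1)$ after one further application (i.e. after $\pi^{LL}$, or after combining $\pi^L$ with an $R$-step). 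The remaining "base" contributions — the explicit count $2 + (n-j-2)(2j-5) + 4\binom{j-2}{2}$ — should come from enumerating the permutations $\pi$ that do \emph{not} reduce further under this recursion, i.e. those where after peeling $n$ and $1$ one lands at a permutation of the special decreasing-block form of Lemma~\ref{lem:shallow-n,n-1}; these are parametrized by a small number of discrete choices (which decreasing run $\pi_1$ and $\pi_n$ land in, the breakpoints of the two runs in $\alpha$ and in $\beta$), and the mesh-pattern obstructions $3\mathbf{4}\mathbf{1}2$ and $\underline{3}41\underline{2}$ rule out all interleavings except the allowed ones, leaving exactly the stated polynomial-in-$j$ count.

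Concretely I would carry out the steps in this order: (1) dispose of $b_n(2)$, $b_n(n-1)$, $b_n(n)$ by direct structural arguments plus Lemmas~\ref{lem: n1} and \ref{lem:shallow-n,n-1}; (2) fix $3 \le j \le n-2$ and establish the two-decreasing-runs structure of $\alpha$ and $\beta$ from $123$-avoidance; (3) identify, via $\pi^{R}$ applied until the large entries are exhausted (as in the proof of Lemma~\ref{lem: 123-1}), exactly which interleavings of these runs are shallow, using Theorem~\ref{thm: 3n12} to kill the bad cases; (4) separate the resulting permutations into those that "recurse" (contributing $b_{n-2}(j-1)$ via $\pi^{LL}$ or $\pi^{L}$ followed by deleting the new extremes, checking shallowness of the intermediate permutations via left-to-right-maximum / right-to-left-minimum conditions) and those that are "terminal" of Lemma~\ref{lem:shallow-n,n-1} type; (5) count the terminal family by a straightforward (if fiddly) summation over the discrete parameters to get $2 + (n-j-2)(2j-5) + 4\binom{j-2}{2}$. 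The main obstacle will be step (3)–(5): correctly cataloguing all the ways the two decreasing runs in $\alpha$ and the two in $\beta$ can interleave in value, verifying shallowness by pushing the $R$ (and $L$) operators far enough to expose a forbidden non-extremal entry, and making sure the terminal count matches the claimed polynomial exactly — this is where the mesh patterns of Theorem~\ref{thm: 3n12} must be invoked repeatedly and where an off-by-one in the run-breakpoint bookkeeping is the real danger.
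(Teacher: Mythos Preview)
Your plan uses the right toolkit — the $L/R$ operators and the mesh obstructions of Theorem~\ref{thm: 3n12} — but the decisive case split is absent, and the recursion mechanism you propose does not isolate $b_{n-2}(j-1)$. The paper does not organize the count as ``recursive versus terminal.'' It first cases on the position of $2$: by $123$-avoidance either $\pi_n=2$ or $2$ lies strictly between $n$ and $1$. When $\pi_n=2$, the paper applies $\pi^{RL}$ (first $R$, then $L$) to delete $n$ and $1$, landing on an arbitrary element of $\T_{n-2}(123)$ whose $1$ sits in position $j-1$ and which does not end in $1$; the refined count from the \emph{proof} of Lemma~\ref{lem: 123-1} (with the position of $1$ fixed) then gives $1+\binom{j-2}{2}+(j-2)(n-j-2)+b_{n-2}(j-1)$, the last summand being exactly the sub-case where $\pi^{RL}$ begins with its maximum (equivalently $\pi_2=n-1$). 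When $2$ lies before $1$, the paper sub-cases on the position of $n-1$ (at position $j+1$, at position $2$, or at some position in $\{3,\ldots,j-1\}$), and each sub-case is pinned down by a single application of Theorem~\ref{thm: 3n12} to $\pi^{LLR}$, $\pi^{RRLL}$, or $\pi^{RLR}$, yielding $(j-3)(n-j-1)+1$, $\binom{j-2}{2}$, and $(j-3)^2$ respectively; summing all four pieces gives the stated formula.

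Your $\pi^L$-first route stalls: after $\pi^L$ the new maximum lands at position $j-1$ (where the $1$ was), not at position $1$, and the new $1$ lands wherever the old $2$ was — so $\pi^L$ is not of the shape counted by any $b_{n-1}(\cdot)$, and iterating to $\pi^{LL}$ only compounds this. The order $R$ then $L$ works precisely because $\pi_1=n$ makes the $R$-step trivially valid (the moved entry arrives at position $1$, automatically a left-to-right maximum). Finally, your ``terminal'' family is misattributed: Lemma~\ref{lem:shallow-n,n-1} belongs to the $231$ section and describes skew/direct sums of decreasing blocks, which are not $123$-avoiding in general; the relevant building blocks here are instead the two explicit one-parameter families exhibited inside the proof of Lemma~\ref{lem: 123-1}. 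Without the $(\text{position of }2,\ \text{position of }n-1)$ split you will not get the polynomial $2+(n-j-2)(2j-5)+4\binom{j-2}{2}$ to fall out.
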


\begin{proof}
    Let us first consider those permutations with $\pi_1=n$, $\pi_j=1$ and $\pi_n=2$ with $j\leq n-2.$ Notice that $\pi^{RL}$ is still shallow of length $n-2$ and has the property that 1 appears in the $j-1$ position where $j-1\leq n-3,$ so $\pi^{RL}$ does not end with 1. It avoids 123 since it was essentially obtained by ``deleting'' 1 and $n$. By considering the position of $n-2$ in $\pi^{RL}\in\T_{n-2}(123)$, by the proof of Lemma~\ref{lem: 123-1}, there are $1+\binom{j-2}{2} + (j-2)(n-2-j) + b_{n-2}(j-1)$ such permutations. 

    Next, let us consider those with $\pi_i=2$ with $1<i<j.$ First, let us consider those permutations with $\pi_{j+1}=n-1$. In this case, we must have $i=j-1$, so we have
    \[\pi=n\pi_2\ldots\pi_{j-2}21(n-1)\pi_{j+2}\ldots \pi_n\] where $\pi_2\ldots\pi_{j-2}$ and $\pi_{j+2}\ldots \pi_n$ are both decreasing segments since $\pi$ is 123-avoiding.  
    We claim that the only such permutations are either \[n(j-1)\ldots 21(n-1)\ldots j\]
    or that have $\pi_2\in\{j,\ldots,n-2\}$ and $\pi_n\in\{3,\ldots,j-1\}$, with all the remaining elements before 2 being smaller than all the remaining elements after 1. If $\pi$ is not of one of these forms, then we have $\pi_2>\pi_3>\pi_{n-1}>\pi_n,$ in which case $\pi^{LLR}$ would contain a $\underline{3}41\underline{2}$ pattern, contradicting Theorem~\ref{thm: 3n12}. These are clearly shallow since $\pi^{LLR}$ is the direct sum of two shallow permutations, and it
    is clear there are $(j-3)(n-j-1)+1$ such permutations based on the choices of $\pi_2$ and $\pi_{n}.$

    Next, consider those with $\pi_2=n-1,$ so 
    \[\pi=n(n-1)\pi_3\ldots\pi_{i-1}2\pi_{i+1}\ldots\pi_{j-1}1\pi_{j+1}\ldots \pi_n\]
    where $\pi_{i+1}\ldots\pi_{j-1}\pi_{j+1}\ldots \pi_n$ is decreasing since $\pi$ avoids 123. Notice that by first considering $\pi^{RR},$ we get a permutation \[\pi^{RR}=\pi_n\pi_{n-1}\pi_3\ldots\pi_{i-1}2\pi_{i+1}\ldots\pi_{j-1}1\pi_{j+1}\ldots \pi_{n-2}\]
    with $\pi_{n-1}>\pi_n$ since $j\leq n-2.$ This is clearly still shallow if the original $\pi$ was. Now, taking $\pi^{RRLL}$, we see that our original permutation is only shallow if $\pi_{n-1}$ is a left-to-right maximum in $\pi^{RRLL}$ since $\pi_n<\pi_{n-1}$ will appear to its right. Thus we must have that the elements of the segment $\pi_3\ldots\pi_{i-1}$ are all less than $\pi_{n-1},$ as is $\pi_{n}.$ Thus $\pi_{n-1}=i+1$ and there are $i-2$ choices of $\pi,$ all of which are shallow. Summing over all possible choices of $i,$ we see there are $\binom{j-2}{2}$ permutations. 

    Now left us consider the final case, when $\pi_j=1$, $\pi_i=n-1$ with $3\leq i\leq j-1$, and $\pi_n\neq 2.$ We claim that $\pi_n\in\{3,\ldots,j-1\}$ for each possible value of $i$ and that the other terms are determined, for a total of $(j-3)^2$ permutations.

    Indeed, in this case, we have $\pi=n\pi_2\ldots(n-1)\ldots 1\ldots\pi_{n-1}\pi_n$, and so $\pi^{RL} = \pi'_2\ldots(n-1)'\ldots \pi'_{n}\ldots\pi'_{n-1}$. Note that if we show that both $\pi_2<\pi_{n-1}$ and $2$ appears before $n-2$, the rest of the permutation $\pi$ must be determined since $\pi$ must avoid 123. Notice that in $\pi^{RL}$, if $\pi_2>\pi_{n-1},$ then $\pi'_2(n-1)'\pi_n'\pi_{n-1}'$ is a $\underline{3}41\underline{2}$ pattern, contradicting Theorem~\ref{thm: 3n12}. Note also that $\pi_2\neq n-2$ since otherwise $\pi_2'(n-1)'\pi_n'\pi_{n-1}'$ would be a $\underline{3}41\underline{2}$ pattern in $\pi^{RL}.$ If $n-2$ does occur before 2, then we would have \[\pi = n\pi_2\ldots (n-1)\ldots (n-2)\ldots 2\ldots 1\ldots\pi_{n-1}\pi_n,\] but then $\pi^{RLR}$ contains $\pi_{n-1}'(n-2)'2'\pi_{n}'$ which is a $3\boldsymbol{4}\boldsymbol{1}2$ pattern, contradicting Theorem~\ref{thm: 3n12}.

   Thus we have $(j-3)(n-j-1)+1 + \binom{j-2}{2} + (j-3)^2$ permutations that do not end in 2. 
   Adding all these possible cases togther gives us the result in the statement of the theorem.
\end{proof}

We are now ready to prove the main theorem of this section.

\begin{proof}[Proof of Theorem~\ref{thm: 123}]
Let $t_n(123)$ be the number of permutations in $\T_n(123)$ and let $b_n$ be the number of permutations in $\T_{n}(123)$ that start with $n$. Since there are clearly also $b_n$ permutations that end with 1 and $a_{n-2}$ permutations that both start with $n$ and end with $1$, using the results of Lemm~\ref{lem: 123-1}, we have \[t_n(123) = 2b_n-t_{n-2}(123) + 2\binom{n-1}{3}+n-1. \]
 Using Lemma~\ref{lem: 123-2}, we obtain
 \begin{align*}
 b_n &= \sum_{j=2}^n b_n(j) \\
 &= 1+2t_{n-2}(123)+\sum_{j=3}^{n-2}\left(2+(n-j-2)(2j-5) + 4\binom{j-2}{2} + b_{n-2}(j-1)\right) \\
 & =1+2t_{n-2}(123)+ 2(n-3) + 5\binom{n-3}{3}+\binom{n-4}{3} + b_{n-2}-b_{n-2}(n-2)\\
  & =1+2t_{n-2}(123)+ 2(n-3) + 5\binom{n-3}{3}+\binom{n-4}{3} + b_{n-2}-t_{n-4}(123).\\
 \end{align*}
 Thus if $B(x)$ is the generating function for the sequence $\{b_n\}$, we have
 \[
 T_{123}(x) = 2B(x) -x^2T_{123}(x) + \frac{x^2}{(1-x)^2} + \frac{2x^4}{(1-x)^4} + 1-x
 \]
 and
 \[
 B(x) = (2x^2-x^4)T_{123}(x) + x^2B(x) + \frac{x}{1-x} + \frac{5x^6+x^7}{(1-x)^4} + \frac{2x^5}{(1-x)^2}-2(x^2+x^3).
 \]
 Solving for $T_{123}(x)$, we obtain the result in the statement of the theorem.
\end{proof}






\subsection{123-avoiding shallow permutations with symmetry}

In this subsection, we consider 123-avoiding permutations that exhibit one of the three symmetries. 

\begin{theorem}
    For $n\geq 1$, the number of shallow, 123-avoiding involutions of length $n$ is $ \lfloor \frac{n^2}{4} \rfloor +1$.
\end{theorem}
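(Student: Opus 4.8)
The plan is to characterize 123-avoiding shallow involutions directly and then count them. A 123-avoiding permutation that is also an involution has a very rigid structure: since it avoids 123, it can be written as a merge of two decreasing sequences, and being an involution forces its one-line notation to be ``symmetric'' about the main diagonal. I would first recall (or reprove in a line) that a 123-avoiding involution $\pi$ decomposes as $\pi = \alpha \ominus \beta$ where $\alpha$ and $\beta$ are themselves 123-avoiding, together with the involution condition, and extract from this that $\pi$ must look like a decreasing run of ``large'' values interleaved in a controlled way with a decreasing run of ``small'' values, with the fixed points (if any) lying on the antidiagonal-like locus forced by symmetry. Concretely, I expect the 123-avoiding involutions to be exactly those of the form $\delta_a \oplus \mu \oplus \delta_a$-type blocks, or more cleanly: $\pi$ is determined by choosing how the transpositions $(i, \pi_i)$ with $i < \pi_i$ are nested/crossing, and avoidance of 123 restricts this drastically.

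Next I would impose shallowness. Here I can lean on Theorem~\ref{thm: shallowRecursive} and, more usefully, on Theorem~\ref{thm: 3n12}: a shallow permutation avoids $3\boldsymbol{4}\boldsymbol{1}2$ and $\underline{3}41\underline{2}$. Among 123-avoiding involutions, I would argue that these forbidden patterns rule out all but a linearly-parametrized family. I anticipate the surviving permutations split into a few explicit families indexed by one or two integer parameters (e.g.\ ``put $\delta_k$ at the bottom-left and its transpose-forced mirror at the top-right, fill the middle with an increasing run of fixed points'' plus small sporadic variants), and that counting these families as a function of $n$ gives a quadratic. Since the claimed answer $\lfloor n^2/4\rfloor + 1$ has second difference eventually constant ($=1/2$ on average, i.e.\ the differences go $0,1,1,2,2,3,3,\dots$), I would expect to see a parameter ranging over roughly $n/2$ values each contributing a linearly-growing count, or equivalently two interleaved arithmetic progressions, matching the even/odd split $n^2/4$ vs $(n^2-1)/4$.

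The cleanest route is probably recursive, mirroring the earlier proofs in the paper: let $j_n$ be the number of 123-avoiding shallow involutions of length $n$, condition on whether $\pi_n = n$ (contributing $j_{n-1}$), and otherwise use Lemma~\ref{lem: 123-1} / Lemma~\ref{lem: 123-2}-style case analysis on the positions of $n$ and $1$ — but now with the extra involution constraint, which typically \emph{halves} the freedom (choosing the top-left block determines the bottom-right block). I would push this to a recurrence of the form $j_n = j_{n-1} + (\text{linear in } n)$ or $j_n = j_{n-2} + (\text{linear in } n)$, check the small cases $j_1=1, j_2=2, j_3=3, j_4=5, \dots$ against $\lfloor n^2/4\rfloor+1$, and conclude by induction.

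The main obstacle I expect is pinning down the exact structure theorem for 123-avoiding \emph{shallow} involutions: verifying that shallowness (via the $R$/$L$ operators or the mesh-pattern obstructions of Theorem~\ref{thm: 3n12}) genuinely collapses the a priori exponential family of 123-avoiding involutions down to a polynomial-sized one, and handling the boundary/sporadic cases (small $n$, configurations with $\pi_1 = n$ and $\pi_n = 1$ simultaneously, the placement of the central increasing run of fixed points) without double-counting. Once that structure is correct, the enumeration itself should be a short computation.
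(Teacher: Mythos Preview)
Your overall shape is right --- a short recursion yielding $j_n = j_{n-2} + (n-1)$ is exactly what the paper obtains --- but two concrete missteps would derail the argument as you have sketched it.

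First, conditioning on $\pi_n = n$ and claiming this contributes $j_{n-1}$ is wrong for the pattern $123$: if $\pi$ avoids $123$ and $\pi_n = n$, then any ascent among $\pi_1,\dots,\pi_{n-1}$ together with $n$ gives a $123$, so $\pi$ is forced to be $(n-1)(n-2)\cdots 1\, n$. That is one permutation, not $j_{n-1}$. Relatedly, your description of the expected structure (``fill the middle with an increasing run of fixed points'') cannot occur in a $123$-avoider: three fixed points in a row are already a $123$. So the structural picture you are aiming for needs adjustment before any counting can begin.

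The paper's split is instead on whether $\pi_1 = n$ (equivalently, for an involution, $\pi_n = 1$). That case contributes $j_{n-2}$ by Lemma~\ref{lem: n1}. For the complementary case $\pi_i = n$ with $i>1$, the involution condition gives $\pi_n = i$, and the paper's key step is to show directly from shallowness (via the $L$-operator, not the mesh obstructions of Theorem~\ref{thm: 3n12}) that $\pi_1 \le \pi_n$: if $\pi_1 > \pi_n = i$ then $\pi_{\pi_1} = 1$ sits strictly between $n$ and $\pi_n$, and in $\pi^L$ the displaced $\pi_1$ is neither a left-to-right maximum nor a right-to-left minimum. Once $\pi_1 \le \pi_n$ is established, $123$-avoidance forces $\pi = \delta_m \oplus \delta_{n-m}$ for some $1 \le m \le n-1$, giving exactly $n-1$ permutations and hence $j_n = j_{n-2} + (n-1)$. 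Your proposal does not isolate this $\pi_1 \le \pi_n$ inequality, and the machinery you reach for (Lemmas~\ref{lem: 123-1}--\ref{lem: 123-2}, the $\alpha \ominus \beta$ decomposition) is heavier than what is needed.
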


\begin{proof}
    Let $a_n$ be the number of shallow, 123-avoiding permutations that are involutions. We will show that  $a_n=a_{n-2}+n-1$. This together with the initial conditions $a_1=1$ and $a_2=2$ implies the formula as given in the statement of the theorem. 
    
    Note that by Lemma \ref{lem: n1}, there are $a_{n-2}$ shallow 123-avoiding involutions that start with $n$ and end with $1$ since these comprise a 2-cycle and thus removing them leaves us with an involution. Also note that all involutions that have $\pi_n=1$ must also have $\pi_1=n$ and thus all involutions starting with $\pi_1=n$ are counted in this way.

    Next suppose $\pi_i=n$ for $i>1$. Then since $\pi$ is an involution $\pi_n=i.$ We claim that $\pi_1\leq \pi_n.$ For the sake of contradiction, suppose not. If $\pi_1>\pi_n=i$, then since $\pi$ is an involution $\pi_{\pi_1}=1$. Since $\pi_1>i$, this 1 appears after $n$ and before $\pi_n.$ Thus, in $\pi^L$, $\pi_1$ is replaces this 1, but cannot be a left-to-right maximum since $n$ is to its left and cannot be a right-to-left minimum since it is larger than $\pi_n.$ Thus $\pi_1\leq \pi_n.$

    Finally, since $\pi$ avoids 123 and $\pi_1\leq \pi_n,$ the only permutations that satisfy this are of the form \[\pi=m(m-1)\ldots 1 n (n-1)\ldots (m+1)\] for $m\in[n-1]$. There are clearly $n-1$ such permutations, adn so the result follows.
    
    
\end{proof}

\begin{theorem}
    For $n\geq 1$, the number of shallow, 123-avoiding centrosymmetric permutations of length $n$ is $  \frac{n^2}{4}  +1$ when $n$ is even and 1 when $n$ is odd.
\end{theorem}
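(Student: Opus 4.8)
The plan is to mimic the structure of the proof just given for 123-avoiding involutions, using the known characterization of 123-avoiding centrosymmetric permutations together with the shallowness criteria from Theorem~\ref{thm: shallowRecursive} and the mesh-pattern obstructions of Theorem~\ref{thm: 3n12}. Writing $c_n$ for the number of shallow, 123-avoiding centrosymmetric permutations of length $n$, I expect to prove a recurrence of the form $c_n = c_{n-2} + (\text{something linear in } n)$ and then solve it with the appropriate initial conditions ($c_1=1$, $c_2=2$, $c_3=1$, $c_4=5$, say), checking the parity split.

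First I would invoke Lemma~\ref{lem: 123-ends-in-1}-type reasoning (really the analysis in the proof of Lemma~\ref{lem: 123-1}) to locate $n$ and $1$. Since $\pi=\pi^{rc}$, having $\pi_1=n$ forces $\pi_n=1$ and vice versa, so the permutations split into those with $\pi_1=n,\pi_n=1$ and those with $1<i\le n$ the position of $n$ and $1\le j<n$ the position of $1$, with $i,j$ not both extreme. In the first case, Lemma~\ref{lem: n1} shows that deleting the leading $n$ and trailing $1$ yields a shallow, 123-avoiding permutation of length $n-2$; moreover the $180^\circ$ rotation symmetry is preserved by this deletion, so these are counted by $c_{n-2}$. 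In the second case I would argue, exactly as in Lemma~\ref{lem: 123-1}, that the constraints of 123-avoidance force the entries before the position of $n$ and after the position of $1$ (and their rotated counterparts) to be decreasing runs, and that shallowness (via repeatedly applying $R$ and $L$, or via the $3\mathbf{4}\mathbf{1}2$ and $\underline{3}41\underline{2}$ avoidance from Theorem~\ref{thm: 3n12}) pins each such permutation down up to a small number of parameters. The centrosymmetry then cuts the number of free parameters roughly in half, producing the linear term and the parity dependence.

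The main obstacle I anticipate is the careful case analysis in the ``non-extreme'' case: I need to handle separately whether $1$ precedes $n$ or $n$ precedes $1$ in $\pi$ (the rotation symmetry exchanges these configurations, so I should really count orbits), and within each I must track which entries are forced, which are free, and how the constraint $\pi=\pi^{rc}$ interacts with the forced decreasing runs. In particular, when $n$ precedes $1$, centrosymmetry forces $n$ and $1$ to sit in mirror-image positions $i$ and $n+1-i$, and the block structure $\pi = \delta_{a}\oplus(\text{middle})\oplus\delta_{a}^{\,\prime}$ must be reconciled with the decreasing-run description; I expect only a handful of these survive shallowness, contributing the bulk of the linear term, with the odd-$n$ case collapsing to just the forced permutation $\delta_n$ (which is shallow by Corollary~\ref{cor:decreasing} and centrosymmetric) because the central position cannot simultaneously accommodate a nontrivial block and the rotation symmetry. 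Once the recurrence $c_n = c_{n-2} + f(n)$ with the correct $f$ is established, summing it is routine and yields $n^2/4+1$ for even $n$ and $1$ for odd $n$.
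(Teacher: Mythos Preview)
Your overall plan---split off the $\pi_1=n$, $\pi_n=1$ case via Lemma~\ref{lem: n1} to get $c_{n-2}$, then explicitly classify the remaining centrosymmetric permutations using the $3\boldsymbol{4}\boldsymbol{1}2$ and $\underline{3}41\underline{2}$ obstructions---is exactly the paper's approach, and the recurrence $c_n=c_{n-2}+(n-1)$ for even $n$, $c_n=c_{n-2}$ for odd $n$ is indeed what emerges. However, two points in your outline would lead you astray.

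First, your parenthetical claim that ``the rotation symmetry exchanges these configurations'' (meaning $n$-before-$1$ versus $1$-before-$n$) is false. If $\pi_i=n$ and $\pi_j=1$, then in $\pi^{rc}$ the value $n$ sits at position $n+1-j$ and $1$ sits at position $n+1-i$; the relative order of these positions is the same as that of $i$ and $j$. So $rc$ preserves, rather than swaps, the two cases, and you must analyze each separately among centrosymmetric permutations---not count orbits. The paper does exactly this for even $n$ and obtains two explicit one-parameter families (one with $1$ before $n$, one with $n$ before $1$), together contributing $n-1$ permutations.

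Second, your explanation for the odd case (``the central position cannot simultaneously accommodate a nontrivial block'') misses the actual mechanism. The key is that centrosymmetry forces the fixed point $\pi_{(n+1)/2}=(n+1)/2$, and then 123-avoidance forces $n$ into the first half and $1$ into the second half. If $\pi_1\neq n$, then $\pi_n\neq 1$, and applying $R$ places $\pi_n$ (which is smaller than $(n+1)/2$, hence smaller than $\pi_1$) into the first half with $1$ still to its right---so it is neither a left-to-right maximum nor a right-to-left minimum, contradicting shallowness via Theorem~\ref{thm: shallowRecursive}. That is what forces $c_n=c_{n-2}$ for odd $n$; the unique survivor is $\delta_n$, but the reason is shallowness, not a block-structure obstruction. (Relatedly, your direct-sum sketch $\delta_a\oplus(\text{middle})\oplus\delta_a'$ does not fit 123-avoiding permutations, which are indecomposable under $\oplus$ unless trivial.)
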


\begin{proof}
Let $c_n$ be the number of centrosymmetric 123-avoiding permutations.
First, let us consider the case when $n$ is odd. Since $\pi=\pi^{rc},$ we must have that $\pi_{(n+1)/2}=(n+1)/2.$  Since $\pi$ avoids 123, it must be the case that the elements in $\pi_1\pi_2\ldots \pi_{(n+1)/2-1}$ are greater than $(n+1)/2$ and the elements in $\pi_{(n+1)/2+1}\ldots\pi_n$ are less than $(n+1)/2.$ In particular, $n$ occurs in the first half and 1 occurs in the second half. If 1 occurs at the end of $\pi,$ then since $\pi=\pi^{rc}$, $\pi_1=n.$ Thus by Lemma~\ref{lem: n1}, there are $c_{n-2}$ such permutations. If $1$ does not occur at the end, then $n$ necessarily does not occur at the beginning. But then, in $\pi^R,$ $\pi_n$ is neither a left-to-right maximum nor a right-to left minimum. Thus, when $n$ is odd, we have $c_n=c_{n-2}.$ Since $c_1=1$, the result for odd $n$ follows.

Now, suppose $n$ is even. We will show that $\pi$ either starts with $n$, in which case there are $c_{n-2}$ for the same reasons as above, or is either of the form \[\pi = (n-k)(n/2)(n/2-1)\ldots (k+1)(k-1)\ldots 21 n (n-1)\ldots (n-k+1)(n-k-1)\ldots (n/2+1) k \]
for $2\leq k\leq n/2+1$,
or of the form
\[\pi=(n/2)(n/2-1)\ldots (k+1) n k\ldots 2(n-1)(n-2)\ldots (n-k+1) 1(n-k)\ldots (n/2+1) \]
for $1\leq k< n/2$.

Let us first consider the case when $n$ appears after 1 in $\pi.$ Since $\pi$ avoids 123 and is centrosymmetric, it must be that $\pi_{n/2}=1$ and $\pi_{n/2+1}=n$. Note that if $\pi_1<\pi_n$, then we must have the first case above with $k=n/2+1$, so let us assume $\pi_1>\pi_n.$ In that case, in $\pi^{RL}$, we will have a $\underline{3}41\underline{2}$ pattern unless $\pi_2<\pi_{n-1}$, contradicting Theorem~\ref{thm: 3n12}. Since $\pi$ is centrosymmetric, the only possibility is the first one listed above. 

Next consider when $n$ appears before 1 in $\pi.$ In this case, we must have $\pi_1>\pi_n$ or else we will have a $3\boldsymbol{4}\boldsymbol{1}2$ pattern, contradicting Theorem~\ref{thm: 3n12}. Therefore, since $\pi$ avoids 123 and is centrosymmetric, we must have $\pi_1=n/2$ and $\pi_n=n/2+1$. Furthermore, the elements that appear before $n$ are decreasing and consecutive and those after 1 are decreasing are consecutive, since otherwise we would have a 123 pattern. This implies that either $1$ appears immediately after $n,$ in which case we have the second case above with $k=1,$ or the 2 and $n-1$ appear between the $n$ and $1$ in $\pi.$ In fact, we must have 2 appearing before $n-1$, or else $\pi^{RL}$ will have a $3\boldsymbol{4}\boldsymbol{1}2$ pattern, contradicting Theorem~\ref{thm: 3n12}. It is a straightforward exercise to check that these permutations listed above are indeed shallow, and we now have shown they are the only possible shallow 123-avoiding permutations of length $n.$

Thus when $n$ is even, $c_n=c_{n-2}+n-1,$ which together with the fact that $c_2=2,$ implies that $c_n = \frac{n^2}{4}+1.$
\end{proof}

\begin{theorem}
    For $n\geq 3$, the number of shallow 123-avoiding persymmetric permutations of length $n$ has the associated generating function 
    \[
    P_{123}(x) = \frac{x^6 + x^5 + x^3 - 2 x^2 + 1}{(x - 1)^2 (x + 1) (1-2x^2-x^4)}.
    \]
\end{theorem}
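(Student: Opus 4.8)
The plan is to mirror the structure used for the 123-avoiding shallow involutions and centrosymmetric permutations above: classify persymmetric permutations in $\T_n(123)$ according to whether they begin with $n$, and set up a system of generating function equations whose solution yields $P_{123}(x)$. Recall that $\pi$ is persymmetric means $\pi = \pi^{rci}$, equivalently the diagram of $\pi$ is symmetric about the anti-diagonal; in particular $\pi_1 = n$ if and only if $\pi_n = 1$, and removing such a pair $\{n,1\}$ (via Lemma~\ref{lem: n1}) leaves a persymmetric shallow 123-avoiding permutation of length $n-2$. So if $P^B(x)$ denotes the generating function for those $\pi \in \T_n(123)$ with $\pi_1 = n$ that are persymmetric, we immediately get a contribution $x^2 P_{123}(x)$ from the case $\pi_1 = n$ (and $\pi_n = 1$).

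Next I would handle the permutations with $\pi_1 = n$ directly, using the position of $1$ — say $\pi_j = 1$. Since $\pi$ avoids 123 and is persymmetric, the sub-block after the $1$ is essentially forced to be the $rci$-image of the block between position $1$ and position $j$, so a persymmetric $\pi$ starting with $n$ decomposes into a "middle" persymmetric shallow 123-avoiding piece together with two $rci$-mirrored outer pieces, each of which is a shallow 123-avoiding permutation starting with $n$ (whose generating function $B(x)$ we already have from the proof of Theorem~\ref{thm: 123}). This should give an equation of the shape
\[
P^B(x) = x + (\text{small correction terms}) + \frac{1}{x}\,B(x^2)\,P^B(x),
\]
where the $B(x^2)$ reflects that the mirrored outer block and its image together contribute an even number of positions, and the low-degree correction terms come from the finitely many sporadic shapes (analogous to the forms $n(j-1)\cdots 21(n-1)\cdots j$ appearing in Lemma~\ref{lem: 123-2}) that must be enumerated by hand using the mesh-pattern obstructions $3\boldsymbol{4}\boldsymbol{1}2$ and $\underline{3}41\underline{2}$ from Theorem~\ref{thm: 3n12}. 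Finally, for $\pi$ not beginning with $n$: persymmetry forces $\pi = \gamma \oplus \nu \oplus \gamma^{rci}$ where $\nu$ is a persymmetric shallow 123-avoiding permutation and $\gamma$ is any shallow 123-avoiding permutation beginning with $n$, contributing $B(x^2)\,P_{123}(x)$; but one must be careful here because for 123-avoiders a direct sum of three nonempty pieces would contain a 123 pattern, so in fact $\gamma$ is empty unless $\nu$ has length at most... — this forces $\gamma$ to contribute at most a single descending run, and the decomposition collapses to $P_{123}(x) = 1 + P^B(x) + (\text{a single decreasing-block factor})\,P_{123}(x)$, which I would need to pin down precisely.

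Assembling: once $P^B(x)$ is solved from its self-referential equation (the $\frac{1}{x}B(x^2)$ term is the reason $B(x)$ from Theorem~\ref{thm: 123} is needed as input), substituting into the equation for $P_{123}(x)$ and simplifying should produce the stated rational function $\dfrac{x^6 + x^5 + x^3 - 2x^2 + 1}{(x-1)^2(x+1)(1 - 2x^2 - x^4)}$. The main obstacle I anticipate is the careful bookkeeping in the 123-avoiding case: unlike the 231-avoiding situation where direct sums are plentiful, here 123-avoidance severely restricts how a persymmetric permutation can decompose, so I must correctly identify the handful of sporadic "one-ascent" shapes that survive the shallowness test (via Theorem~\ref{thm: 3n12}) and get their generating-function contributions exactly right — an off-by-one in any of these low-degree correction terms will throw off the final numerator. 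Verifying the first several coefficients of the resulting series against a direct enumeration of small cases would be the sanity check I'd run before declaring the computation complete.
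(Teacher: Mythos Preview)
Your proposal has a genuine structural gap: you are trying to import the 231-avoiding machinery (direct-sum decompositions, the factor $B(x^2)$ coming from a mirrored pair of outer blocks) into the 123-avoiding setting, and you yourself notice midway that this cannot work, since any direct sum of nonempty pieces of lengths at least $1,1,1$ already contains a $123$. You flag the problem but do not resolve it; the sentences ``this forces $\gamma$ to contribute at most a single descending run, and the decomposition collapses to \ldots'' and ``which I would need to pin down precisely'' are exactly where the argument needs content and currently has none. Likewise the equation for $P^B(x)$ with a $\tfrac{1}{x}B(x^2)P^B(x)$ term has no justification in this context: for a persymmetric $123$-avoider starting with $n$, there is no room for an arbitrary shallow $123$-avoider $\gamma$ starting with its maximum to appear as an outer block.

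The paper's approach is quite different and much more direct. It sets $p_n=|\{\pi\in\T_n(123):\pi=\pi^{rci}\}|$ and $q_n$ the number of those with $\pi_1=n$, and classifies by the position $i$ of $n$. Using persymmetry, $123$-avoidance, and the mesh-pattern obstructions of Theorem~\ref{thm: 3n12}, it shows that for each $3\le i\le \lfloor n/2\rfloor+1$ there is \emph{exactly one} such permutation (and none for larger $i$), while $i=1$ contributes $q_n$ and $i=2$ contributes $q_{n-2}$. This yields the linear recurrence $p_n=q_n+q_{n-2}+\lfloor n/2\rfloor-1$; a symmetric analysis by the position of $1$ in those starting with $n$ gives $q_n=p_{n-2}+q_{n-2}+\lfloor (n-1)/2\rfloor-1$. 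Translating to generating functions and solving gives $P_{123}(x)$. The point you are missing is that the $123$-constraint together with persymmetry and shallowness is so rigid that almost every case yields a \emph{single} explicit permutation rather than a family parametrized by a smaller shallow permutation; the recursion is additive with a polynomial-in-$n$ term, not multiplicative via $B(x^2)$.
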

\begin{proof}
    Let $p_n$ denote the number of persymmetric 123-avoiding permutations and let $q_n$ denote those that start with $\pi_1=n.$

    First note that if $\pi_2=n$, then we must have $\pi_1=n-1$ since $\pi$ is persymmetric. Also, we must have $\pi_n=1$ since if 1 appeared anywhere else in $\pi,$ then in $\pi^L$, the element $n-1$ would not be a left-to right maximum nor a right-to-left minimum, and so $\pi$ would not be shallow. Thus since $\pi_1\pi_2=(n-1)n$ and $\pi_n=1,$ then $\pi^{RL}\in\S_{n-2}$ will be a shallow 123-avoiding persymmetric permutation that starts with $n-2.$ Since any such permutation can be obtained this way, there are $q_{n-2}$ persymmetric permutations $\pi\in\T_{n}(123)$ with $\pi_2=n.$

    Now, we will show there is exactly one shallow persymmetric 123-avoiding permutation with $\pi_i=n$ for $3\leq i \leq \lfloor \frac{n}{2}\rfloor +1$ and none with $i>\lfloor\frac{n}{2}\rfloor+1.$
    First note that if $i>\lfloor\frac{n}{2}\rfloor+1$, then $\pi_1 = n+1-i$. But since $\pi$ avoids 123, the elements before $n$ must be decreasing, which is impossible in this case since $\pi_1$ is too small. 
    Now assume $i\leq \lfloor\frac{n}{2}\rfloor+1.$ Since $\pi$ is persymmetric, this means $\pi_1=n+1-i$ and since $\pi$ avoids 123, we have $\pi_1\ldots\pi_{i-1}$ is decreasing. If the 1 appears before $n$, then we must have that $\pi_{i-1}=1$ and $\pi_n=n+2-i$, and that every element after $n$  is decreasing in order to avoid 123. The only way this is possible is if $i=\lfloor\frac{n}{2}\rfloor+ 1$ and $\pi = (n/2)\ldots 21n(n-1)\ldots (n/2+1)$. In fact, this is the only possibility for $i =\lfloor\frac{n}{2}\rfloor+ 1$, so assume $i\leq \lfloor\frac{n}{2}\rfloor$ and that the 1 appears after $n.$ Note that if $\pi_j=i$ for $i+1\leq j\leq n-1$, then $\pi_n=n+1-j$ which implies $\pi$ contains $3\boldsymbol{4}\boldsymbol{1}2$. In order to avoid this, we must have $\pi_n=1.$ Since $\pi^{RL}$ must avoid $\underline{3}41\underline{2}$, we must have that either $\pi_{n-1}=2$ or $\pi_{n-1}>\pi_2$. In the second case, since $\pi$ avoids 123 and persymmetric, the only possibility is that $n$ is odd and taking $r=(n+1)/2$ we get \[\pi = (n+1-j)(r-1)(r-2)\ldots (r-i+2)n(r-i+1)\ldots 32(n-1)(n-2)\ldots r1.\]
    If $\pi_{n-1}=2$, then we must not have $\pi_{n-2}=3$ since $\pi^{RRLL}$ would send $\pi_2$ to where $\pi_{n-1}$ is in $\pi$ and it would not be a left-to-right maximum since $\pi_1>\pi_2$ would appear before it and would not be a right-to-left minimum since $\pi_{n-2}=3$ would appear to its right. Thus for similar reasons to above, we would have to have $\pi_{n-2}>\pi_2$ and there would only be one case: that $n$ is even and taking $r=n/2+1$, we have \[\pi = (n+1-j)(r-1)(r-2)\ldots (r-i+2)n(r-i+1)\ldots 32(n-1)(n-2)\ldots r21.\]
    Again it is straightforward to check these permutations are indeed shallow.
    Taken altogether, this implies that \[p_n = q_n + q_{n-2} + \bigg\lfloor \frac{n}{2}\bigg\rfloor-1.\]

    Next, let us consider those that have $\pi_1=n$. If $\pi_n=1$, then by Lemma~\ref{lem: n1}, there are $p_{n-2}$ such permutations. If $\pi_{n-1}=1$, then since $\pi$ is persymmetric, we must have $\pi_n=2.$ Then $\pi^{RL}$ is a persymmetric permutation that ends with 1, which  are also enumerated by $q_{n-2}$. Finally, by a similar proof to the one above, there is exactly one shallow 123-avoiding shallow permutation that starts with $n$ and has $\pi_i=1$ for $\lfloor \frac{n}{2}\rfloor + 1\leq i\leq n-2.$
    Now, this implies that \[q_n = p_{n-2} + q_{n-2} + \bigg\lfloor \frac{n-1}{2}\bigg\rfloor-1.\]
    Taking $P_{123}(x)$ and $Q_{123}(x)$ to be the respective generating functions. These recurrences together with the initial conditions imply that 
    \[P_{123}(x) = (1+x^2)Q_{123}(x) + \frac{x^5+x^4}{(1-x^2)^2} + 1+x^2\]
    and 
    \[Q_{123}(x) = x^2P_{123}(x) +x^2Q_{123}(x) + \frac{x^6+x^5}{(1-x^2)^2} +x.\]
    Solving for $P_{123}(x)$ gives us the generating function in the statement of the theorem.
\end{proof}

\section{Shallow permutations that avoid 321}\label{sec:321} 

Diaconis and Graham \cite{DG77} pointed out that permutations which satisfy the upper and lower bound of their inequality are enumerated by the bisection of the Fibonacci numbers, $F_{2n-1}$. These permutations were further discussed and characterized in \cite{HM13}. We start this section by providing an independent proof of this enumeration. We then enumerate these permutations by their descent count as well as those that exhibit certain symmetry.

\subsection{Enumeration of 321-avoiding shallow permutations}
\begin{theorem} \label{thm: 321}
    For $n \geq 1$, $t_n(321) = F_{2n-1}$, where $F_{2n-1}$ is the $(2n-1)$-st Fibonacci number. 
\end{theorem}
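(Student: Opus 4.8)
The plan is to show that $t_n(321)$ satisfies the recurrence $t_n = 3t_{n-1} - t_{n-2}$ for $n \geq 3$; since $t_1(321)=1$, $t_2(321)=2$, and $F_{2n-1} = 3F_{2n-3} - F_{2n-5}$ (immediate from $F_m = F_{m-1}+F_{m-2}$), this yields $t_n(321) = F_{2n-1}$ by induction. To get the recurrence I would partition $\mathcal{T}_n(321)$ according to the positions of the entries $n$ and $1$, using two elementary consequences of $321$-avoidance: every entry preceding a $1$ is increasing, every entry following an $n$ is increasing, and hence if $n$ occurs before $1$ they must be adjacent. The workhorse is Theorem~\ref{thm: shallowRecursive} (the $\pi^R$ operator); I would also use Lemma~\ref{lem: dir sum}, Lemma~\ref{lem: n1}, and the symmetries of Proposition~\ref{prop: invRc} to cut down the bookkeeping (for instance reverse-complement interchanges the cases $\pi_n=1$ and $\pi_1=n$). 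The six classes are determined by asking in turn whether $\pi_n=n$; else whether $\pi_1=1$; else whether $\pi_n=1$; else whether $\pi_1=n$; and otherwise (so $\pi_1,\pi_n\notin\{1,n\}$) whether $n$ precedes $1$ or follows it.

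The first classes are quickly handled: (i) $\pi_n=n$ contributes $t_{n-1}$ by deleting the fixed point; (ii) $\pi_n\ne n$, $\pi_1=1$ means $\pi = 1\oplus\sigma$ with $\sigma$ not ending in a fixed point, contributing $t_{n-1}-t_{n-2}$; (iii) $\pi_n=1$ forces $\pi=23\cdots n1$ and (iv) $\pi_1=n$ forces $\pi=n12\cdots(n-1)$, each a single permutation whose shallowness is checked by a one-line displacement/cycle computation. In class (v), $\pi_1,\pi_n\notin\{1,n\}$ with $n$ immediately before $1$, write $\pi = \alpha\,n\,1\,\beta$ with $\alpha,\beta$ increasing and nonempty; since $\pi_n=\max\beta$, the entry $\pi_n$ can never be a right-to-left minimum of $\pi^R$, so it must be a left-to-right maximum, and iterating $R$ forces $\beta$ to be a terminal segment $m(m+1)\cdots(n-1)$ with $\alpha=23\cdots(m-1)$. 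Conversely each such permutation is shallow, giving exactly $n-3$ of them.

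Class (vi) — $\pi_1,\pi_n\notin\{1,n\}$ with $1$ before $n$ — is the crux; call its count $g_n$. If the entry $n$ is not in position $n-1$, then in $\pi^R$ the entry $\pi_n$ occupies the position of $n$, has increasing entries after it in $\pi$, hence cannot be a right-to-left minimum of $\pi^R$ and must be a left-to-right maximum; together with $\pi_n = \max\beta$ this forces $\pi_n = n-1$. Then $\pi^R$ replaces $n$ by $n-1$, whose being the maximum makes it automatically a left-to-right maximum, so by Theorem~\ref{thm: shallowRecursive} the map $\pi\mapsto\pi^R$ is a shallowness- and $321$-preserving bijection onto the length-$(n-1)$ instances of class (vi), giving $g_{n-1}$ of them. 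If instead $n$ is in position $n-1$, then $\pi^R$ merely deletes $n$, and the result is an arbitrary element of $\mathcal{T}_{n-1}(321)$ with $1$ in an interior position; of these there are $t_{n-1}-t_{n-2}-1$ (remove the $t_{n-2}$ with $\pi_1=1$ and the unique one, $23\cdots(n-1)1$, with $\pi_{n-1}=1$). Hence $g_n = g_{n-1} + t_{n-1}-t_{n-2}-1$, and telescoping from $g_3=0$ gives $g_n = t_{n-1}-n+1$. Summing the six contributions, $t_n = t_{n-1} + (t_{n-1}-t_{n-2}) + 1 + 1 + (n-3) + (t_{n-1}-n+1) = 3t_{n-1}-t_{n-2}$, as required.

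The step I expect to be delicate is class (vi): making rigorous that $\pi\mapsto\pi^R$ is a genuine bijection in both directions — that inserting $n$ just before the last entry of an arbitrary small shallow $321$-avoider (when $n$ is penultimate), or replacing the maximum by a fresh $n$ (when it is not), introduces neither a new $321$ pattern nor a violation of the left-to-right-maximum / right-to-left-minimum condition — and in particular that the case $\pi_n = n-1$ is forced. The mesh-pattern restrictions of Theorem~\ref{thm: 3n12} are convenient for ruling out stray configurations (e.g.\ those with $\pi_1>\pi_n$ and $n$ far from the end). The remaining work — checking the explicit permutations in classes (iii)–(v) are shallow and verifying the base cases $n\le 3$ — is routine.
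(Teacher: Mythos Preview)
Your argument is correct, but it takes a genuinely different route from the paper's. The paper partitions $\mathcal{T}_n(321)$ solely by the position $j$ of the entry $n$: it first proves a structural lemma (Lemma~\ref{lem: 321end}) showing that whenever $\pi_j=n$ with $j\le n-2$ one has $\pi_k=k-1$ for all $j+2\le k\le n$, so that $\pi^{R^{n-j}}$ is an arbitrary element of $\mathcal{T}_j(321)$; this yields the summation recurrence $t_n(321)=2t_{n-1}(321)+\sum_{j=1}^{n-2}t_j(321)$. Your six-way partition by the placements of both $1$ and $n$ requires more casework up front but delivers the three-term recurrence $t_n=3t_{n-1}-t_{n-2}$ directly. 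The crucial step in your class~(vi) --- that $\pi_n=n-1$ is forced when $n$ sits at a position $\le n-2$ --- is exactly the first bullet of the paper's Lemma~\ref{lem: 321end}, and your self-referential bijection $\pi\mapsto\pi^R$ onto class~(vi) one size down is a tidy way to avoid invoking that lemma's third bullet. Your worry about bijectivity there is unfounded: since $\pi_n=n-1$ is the maximum of $\pi^R$, it is automatically a left-to-right maximum, so the converse of Theorem~\ref{thm: shallowRecursive} applies with no further hypotheses, and checking that the image lands in (and exhausts) class~(vi) at length $n-1$ is immediate from $\sigma_1,\sigma_{n-1}\notin\{1,n-1\}$. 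Theorem~\ref{thm: 3n12} is not actually needed anywhere in your argument.
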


Before proving this theorem, we will prove the following lemma, which determines what these permutations must look like when $n$ occurs before position $n-1$. 

\begin{lemma} \label{lem: 321end}
    Let $n\geq 3$. If $\pi \in \mathcal{T}_n(321)$ has $\pi_j=n$ with $1 \leq j < n-1$ then:
    \begin{itemize}
        \item $\pi_n = n-1$
        \item $\pi^R \in \mathcal{T}_n(321)$
        \item $\pi_{k} = k-1$ for $j+2\leq k \leq n,$ 
    \end{itemize}
\end{lemma}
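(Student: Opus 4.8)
The plan is to argue directly from the recursive characterization in Theorem~\ref{thm: shallowRecursive} together with the structural constraint imposed by 321-avoidance, analyzing what happens to the entries lying to the right of $n$ when we apply the right operator $R$. Since $\pi$ avoids $321$, the entries of $\pi$ in positions $1,\dots,j-1$ that lie to the left of $n=\pi_j$ form an increasing sequence, and likewise the entries in positions $j+1,\dots,n$ to the right of $n$ cannot contain a descent of length three, so in particular at most one of them can be smaller than $\pi_{j-1}$ (the largest entry to the left of $n$). I would first nail down the basic observation: when $j<n-1$, applying $R$ moves $\pi_n$ into position $j$, and by Theorem~\ref{thm: shallowRecursive} shallowness forces $\pi_n$ to be a left-to-right maximum or a right-to-left minimum of $\pi^R$. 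Because the entry now sitting in position $j$ has, to its right, the block $\pi_{j+1},\dots,\pi_{n-1}$ (which is nonempty since $j<n-1$) and that block contains entries both smaller and (generically) not all larger, I would show the ``right-to-left minimum'' option is impossible unless that block is very constrained, and the ``left-to-right maximum'' option forces $\pi_n$ to exceed everything to its left, i.e. $\pi_n=n-1$ together with $\pi_1<\pi_2<\dots<\pi_{j-1}<\pi_n$.

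Concretely, here is the order of steps. (1) Establish $\pi_n=n-1$: suppose $\pi_n=m<n-1$. In $\pi^R$ the value $m$ now occupies position $j$; it cannot be a right-to-left minimum because the value $1$ (if $1$ already lay to the right of $n$) or, more carefully, some smaller value lies to its right — one handles the subcase where $1$ is to the left of $n$ separately, but then $\pi_1=1$ is forced by $321$-avoidance and increasingness, and iterating $R$ brings a contradiction with Theorem~\ref{thm: 3n12} since a $3\boldsymbol4\boldsymbol12$ or $\underline3 41\underline2$ pattern emerges. So $m$ must be a left-to-right maximum in $\pi^R$, forcing $m>\pi_{j-1}$; but $\pi_{j-1}$ is the largest among $\{\pi_1,\dots,\pi_{j-1}\}$ and these together with the block to the right of $n$ exhaust $[n-1]$, so $m>\pi_{j-1}$ plus $321$-avoidance of the right block forces $m=n-1$. (2) Given $\pi_n=n-1$, deduce $\pi^R\in\mathcal T_n(321)$: shallowness of $\pi^R$ is immediate from Theorem~\ref{thm: shallowRecursive} since we just verified the left-to-right maximum condition, and $\pi^R$ avoids $321$ because replacing $n$ by $n-1$ in place and deleting the trailing $n-1$ cannot create a decreasing subsequence of length three (any such subsequence would, after reinstating $n$, already be present in $\pi$). (3) Deduce the explicit form $\pi_k=k-1$ for $j+2\le k\le n$: apply $R$ repeatedly. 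At each stage the current largest element sits in position $j$ (having been placed there as a left-to-right maximum), and the trailing entry gets reinserted at position $j$, again as a left-to-right maximum by the shallowness of each successive $\pi^{R^t}$; tracking which values land where, and using that the right block is forced to be increasing consecutive (from $321$-avoidance plus the left-to-right-maximum constraints), pins down $\pi_{j+1}\pi_{j+2}\cdots\pi_n = j\,(j+1)\cdots(n-1)$ shifted appropriately, which is exactly $\pi_k=k-1$ for $k\ge j+2$, with $\pi_{j+1}$ being whatever value remains.

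I would streamline step (3) by induction on $n-j$: the base case $j=n-2$ is handled by a single application of $R$ (one checks $\pi_{n-1}$ must be a left-to-right maximum of $\pi^R$, which combined with $\pi_n=n-1$ and $321$-avoidance forces $\pi_{n-1}=n-2$), and the inductive step applies the already-established part of the lemma to $\pi^R$, which by step (2) lies in $\mathcal T_n(321)$ (really $\mathcal T_{n-1}(321)$ after deleting the fixed point, but since $\pi^R$ now ends in $n-1$ as its second-to-last-type configuration... one is careful here) and has $n-1$ in position $j$ with $j<(n-1)-1$ unless $j=n-2$. The main obstacle I anticipate is step (1), specifically ruling out the ``right-to-left minimum'' escape route cleanly: one must carefully handle where the value $1$ sits relative to $n$ and invoke Theorem~\ref{thm: 3n12} at the right moment, since a naive argument stalls when $1$ happens to lie to the left of $n$. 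Getting that subcase analysis airtight — rather than the bookkeeping in step (3), which is essentially forced once the structure is known — is where the real work lies.
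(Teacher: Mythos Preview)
Your overall architecture is the same as the paper's---use Theorem~\ref{thm: shallowRecursive} to analyze $\pi^R$, then iterate for the third bullet---but you miss the one structural fact that makes step~(1) a two-line argument and instead replace it with a case analysis that is both unnecessary and, as written, incorrect.

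The fact you need is this: since $\pi_j=n$ and $\pi$ avoids $321$, \emph{every} inversion among positions $j+1,\dots,n$ would combine with $n$ to form a $321$, so $\pi_{j+1}<\pi_{j+2}<\cdots<\pi_n$. You state only the weaker claim that this block ``cannot contain a descent of length three,'' which is why you don't see the shortcut. Once you know the right block is increasing and has at least two entries (because $j<n-1$), you get $\pi_{j+1}<\pi_n$, so in $\pi^R$ the value $\pi_n$ sitting at position $j$ has the strictly smaller $\pi_{j+1}$ immediately to its right---hence it is never a right-to-left minimum, full stop. No case split on the location of $1$, no appeal to Theorem~\ref{thm: 3n12}, and no ``main obstacle.'' Your Case~B argument is also built on the false premise that the entries to the \emph{left} of $n$ are increasing; they need not be (e.g.\ $21534\in\mathcal T_5(321)$ has $\pi_3=5$ and left block $21$), so ``$\pi_1=1$ is forced'' fails.

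In step~(3) you over-claim: you assert the base case $j=n-2$ forces $\pi_{n-1}=n-2$, but the lemma only constrains $\pi_k$ for $k\ge j+2$, and indeed $\pi_{j+1}$ is \emph{not} determined (both $1423$ and $2413$ lie in $\mathcal T_4(321)$ with $\pi_2=4$, $\pi_4=3$, yet $\pi_3\in\{1,2\}$). Once you have the first two bullets, the third follows by straightforward iteration: $\pi^{R^m}\in\mathcal T_{n-m}(321)$ with $(n{-}m)$ still in position $j$, so as long as $j<n-m-1$ the first bullet gives $\pi^{R^m}_{n-m}=\pi_{n-m}=n-m-1$.
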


\begin{proof}
    Let $\pi\in\T_n(321)$ with $n\geq 3$. Since $\pi$ avoids $321$ we must have $\pi_{j+1} < \pi_{j+2} < \cdots < \pi_n$. By Theorem \ref{thm: shallowRecursive}, since $\pi$ is shallow, $\pi_n$ must be either a left-to-right maximum or right-to-left minimum in $\pi^R = \pi_1 \cdots \pi_{j-1} \pi_n \pi_{j+1} \cdots \pi_{n-1}$. It cannot  be a right-to-left minimum because $j<n-1$ and $\pi^R_{j+1} = \pi_{j+1} < \pi_n = \pi^R_j$. So $\pi_n$ must be a left-to-right maximum in $\pi^R$.  If $\pi_n \not= n-1$, since it is a left-to-right maximum in $\pi^R$, $n-1$ must occur after position $j$ in $\pi^R,$ and thus in $\pi$. 
    However, this means $\pi$ contains $n(n-1)\pi_n$ as a subsequence, which is a 321 pattern. Thus $\pi_n=n-1.$ This completes the proof of the first bullet point.

Note that the previous paragraph also implies that if $\pi \in \mathcal{T}_n(321)$ with $\pi_j = n$ where $1 \leq j < n-1$ then $\pi^R \in \mathcal{T}_n(321)$. Indeed, by Theorem \ref{thm: shallowRecursive} $\pi^R$ is still shallow and we form $\pi^R$ by replacing $n$ with $n-1$, so $\pi^R$ is still $321$ avoiding since $\pi$ was. This establishes the second bullet point.

We can combine the first two bullet points to prove the third. If $\pi \in \mathcal{T}_n(321)$ has $\pi_j = n$ with $1 \leq j < n-1$ then the first and second bullet point imply that $\pi^{R^m} \in \mathcal{T}_{n-m}(321)$ with $\pi_j = n-m$ for $1 \leq m \leq n-j-1$. When $1 \leq m \leq n-j-2$ we have $j \leq n-m-2$, in this case the first bullet point shows that $\pi_{n-m} = \pi^{R^m}_{n-m} = n-m-1$. This is equivalent to $\pi_k = k-1$ for $j+2 \leq k \leq n-1$ which in combination with the first bullet point proves the third.
\end{proof}

As an example, if we have a permutation $\pi\in\mathcal{T}_{13}(321)$ with the element 13 in position $8$, then we must have that the permutation $\pi$ ends with $\pi_{10}\pi_{11}\pi_{12}\pi_{13}=9(10)(11)(12)$. Note that $\pi_9$ is not determined by this lemma.


We are now able to prove Theorem~\ref{thm: 321}.

\begin{proof}[Proof of Theorem \ref{thm: 321}]
    Let $\pi \in \mathcal{T}_n(321)$. If $\pi_n = n$, by Theorem \ref{thm: shallowRecursive}, $\pi^R$ obtained by removing $n$ will be shallow and still $321$ avoiding. Similarly, we can append $n$ to the end of any $\tau \in \mathcal{T}_{n-1}(321)$ to obtain a permutation in $\mathcal{T}_n(321)$. Therefore, there are $t_{n-1}(321)$ permutations $\pi \in \mathcal{T}_n(321)$ with $\pi_n = n$. Similarly, if $\pi_{n-1}=n$, then $\pi^R$ is obtained by replacing $n$ with $\pi_n$, which is equivalent to deleting $n$ from $\pi$. One can clearly add $n$ into the $(n-1)$st position any $\pi\in\T_{n-1}(321)$ and obtain a permutation that is still shallow and 321-avoiding. This shows that there are $t_{n-1}(321)$ permutations $\pi \in \mathcal{T}_n(321)$ with $\pi_{n-1} = n$.

    Now let us see that there are $t_{j}(321)$ permutations $\pi \in \mathcal{T}_n(321)$ with $\pi_{j} = n$ for $1 \leq j \leq n-2$. Suppose $\pi\in\mathcal{T}_n(321)$ with $\pi_j=n$ and $1\leq j\leq n-2.$ A direct consequence of Lemma \ref{lem: 321end} is that $\pi^{R^{n-j}} \in \mathcal{T}_{j}(321)$. This is actually a bijection. Indeed, given any $\tau \in \mathcal{T}_{j}(321)$, we can form a new permutation $\hat{\tau} \in S_n$ with  $\hat{\tau}_m = \tau_m$ for $1 \leq m < j$, $\hat{\tau}_{j} = n$, $\hat{\tau}_{j+1} = \tau_{j}$, and $\hat{\tau}_{k} = k-1$ for $j+2 \leq k \leq n$. For example, given the permutation $\tau = 41263857 \in \T_8(321),$ we can obtain the permutation $\pi = 4126385(13)79(10)(11)(12)\in \mathcal{T}_{13}(321).$ It is clear that the permutation $\hat\tau$ formed is 321-avoiding, and it is shallow since $\hat\tau^{R^{n-j}}=\tau$ is.
    
    As this exhausts all the possible positions of $n$, we conclude that
    \[
    t_n(321) = 2 t_{n-1}(321) + \sum_{i=1}^{n-2} t_{i}(321)
    \]
    which, together with the initial conditions, is satisfied by $F_{2n-1}.$
\end{proof}

\subsection{321-avoiding shallow permutations by descent number}

In this subsection, we consider those shallow 321-avoiding permutations with $k$ descents.

\begin{theorem}\label{thm: 321-descents}
    Let $a_{n,k}$ be the number of permutations in $\T_n(321)$ with $k$ descents and let 
    $A(x,z) = \sum_{n,k} a_{n,k} x^k z^n$.  Then,
    \[
    A(x,z) = \frac{z - 2 z^2 + x z^2 + z^3 - x z^3}{1 - 3 z + 3 z^2 - 2 x z^2 - z^3 + x z^3}.
    \]
\end{theorem}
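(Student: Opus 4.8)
The plan is to refine the case analysis from the proof of Theorem~\ref{thm: 321} so that it tracks descents, obtain a functional equation for $A(x,z)$, and solve it. Recall that in that proof we partitioned $\mathcal{T}_n(321)$ according to the position $j$ of the value $n$: the cases $\pi_n = n$ and $\pi_{n-1} = n$ each contributed a copy of $\mathcal{T}_{n-1}(321)$, and for $1 \le j \le n-2$ there was a bijection $\mathcal{T}_j(321) \to \{\pi \in \mathcal{T}_n(321) : \pi_j = n\}$ given by $\tau \mapsto \hat\tau$, where $\hat\tau$ inserts $n$ in position $j$, places $\tau_j$ in position $j+1$, and fills positions $j+2,\dots,n$ with $j, j+1,\dots, n-2$ (so $\hat\tau_k = k-1$ there). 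The first task is to compute, in each of these three cases, how the descent number of the image relates to the descent number of the preimage.

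For $\pi_n = n$ (appending a fixed point at the end): the descent count is unchanged, contributing $z \cdot A(x,z)$. For $\pi_{n-1} = n$: we insert $n$ in the penultimate slot of $\tau \in \mathcal{T}_{n-1}(321)$, i.e. $\pi = \tau_1 \cdots \tau_{n-2}\, n\, \tau_{n-1}$. This always creates exactly one new descent (at position $n-1$, since $\pi_{n-1} = n > \tau_{n-1} = \pi_n$), and destroys none, because whatever was at the old last position $\tau_{n-1}$ now simply follows $n$ and the relation $\tau_{n-2} < n$ is an ascent. So this case contributes $x z \cdot A(x,z)$. For the bijection with $\mathcal{T}_j(321)$, $1 \le j \le n-2$: in $\hat\tau$ the tail $n,\tau_j, j, j+1, \dots, n-2$ contributes a descent at position $j$ (since $n > \tau_j$) and a descent at position $j+1$ (since $\tau_j > j-1 \ge j-1$; note $\tau_j \le j$ and $\tau_j \ne j$ forces $\tau_j \le j-1$... one must check the boundary $\tau_j$ could equal... actually $\tau \in S_j$ so $\tau_j \le j$, and if $\tau_j = j$ then $\tau = \tau' \oplus 1$ and the argument needs care) — the point to verify carefully is exactly how many descents the inserted block $n\,\tau_j\,j\,(j{+}1)\cdots$ adds relative to $\tau$, accounting for whether position $j-1$ of $\tau$ was a descent. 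The expected answer, consistent with the stated generating function, is that this case contributes $\dfrac{x^2 z^2}{1 - z}\,A(x,z)$ after summing the geometric tail over $n$, i.e. each such $\pi$ has exactly two more descents than its preimage $\tau$ (one from $n > \tau_j$, one from $\tau_j$ down to the start of the increasing run $j, j+1, \dots$), \emph{except} possibly a correction when $\tau_j$ sits at a position that was already a descent of $\tau$ — reconciling this is the crux.

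Assembling the pieces gives a single linear equation $A = z A + x z A + (\text{tail-sum term})\cdot A + (\text{initial-condition polynomial})$, which one solves for $A(x,z)$ and checks against the claimed closed form; the denominator $1 - 3z + 3z^2 - 2xz^2 - z^3 + xz^3$ should emerge directly, and the numerator is pinned down by the small cases $n = 1, 2$ (giving $a_{1,0} = 1$, $a_{2,0} = a_{2,1} = 1$).

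The main obstacle I anticipate is the precise descent bookkeeping in the $1 \le j \le n-2$ family: one must argue uniformly (and handle the edge cases $\tau_j = j$, $j = 1$, and $j = n-2$) that passing from $\tau$ to $\hat\tau$ changes the descent count by exactly a fixed amount, independent of the internal structure of $\tau$; if the change is not constant, the functional equation will not be linear in $A$ with polynomial coefficients and the stated rational form would fail, so establishing this invariance — likely by a short direct inspection of positions $j-1, j, j+1$ of $\hat\tau$ versus the corresponding transition in $\tau$ — is where the real work lies. Once that lemma is in hand, the rest is routine generating-function algebra, and I would present it as: (i) the three-case descent-tracking lemma, (ii) the functional equation, (iii) solve and verify.
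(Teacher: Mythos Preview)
Your approach has a genuine gap: the descent change is \emph{not} constant in either of the non-trivial cases, so no linear functional equation in $A$ alone can arise.

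In the case $\pi_{n-1}=n$, write $\pi=\tau_1\cdots\tau_{n-2}\,n\,\tau_{n-1}$. Position $n-1$ is always a descent and position $n-2$ is always an ascent in $\pi$; but position $n-2$ in $\tau$ may or may not have been a descent. Hence $\des(\pi)=\des(\tau)+1$ when $\tau$ ends in an ascent and $\des(\pi)=\des(\tau)$ when $\tau$ ends in a descent, so the contribution is not $xzA$. The same phenomenon occurs for $1\le j\le n-2$: in $\hat\tau$ one has $\hat\tau_{j+1}=\tau_j\le j<j+1=\hat\tau_{j+2}$, so position $j+1$ is an \emph{ascent}, not a descent (your off-by-one in the tail values led you astray here). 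The only new descent is at position $j$, while any descent of $\tau$ at position $j-1$ is destroyed. Thus $\des(\hat\tau)=\des(\tau)+1-[\tau_{j-1}>\tau_j]$, again depending on whether $\tau$ ends in a descent. Your conjectured contribution $\dfrac{x^2z^2}{1-z}A$ is therefore wrong, and indeed it does not reproduce the stated denominator.

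The paper fixes exactly this by introducing an auxiliary $b_{n,k}$ counting permutations in $\T_n(321)$ with $k$ descents and $\pi_{n-1}=n$; by Lemma~\ref{lem: 321end} these are precisely the shallow $321$-avoiders that end in a descent. Stopping the $R$-iteration one step early (at $\pi^{R^{n-j-1}}\in\T_{j+1}(321)$ with its maximum in the penultimate slot) preserves the descent count exactly, giving $a_{n,k}=a_{n-1,k}+\sum_{j=k+1}^{n-1}b_{j+1,k}$, while a separate case split on $\pi_{n-2}$ yields $b_{n,k}=b_{n-1,k}+a_{n-1,k-1}-b_{n-1,k-1}$. Solving the coupled system for $A(x,z)$ and $B(x,z)$ then gives the result. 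Your framework can be salvaged along these lines by tracking ``ends in a descent'' as a second variable, which amounts to reintroducing $B$.
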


\begin{proof}
Let $a_{n,k}$ denote the number of permutations $\pi \in \T_n(321)$ with $k$ descents and $b_{n,k}$ the number of such permutations with $\pi_{n-1} = n$.

    Let $\pi \in \T_n(321)$ have $\pi_{n-1} = n$ and $k$ descents and consider the value of $\pi_{n-2}$. If $\pi_{n-2} = n-1$ then $\pi^R \in \S_{n-1}$ is still a shallow 321-avoiding permutation and has $\pi^R_{n-2} = n-1$. Since $\pi$ has $k$ descents, $\pi^R$ will also have $k$ descents. These are precisely the permutations enumerated by $b_{n-1,k}$. This construction is clearly reversible so there are $b_{n-1,k}$ permutations $\pi \in \T_n(321)$ with $k$ descents, $\pi_{n-1} = n$ and $\pi_{n-2} = n-1$.

    If $\pi_{n-2} \not= n-1$ this forces $\pi_{n-2} < \pi_{n}$, otherwise we have a $321$ consisting of $(n-1) \pi_{n-2} \pi_n$. This means $\pi^R$ will have one fewer descent, since we are removing the descent in position $n-1$. In other words, $\pi^R$ can be any  permutation $\pi' \in \T_{n-1}(321)$ with $k-1$ descents and $\pi'_{n-2} \not= n-1$. These are precisely enumerated by $a_{n-1,k-1} - b_{n-1,k-1}$. Again, this construction is reversible, so there are $a_{n-1,k-1} - b_{n-1,k-1}$ shallow 321-avoiding permutations of size $n$ with $k$ descents, $\pi_{n-1} = n$ and $\pi_{n-2} \not= n-1$.

    This implies the following recursion for $b_{n,k}$: 
    \[
b_{n,k} = b_{n-1,k} + a_{n-1,k-1} - b_{n-1,k-1}.
\]

Now, if $\pi \in \T_n(321)$  with $k$ descents and $\pi_n = n$, then $\pi^R \in \T_{n-1}(321)$ with $k$ descents. This is reversible, so there are $a_{n-1,k}$ such permutations.
    If $\pi_j = n$ with $1 \leq j \leq n-1$, then since $\pi$ is $321$ avoiding we must have $\pi_{j+1} < \pi_{j+2} < \cdots < \pi_{n}$. In order to have $k$ descents we therefore must have $k+1 \leq j \leq n-1$. We claim there are $b_{j+1,k}$ such permutations with $\pi_j = n$. This is clearly true when $j = n-1$ by construction. Now, if $k+1 \leq j \leq n-2$, by Lemma \ref{lem: 321end} since $\pi \in \mathcal{T}_n(321)$ has $\pi_{j} = n$ with $1 \leq j \leq n-2$, we have $\pi_{k} = k-1$ for $j+2 \leq k \leq n$.

    As a result, $\pi^{R^{n-j-1}} \in \S_{j+1}$ is a flat permutation with $k$ descents and $\pi^{R^{n-j-1}}_{j} = j+1$; these are precisely enumerated by $b_{j+1,k}$. Even stronger, thanks to Lemma \ref{lem: 321end}, reversing this operation produces all the $\pi \in \T_n(321)$ with $k$ descents and $\pi_j = n$. This proves the claim that such permutations are enumerated by $b_{j+1,k}$.

    Summing over all the possible positions of $n$ we find that
    \[
    a_{n,k} = a_{n-1,k} + \sum_{j=k+1}^{n-1} b_{j+1,k}. 
    \]
    Now, let $B(x,z) = \sum_{n,k} b_{n,k}x^kz^n$. These recursions together with the initial conditions imply that
    \[
    B(x,z) = z B(x,z) + xz A(x,z) - xz B(x,z) + z + z^2x.
    \]
    and
    \[
    A(x,z) = zA(x,z) + (B(x,z) - z - z^2x)(1 + z + z^2 + \cdots + z^{n-k-2}) + z.
    \]
    We need to remove $z$ and $z^2x$ because in the recursion we always have $n$ at least two greater than $k$, the number of descents. These are the only two terms in $B(x,z)$ where this does not occur. Furthermore, we can replace $1 + z + z^2 + \cdots + z^{n-k-2}$ by $\frac{1}{1-z}$ because the coefficients of $z^jx^k$ in $(B(x,z) - z - z^2x)$ for $0 \leq j \leq k+1$ are all zero. We can therefore conclude,
    \[
    A(x,z) = zA(x,z) + \frac{xzA(x,z)}{(1-z)(1-z+xz)}+ z.
    \]
    This gives us 
    \[
    A(x,z) = \frac{z}{1-z-\tfrac{xz}{(1-z)(1-z+xz)}}
    \]
    which simplifies to the desired generating function.
\end{proof}
Recall that Grassmannian permutations are those permutations with at most one descent. These permutations necessarily avoid 321 and thus we can obtain the following corollary. 
\begin{corollary}\label{thm:shallowgras}
    For $n\geq 2$, the number of shallow permutations of length $n$ that are Grassmannian is equal to $\binom{n+1}{3}+1.$
\end{corollary}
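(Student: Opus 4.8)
The plan is to read the answer directly off the bivariate generating function $A(x,z)$ of Theorem~\ref{thm: 321-descents}. A Grassmannian permutation has at most one descent, so it automatically avoids $321$; hence the shallow Grassmannian permutations of length $n$ are precisely the members of $\mathcal{T}_n(321)$ with $0$ or $1$ descents, and their number is $a_{n,0}+a_{n,1}$ in the notation of that theorem. So it suffices to extract the coefficients of $x^0$ and of $x^1$ in $A(x,z)$.

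First I would set $x=0$: the stated formula collapses to $A(0,z)=\dfrac{z-2z^2+z^3}{1-3z+3z^2-z^3}=\dfrac{z(1-z)^2}{(1-z)^3}=\dfrac{z}{1-z}$, so $a_{n,0}=1$ for every $n\ge 1$, as it must be since the identity is the unique permutation with no descents. Next I would compute $\left.\partial_x A(x,z)\right|_{x=0}=\sum_n a_{n,1}z^n$. Writing $A=N/D$ with $N(x)=z-2z^2+xz^2+z^3-xz^3$ and $D(x)=1-3z+3z^2-2xz^2-z^3+xz^3$, one has $N(0)=z(1-z)^2$, $D(0)=(1-z)^3$, $N'(0)=z^2(1-z)$, and $D'(0)=-z^2(2-z)$, so the quotient rule gives
\[
\left.\partial_x A\right|_{x=0}=\frac{z^2(1-z)^4+z^3(1-z)^2(2-z)}{(1-z)^6}=\frac{z^2\big[(1-z)^2+z(2-z)\big]}{(1-z)^4}=\frac{z^2}{(1-z)^4},
\]
where the last step uses the cancellation $(1-z)^2+z(2-z)=1$. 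Since $\dfrac{z^2}{(1-z)^4}=\sum_{n\ge 2}\binom{n+1}{3}z^n$, we conclude $a_{n,1}=\binom{n+1}{3}$ for $n\ge 2$. Adding, the number of shallow Grassmannian permutations of length $n\ge 2$ equals $a_{n,0}+a_{n,1}=1+\binom{n+1}{3}$.

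This is essentially a one-line computation once Theorem~\ref{thm: 321-descents} is in hand, so there is no real obstacle; the only points worth stating carefully are that ``Grassmannian'' implies ``$321$-avoiding'' (so the theorem applies verbatim), and the algebraic cancellation $(1-z)^2+z(2-z)=1$ that makes the numerator trivialize. If one wanted a generating-function-free argument, an alternative would be to combine the recursion $a_{n,1}=a_{n-1,1}+\sum_{j=2}^{n-1}b_{j+1,1}$ from the proof of Theorem~\ref{thm: 321-descents} with the easily checked fact $b_{n,1}=n-2$ for $n\ge 3$, and then verify $1+\binom{n+1}{3}$ satisfies the resulting recurrence with the right initial values; but the direct coefficient extraction above is the cleanest route.
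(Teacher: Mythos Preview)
Your proof is correct and follows essentially the same approach as the paper: both extract the coefficient of $x^1$ from $A(x,z)$ via $\left.\partial_x A\right|_{x=0}=\dfrac{z^2}{(1-z)^4}$ to get $a_{n,1}=\binom{n+1}{3}$, then add the single zero-descent permutation. Your write-up is simply more explicit about the quotient-rule computation and the observation that Grassmannian implies $321$-avoiding.
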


\begin{proof}
It follows from the generating function in Theorem~\ref{thm: 321-descents} that the generating function for shallow 321-avoiding permutations with exactly one descent is $\frac{\partial}{\partial x}\mid_{x=0} A(x,t) = \frac{z^2}{(1-z)^4}$ which tells us there are $\binom{n+1}{3}$ permutations in $\T_n(321)$ with 1 descent. Since there is 1 permutation of any size with zero descents and that permutation is shallow, the result follows.
\end{proof}

\subsection{321-avoiding shallow permutations with symmetry}

In this last subsection, let us consider those shallow 321-avoiding permutations that exhibit certain symmetry.
\begin{theorem}
    For $n\geq 1$, the number of involutions in $\mathcal{T}_n(321)$ is $F_{n+1}$, where $F_n$ is the $n$-th Fibonacci number. 
\end{theorem}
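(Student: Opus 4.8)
The plan is to set up a recurrence $i_n = i_{n-1} + i_{n-2}$ for the number $i_n$ of involutions in $\mathcal{T}_n(321)$, which together with $i_1 = 1$, $i_2 = 2$ yields $i_n = F_{n+1}$. The classification will be driven by the position of $n$, exactly as in the proof of Theorem~\ref{thm: 321}, but now with the additional constraint that $\pi = \pi^{-1}$. First I would handle the fixed-point case: if $\pi_n = n$, then deleting $n$ gives a bijection with involutions in $\mathcal{T}_{n-1}(321)$ (the restriction is still an involution, still avoids 321, and is still shallow by Theorem~\ref{thm: shallowRecursive}), contributing $i_{n-1}$. Next, if $\pi_{n-1} = n$, then since $\pi$ is an involution we must have $\pi_n = n-1$, so the pair $\{n-1,n\}$ forms the 2-cycle $(n-1,n)$; removing this 2-cycle leaves an involution of length $n-2$ that still avoids 321 and is still shallow (by Lemma~\ref{lem: n1}, or directly). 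This contributes $i_{n-2}$, and the construction is reversible since appending the 2-cycle $(n-1,n)$ to any involution in $\mathcal{T}_{n-2}(321)$ preserves all three properties.

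The main work is to rule out every remaining position of $n$. Suppose $\pi_j = n$ with $1 \le j \le n-2$. Then Lemma~\ref{lem: 321end} forces $\pi_n = n-1$ and $\pi_k = k-1$ for $j+2 \le k \le n$; in particular $\pi_{n-1} = n-2$, so $n-1$ appears in position $n$ while $n$ appears in position $j < n-1$. But the involution condition says $\pi_{\pi_n} = n$, i.e.\ $\pi_{n-1} = n$, which contradicts $\pi_{n-1} = n-2$ (as $n \ge 3$). Hence no involution in $\mathcal{T}_n(321)$ has $n$ in position $j \le n-2$. This disposes of all cases, so $i_n = i_{n-1} + i_{n-2}$ for $n \ge 3$.

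I expect the step I must be most careful with is verifying the reversibility and well-definedness in the $\pi_{n-1} = n$ case — specifically, that "removing the 2-cycle $(n-1,n)$" means deleting positions $n-1$ and $n$ and the values $n-1$ and $n$, and that the resulting object is genuinely an involution on $[n-2]$. This is where the interaction of the one-line operator $\pi^R$ with the cycle structure needs a clean statement: $\pi_{n-1}=n$, $\pi_n=n-1$ exactly encodes the 2-cycle, and $\pi^R$ (which replaces $n$ by $\pi_n = n-1$ and drops the last entry) produces precisely the involution obtained by deleting that 2-cycle, with shallowness guaranteed by Theorem~\ref{thm: shallowRecursive}. The rest of the argument is routine bookkeeping, and the base cases $n=1,2$ are immediate from $\mathcal{T}_1(321) = \{1\}$ and $\mathcal{T}_2(321) = \{12, 21\}$, both of whose elements are involutions.
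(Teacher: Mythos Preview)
Your approach is essentially identical to the paper's: case on the position of $n$, use Lemma~\ref{lem: 321end} to force $\pi_n = n-1$ when $j\le n-2$, then invoke the involution condition $\pi_{n-1}=n$ to obtain a contradiction, yielding $i_n = i_{n-1}+i_{n-2}$. One small slip: when $j=n-2$ the range $j+2\le k\le n$ is just $\{n\}$, so the lemma does \emph{not} give $\pi_{n-1}=n-2$ in that case --- but the contradiction survives anyway, since $\pi_{n-1}=n$ (from the involution) already clashes with $\pi_j=n$ for $j\neq n-1$, which is exactly how the paper phrases it.
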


\begin{proof}
Let $i_n(321)$ be the number of 321-avoiding shallow involutions.  First we note that if $\pi\in \mathcal{T}_n(321)$ is an involution with $\pi_j=n$, then $j=n$ or $j=n-1$. To see this, consider $j<n-1$. Then by Lemma \ref{lem: 321end} we have $\pi_n=n-1$. But then since $\pi$ is an involution, we must have $\pi_{n-1}=n$, a contradiction. Therefore $n$ is in position $n$ or $n-1$.
It is clear that there are $i_{n-1}(321)$ such permutations that have $\pi_n=n$. Since any involution with $\pi_{n-1}=n$ must also have $\pi_n=n-1$, there are $i_{n-2}(321)$ permutations in $\mathcal{T}_n(321)$ with $\pi_{n-1}=n$.
With the initial conditions $i_1(321)=1$ and $i_2(321)=2$, the result follows. 
\end{proof}

\begin{theorem}
    The number of centrosymmetric 321-avoiding shallow permutations is $F_{n+1}$ when $n$ is even and $F_{n-2}$ when $n$ is odd, where $F_n$ is the $n$-th Fibonacci number.
\end{theorem}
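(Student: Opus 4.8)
The plan is to enumerate these permutations by decomposing along direct sums, reducing to counting the sum-indecomposable ones, which will turn out to be single unimodal cycles. Write $c_n$ for the number of centrosymmetric permutations in $\mathcal{T}_n(321)$ (with $c_0=1$) and $C(x)=\sum_{n\ge 0}c_n x^n$.

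The first step is a structural reduction. Since $\pi\mapsto\pi^{rc}$ reverses direct sums, if $\pi=\alpha_1\oplus\cdots\oplus\alpha_k$ is the finest direct-sum decomposition into sum-indecomposable blocks, then $\pi=\pi^{rc}$ forces $\alpha_i^{rc}=\alpha_{k+1-i}$; hence a centrosymmetric permutation is uniquely either sum-indecomposable or of the form $\alpha\oplus\nu\oplus\alpha^{rc}$ with $\alpha$ a nonempty sum-indecomposable permutation and $\nu$ a (possibly empty) centrosymmetric permutation. Because $\oplus$ and $\pi\mapsto\pi^{rc}$ preserve shallowness (Lemma~\ref{lem: dir sum}, Proposition~\ref{prop: invRc}; and, since the lower Diaconis--Graham bound is additive over direct sums, a direct sum is shallow iff both summands are) and also preserve $321$-avoidance, $\alpha\oplus\nu\oplus\alpha^{rc}$ is shallow and $321$-avoiding exactly when $\alpha$ and $\nu$ are. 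Writing $C^{\mathrm{ind}}(x)$ for the generating function of sum-indecomposable centrosymmetric shallow $321$-avoiders and $G(x)=\sum_{m\ge 1}g_m x^m$ for that of all sum-indecomposable shallow $321$-avoiders, this yields $C(x)=C^{\mathrm{ind}}(x)+G(x^2)\,C(x)$. Since every permutation is a unique direct sum of sum-indecomposable ones, $T_{321}(x)=1/(1-G(x))$; from $t_n(321)=F_{2n-1}$ (Theorem~\ref{thm: 321}) and $\sum_{n\ge1}F_{2n-1}x^n=\frac{x(1-x)}{1-3x+x^2}$ we get $T_{321}(x)=\frac{1-2x}{1-3x+x^2}$, hence $G(x)=\frac{x(1-x)}{1-2x}$, so $g_1=1$ and $g_m=2^{m-2}$ for $m\ge 2$.

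The second step determines $C^{\mathrm{ind}}(x)$. Shallow $321$-avoiders are exactly the permutations avoiding $321$ and $3412$, equivalently those with unimodal cycle form; a sum-indecomposable permutation of length $\ge 2$ cannot confine a cycle to a proper prefix, so it is a single cycle, hence a single unimodal cycle $(1,S^{\uparrow},m,(S^{c})^{\downarrow})$ for some $S\subseteq\{2,\dots,m-1\}$. Conversely each such cycle is sum-indecomposable and shallow $321$-avoiding, and there are exactly $2^{m-2}$ of them; since this equals $g_m$, the sum-indecomposable shallow $321$-avoiders of length $m\ge 2$ are precisely the unimodal $m$-cycles. Now $\pi=\pi^{rc}$ is equivalent to $\pi$ commuting with $\delta_m=m(m-1)\cdots1$ (because $\pi^{rc}=\delta_m\pi\delta_m$), so conjugation by $\delta_m$ must rotate the cyclic word $(1,S^{\uparrow},m,(S^{c})^{\downarrow})$ by a shift $s$ with $2s\equiv 0\pmod m$. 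For odd $m\ge 3$ only $s=0$ is possible, forcing $\delta_m$ to fix all of $[m]$, which is absurd, so $c_m^{\mathrm{ind}}=0$. For even $m$ the shift must be $m/2$; tracking $1\mapsto m$ this places $m$ at position $m/2$ of the word, i.e. $|S|=m/2-1$, and then forces $S^{c}=\delta_m(S)$, which holds iff $S$ contains exactly one element of each of the $m/2-1$ complementary pairs $\{i,m+1-i\}$. Hence $c_m^{\mathrm{ind}}=2^{m/2-1}$ for even $m\ge 2$, and with $c_0^{\mathrm{ind}}=c_1^{\mathrm{ind}}=1$ this gives $C^{\mathrm{ind}}(x)=1+x+\frac{x^2}{1-2x^2}$.

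Finally, $1-G(x^2)=\frac{1-3x^2+x^4}{1-2x^2}$, so $C(x)=\bigl(1+x+\tfrac{x^2}{1-2x^2}\bigr)\cdot\frac{1-2x^2}{1-3x^2+x^4}=\frac{1+x-x^2-2x^3}{1-3x^2+x^4}$; splitting off the even and odd parts, the even part is $\frac{1-x^2}{1-3x^2+x^4}=\sum_{k\ge0}F_{2k+1}x^{2k}$ and the odd part is $x\cdot\frac{1-2x^2}{1-3x^2+x^4}=\sum_{k\ge0}F_{2k-1}x^{2k+1}$, giving $c_n=F_{n+1}$ for even $n$ and $c_n=F_{n-2}$ for odd $n$. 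I expect the main obstacle to be the second step: pinning down that the sum-indecomposable shallow $321$-avoiders are exactly the single unimodal cycles (leaning on the known characterization of shallow $321$-avoiders as those avoiding $\{321,3412\}$, equivalently having unimodal cycle form, and on the count $g_m=2^{m-2}$), and then running the $\delta_m$-conjugation argument carefully enough to see that the centrosymmetric ones are counted by choosing one element from each complementary pair; the remaining generating-function manipulations are routine.
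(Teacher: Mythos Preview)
Your proof is correct, and it takes a genuinely different route from the paper's. The paper argues by the position of $n$: using Lemma~\ref{lem: 321end} together with the operators $R$ and $L$, it shows that a centrosymmetric $\pi\in\mathcal{T}_n(321)$ with $\pi_{n-j}=n$ peels off symmetrically to a centrosymmetric element of $\mathcal{T}_{n-2j}(321)$, yielding $c_n=2c_{n-2}+c_{n-4}+c_{n-6}+\cdots$, i.e.\ $c_n=3c_{n-2}-c_{n-4}$, and then checks initial conditions. Your argument is structural rather than recursive: you decompose along direct sums, extract $G(x)=x(1-x)/(1-2x)$ from $T_{321}(x)$, identify the sum-indecomposable elements of $\mathcal{T}_m(321)$ with the $2^{m-2}$ unimodal $m$-cycles, and then use that $\pi=\pi^{rc}$ is equivalent to $\pi$ commuting with $\delta_m$ to count centrosymmetric unimodal cycles via the shift $s=m/2$ and the pair condition $S^c=\delta_m(S)$. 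The paper's approach is entirely self-contained within the tools it develops; yours leans on the $\{321,3412\}$/unimodal-cycle characterization cited in the introduction, but in exchange it explains \emph{why} the odd case collapses (no half-shift exists for odd $m$) and gives an explicit description of the centrosymmetric indecomposables. One small remark: your sentence ``a sum-indecomposable permutation of length $\ge 2$ cannot confine a cycle to a proper prefix, so it is a single cycle'' is not a valid deduction in general (e.g.\ $3412=(13)(24)$ is sum-indecomposable with two cycles); what actually carries the day is your counting argument that the $2^{m-2}$ unimodal $m$-cycles already exhaust $g_m$, so you might want to drop the misleading clause and lead with the count.
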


\begin{proof}
    Let $c_n(321)$ be the number of shallow 321-avoiding centrosymmetric permutations and let us consider the position of $n.$ If $\pi_n=n$, then since $\pi$ is centrosymmetric, $\pi_1=1$. By removing both $n$ and $1$, we are left with a centrosymmetric shallow 321-avoiding permutation of size $n-2.$ Since this is reversible for any centrosymmetric permutation in $\T_n(321)$, there are $c_{n-2}(321)$ such permutations. If $\pi_{n-1}=n$, then we must have $\pi_2=1.$ In this case $\pi^{RL}$ is the same as the permutation obtained by deleting both 1 and $n$ (scaling as appropriate). The remaining permutation is a centrosymmetric shallow 321-avoiding permutation of size $n-2.$ Again, this is reversible for any centrosymmetric permutation in $\T_n(321)$, so there are $c_{n-2}(321)$ such permutations.

    Now consider the case where $\pi_{n-j}=n$ for $n-j\leq n-2$. Then since $\pi$ is centrosymmetric and satisfies Lemma~\ref{lem: 321end}, we must have \[\pi = 23\ldots j\pi_{j} 1 \ldots n\pi_{n-j+1}(n-j+1)(n-j+2)\ldots (n-1).\] Note that $\pi^{(RL)^{j}}$ leaves us with a centrosymmetric 321-avoiding shallow permutation of length $n-2j$. Thus, we get 
    \[c_n = c_{n-2}+c_{n-2}+c_{n-4}+c_{n-6}+\ldots\]
    which is equivalent to $c_n = 3c_{n-2}-c_{n-4}$ which together with the initial conditions is satisfied by $F_{n+1}$ when $n$ is even and $F_{n-2}$ when $n$ is odd. 
\end{proof}

\begin{theorem}
    The number of persymmetric 321-avoiding shallow permutations is $F_{n+1}$, where $F_n$ is the $n$-th Fibonacci number. 
\end{theorem}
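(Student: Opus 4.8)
The plan is to show that the number $p_n$ of persymmetric $321$-avoiding shallow permutations of length $n$ satisfies $p_n = p_{n-1}+p_{n-2}$; together with the easily checked values $p_1=1$ and $p_2=2$ this gives $p_n = F_{n+1}$. Throughout I would exploit the geometric meaning of persymmetry, namely that the diagram of $\pi$ is invariant under reflection in the anti-diagonal, which carries $(j,n)$ to $(1,n+1-j)$ and $(k,1)$ to $(n,n+1-k)$. Hence if $\pi_j=n$ then $\pi_1=n+1-j$, if $\pi_k=1$ then $\pi_n=n+1-k$, and in particular $\pi_n=n$ if and only if $\pi_1=1$.

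If $\pi_n=n$, then $\pi_1=1$ by persymmetry, so $\pi = 1\oplus\sigma\oplus 1$ for a permutation $\sigma$ of length $n-2$. Deleting the two fixed points alters neither the displacement, nor the inversion number, nor the reflection length (each fixed point contributes a $1$-cycle), so by the additivity established in the proof of Lemma~\ref{lem: dir sum} the permutation $\pi$ is shallow exactly when $\sigma$ is, it avoids $321$ exactly when $\sigma$ does, and a direct check from the definition shows it is persymmetric exactly when $\sigma$ is. So this case contributes exactly $p_{n-2}$ permutations, and it remains to show that those with $\pi_n\neq n$ number exactly $p_{n-1}$. For these, write $\pi_j=n$ with $1\le j\le n-1$ and split on $j$. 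When $j=1$, $321$-avoidance forces $\pi=n12\cdots(n-1)$, the unique such permutation. When $j=n-1$ we have $\pi_1=2$, and $321$-avoidance forces the prefix ending at the position of $1$ to be increasing; combining this with persymmetry and the mesh-pattern restriction of Theorem~\ref{thm: 3n12}, applied to the iterated images $\pi^R,\pi^{RL},\dots$, narrows the possibilities. When $2\le j\le n-2$, Lemma~\ref{lem: 321end} gives $\pi_n=n-1$ and $\pi_k=k-1$ for $j+2\le k\le n$; reflecting this increasing tail in the anti-diagonal forces $\pi_{m+1}=m$ for $1\le m\le n-j-1$ together with $\pi_1=n+1-j$, which is consistent only when $j>n/2$, leaving just a short central window of positions undetermined. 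Applying Theorem~\ref{thm: 3n12} once more to the repeated $R$- and $L$-reductions of $\pi$, so as to rule out the forbidden configurations (a $2$ occurring after $n-2$, or the analogous $\underline{3}41\underline{2}$ obstruction), one checks that this window is governed by a smaller persymmetric $321$-avoiding shallow permutation; reversing a single $R$-step then matches these permutations — together with the $j=1$ and $j=n-1$ families — bijectively with the persymmetric $321$-avoiding shallow permutations of length $n-1$.

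The main obstacle is the middle range $n/2<j\le n-1$: one must verify that once the forced entries are removed, exactly a smaller persymmetric $321$-avoiding shallow permutation is left — with no extra constraint and none omitted — and that the obstructions coming from Theorem~\ref{thm: 3n12} eliminate precisely the configurations that would otherwise over- or under-count. Once the bijection of the previous paragraph is in place, $p_n=p_{n-1}+p_{n-2}$ and hence $p_n=F_{n+1}$ follow immediately.
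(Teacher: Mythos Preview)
Your proposal has a genuine gap: the bijection you claim between the persymmetric permutations in $\T_n(321)$ with $\pi_n\neq n$ and those in $\T_{n-1}(321)$ is asserted, not constructed. The case $j=n-1$ ends at ``narrows the possibilities''; the middle range ends at ``one checks that this window is governed by a smaller persymmetric $321$-avoiding shallow permutation''; and the crucial sentence ``reversing a single $R$-step then matches these permutations \dots\ bijectively with the persymmetric $321$-avoiding shallow permutations of length $n-1$'' is not backed by any explicit map. You yourself flag this in the final paragraph as the main obstacle. The difficulty is structural: persymmetry is a symmetry about the anti-diagonal, so it pairs entries in positions $k$ and $n+1-\pi_k$; a single application of $R$ changes $n$ by one and breaks that pairing, so there is no reason to expect $\pi^R$ (or any one-step reduction) of a persymmetric permutation to be persymmetric. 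Making your $p_n=p_{n-1}+p_{n-2}$ approach work would require a non-obvious matching between objects of different parities, and nothing in the write-up supplies it.

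The paper sidesteps exactly this issue by reducing two at a time. For $\pi_j=n$ with $j\le n-2$ it uses Lemma~\ref{lem: 321end} together with persymmetry to pin down both the tail $\pi_{j+2}\cdots\pi_n$ and the mirrored prefix, then applies $(RL)^{\,n-j}$ to strip off the forced entries in symmetric pairs, landing in $\T_{n-2(n-j)}(321)$ and preserving persymmetry at every stage. This yields
\[
p_n \;=\; 2p_{n-2} + p_{n-4} + p_{n-6} + \cdots,
\]
equivalently $p_n = 3p_{n-2} - p_{n-4}$, which together with the small cases gives $p_n = F_{n+1}$. If you want to salvage your argument, the natural move is to abandon the length-$(n-1)$ target and instead show directly, as the paper does, that peeling off the forced symmetric layers leaves an arbitrary smaller persymmetric element of $\T(321)$.
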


\begin{proof}
    Let $p_n(321)$ be the number of shallow 321-avoiding persymmetric permutations and let us consider the position of $n.$ If $\pi_n=n$, then since $\pi$ is persymmetric, $\pi_1=1$. By removing both $n$ and $1$, we are left with a persymmetric shallow 321-avoiding permutation of size $n-2.$ Since this is reversible for any persymmetric permutation in $\T_n(321)$, there are $p_{n-2}(321)$ such permutations. If $\pi_{n-1}=n$, then we must have $\pi_1=2.$ In this case $\pi^{RL}$ is the same as the permutation obtained by deleting both 2 and $n$ (scaling as appropriate). The remaining permutation is a persymmetric shallow 321-avoiding permutation of size $n-2.$ Again, this is reversible for any persymmetric permutation in $\T_n(321)$, so there are $p_{n-2}(321)$ such permutations.

    Now consider the case where $\pi_{n-j}=n$ for $n-j\leq n-2$. Then since $\pi$ is persymmetric, we must have \[\pi = (j+1)12\ldots (j-1)\ldots n\pi_{n-j+1}(n-j+1)(n-j+2)\ldots (n-1).\] Note that $\pi^{(RL)^{j}}$ leaves us with a persymmetric 321-avoiding shallow permutation of length $n-2j$. Thus, we get 
    \[p_n = p_{n-2}+p_{n-2}+p_{n-4}+p_{n-6}+\ldots\]
    which is equivalent to $p_n = 3p_{n-2}-p_{n-4}$ which together with the initial conditions is satisfied by the Fibonacci numbers. 
\end{proof}

\section{Future directions and concluding remarks}\label{sec:conclude}

Theorems~\ref{thm: 132} and \ref{thm: 321} imply that $t_n(132)=t_n(321)$, since both are equal to $F_{2n-1}.$ In our paper, we prove these separately and directly, but it does raise the following question.

\begin{question}
    Is there a bijective proof that $t_n(132)=t_n(321)$?
\end{question}
Based on the numerical data, we can conjecture something stronger. We conjecture that there is a bijection 
$f: \T_n(132) \to \T_n(321)$ 
with the property that $\cyc(\pi) = \cyc(f(\pi))$ and $\des(\pi) + 1=\lrmax(f(\pi)),$ where $\lrmax(\sigma)$ is the number of left-to-right maxima in a permutation $\sigma.$ It seems likely that there are more statistics that could be preserved in bijections between shallow 132-avoiding and shallow 321-avoiding permutations. 

It may even be the case that this relationship between $\T_n(132)$ and $\T_n(321)$ goes deeper and could imply more interesting things about 132-avoiding shallow permutations. For example, it is known that the 321-avoiding shallow permutations have many nice properties (see \cite{E07,PT15}, among others): they have unimodal cycles, they avoid the patterns 321 and 3412, they satisfy both the upper and lower bound of the Diaconis-Graham inequality, etc. 
\begin{question}
    Are there any interesting characterizations of 132-avoiding shallow permutations that are in the same vein as those listed above for 321-avoiding shallow permutations?
\end{question}

Another possibility for future work is related to Theorem~\ref{thm: 3n12}. In that theorem we show that shallow permutations avoid certain mesh patterns. However, this is not a complete characterization of shallow permutations. This leads us to ask the following question
\begin{question}
    Can shallow permutations be characterized completely in terms of mesh pattern avoidance? That is, is there a set of mesh patterns $S$ so that $\pi$ is shallow if and only if it avoids all patterns in $S$?
\end{question}


Finally, there are many other questions about pattern-avoiding shallow permutations that we did not consider in this paper. In many cases, it seems reasonable to count these permutations by various statistics, like number of cycles. One might also consider shallow permutations that avoid longer patterns or sets of patterns. As a more general question, one could attempt to count pattern-avoiding permutations that are not shallow, but perhaps satisfiy $I(\pi) + T(\pi) +k=D(\pi)$ for a fixed value $k$, or perhaps pattern-avoiding permutations whose cycle diagrams correspond to different knots/links (see for example, \cite{CM24, W22}).

\end{document}